\documentclass[11pt]{amsart}

\usepackage[
    paperheight=11in,
    paperwidth=8.5in,
    left=1in,
    top=1in,
    right=1in,
    bottom=1in
]{geometry}

\usepackage[
    linktocpage=true,
    linktoc=all,
    bookmarksdepth=2,
    colorlinks=true,
    linkcolor=black,
    citecolor=black,
    filecolor=black,
    urlcolor=black,
    pagebackref=false,
    pdfstartpage={1},
    pdfstartview={FitH},
    pdftitle={Normal Bundle Splitting Strata of Rational Curves in Toric Varieties},
    pdfauthor={Hikari Iwasaki},
    pdfsubject={},
    pdfcreator={},
    pdfproducer={},
    pdfkeywords={}
]{hyperref}

\makeatletter
\renewcommand{\l@subsection}{\@tocline{2}{0pt}{3pc}{5pc}{}}
\makeatother

\usepackage{graphicx}
\usepackage{amsthm,amsfonts,amssymb,amscd}
\usepackage{cancel,comment,float}
\usepackage{enumerate}
\usepackage{euscript}
\usepackage{cite}
\usepackage{tikz}
\usepackage{multicol}
\usepackage{subcaption}

\newif\ifconsolidationcolor
\consolidationcolortrue

\newif\ifprelimtrimcolor
\prelimtrimcolortrue

\newif\ifprelimdefinitioncolor
\prelimdefinitioncolortrue

\ExplSyntaxOn
\clist_new:N \g_paperbib_cited_clist
\seq_new:N \l_paperbib_entries_seq
\tl_new:N \l_paperbib_preamble_tl
\tl_new:N \l_paperbib_selected_tl
\cs_set_protected:Npn  #1
  { \clist_gput_right:Nn \g_paperbib_cited_clist {#1} }

\cs_new_protected:Npn \paperbib_select:nn #1#2
  {
    \clist_if_in:NnTF \g_paperbib_cited_clist {*}
      { \tl_put_right:Nn \l_paperbib_selected_tl {#2} }
      {
        \clist_if_in:NnT \g_paperbib_cited_clist {#1}
          { \tl_put_right:Nn \l_paperbib_selected_tl {#2} }
      }
  }
\cs_new_protected:Npn \paperbib_item:w #1#2 \q_stop
  { \paperbib_select:nn {#1} { \bibitem{#1} #2 } }
\cs_new_protected:Npn \paperbib_labeled_item:w [#1]#2#3 \q_stop
  { \paperbib_select:nn {#2} { \bibitem[#1]{#2} #3 } }
\cs_new_protected:Npn \paperbib_collect:n #1
  {
    \tl_if_head_eq_charcode:nNTF {#1} [
      { \paperbib_labeled_item:w #1 \q_stop }
      { \paperbib_item:w #1 \q_stop }
  }

\AddToHook{begindocument/end}
  {
    \cs_new_eq:NN \paperbib_begin:n \thebibliography
    \cs_new_eq:NN \paperbib_end: \endthebibliography
    \RenewDocumentEnvironment{thebibliography}{m +b}
      {
        \seq_set_split:Nnn \l_paperbib_entries_seq { \bibitem } {#2}
        \seq_pop_left:NN \l_paperbib_entries_seq \l_paperbib_preamble_tl
        \tl_clear:N \l_paperbib_selected_tl
        \seq_map_inline:Nn \l_paperbib_entries_seq
          { \paperbib_collect:n {##1} }
        \tl_if_empty:NF \l_paperbib_selected_tl
          {
            \paperbib_begin:n {#1}
            \tl_use:N \l_paperbib_preamble_tl
            \tl_use:N \l_paperbib_selected_tl
            \paperbib_end:
          }
      }
      {}
  }
\ExplSyntaxOff

\usetikzlibrary{matrix,backgrounds}

\newtheorem{theorem}{Theorem}[section]
\newtheorem{lemma}[theorem]{Lemma}
\newtheorem{lemmadefinition}[theorem]{Lemma--Definition}
\newtheorem{corollary}[theorem]{Corollary}

\newtheorem{proposition}[theorem]{Proposition}

\theoremstyle{remark}
\newtheorem{remark}[theorem]{Remark}

\theoremstyle{definition}

\newtheorem*{question}{Question}
\newtheorem{eg_no_qed}[theorem]{Example}
\newtheorem{definition}[theorem]{Definition}

\AfterEndEnvironment{ex}{\noindent\ignorespaces}

\numberwithin{equation}{section}

\theoremstyle{plain}
\newtheorem{innercustomgeneric}{\customgenericname}
\providecommand{\customgenericname}{}

\makeatletter
\newcommand{\newcustomtheorem}[2]{%
  \newenvironment{#1}[1]
  {%
   \renewcommand\customgenericname{#2}%
   \def\theinnercustomgeneric{##1}%
   \@ifnextchar[{\innercustomgeneric}{\innercustomgeneric}%
  }
  {\endinnercustomgeneric}
}
\makeatother

\newcustomtheorem{customthm}{Theorem}
\newcustomtheorem{customconj}{Conjecture}
\newcustomtheorem{customprop}{Proposition}
\newcustomtheorem{customlem}{Lemma}

\newcommand{\Z}{\mathbb{Z}}
\newcommand{\PP}{\mathbb{P}}

\renewcommand{\ker}{\operatorname{ker}}

\newcommand{\ev}{\operatorname{ev}}

\newcommand{\cA}{\mathcal{A}}

\newcommand{\cE}{\mathcal{E}}
\newcommand{\cF}{\mathcal{F}}

\newcommand{\cJ}{\mathcal{J}}

\newcommand{\cN}{\mathcal{N}}
\newcommand{\cO}{\mathcal{O}}

\newcommand{\cV}{\mathcal{V}}

\tikzset{partition/.style={fill,circle,inner sep=1pt}}

\usetikzlibrary{positioning}
\usetikzlibrary{decorations.pathreplacing}
\usetikzlibrary{matrix,arrows}
\usetikzlibrary{calc}
\usetikzlibrary{shapes,arrows}
\usetikzlibrary{snakes}

\tikzset{
    partition/.style={fill,circle,inner sep=1pt},
    part/.style={baseline=0,scale=0.5,bend left=45},
    partlabel/.style={below}
}

\tikzstyle{pnt}=[draw,ellipse,fill,inner sep=1pt]
\tikzstyle{opnt}=[ ]
\tikzstyle{pntt}=[draw,ellipse,fill,inner sep=0.5pt]
\tikzstyle{point}=[draw,ellipse,fill,inner sep=2pt]

\title{Normal Bundle Splitting Strata of Rational Curves in Toric Varieties}
\author{Hikari Iwasaki}
\date{September 30, 2026}

\begin{document}
\begin{abstract}
We study stratification by normal-bundle splitting type in families of unramified rational curves of fixed class in a smooth projective toric variety over an algebraically closed field {of arbitrary characteristic}.
Using the Cox construction, we construct an explicit two-term resolution of the relative normal bundle by direct sums of line bundles on the universal curve over the fixed-source parameter space. Then we construct morphisms of vector bundles on the parameter space, which we call relative cohomology morphisms, from which we can investigate generic splitting type and jumping loci of the family. In particular, we obtain sufficient numerical criteria for unbalancedness of the normal bundle.
We apply this method to blowups of projective space along linear subspaces, and show that the numerical criteria for unbalancedness are equivalent to those of Cela--Lian \cite{CelaLian2026}. We also obtain explicit formulas for the expected Chow classes of the jump loci.

\end{abstract}
\maketitle
\tableofcontents
\section{Introduction}
\label{sec:introduction}
\subsection{Motivation}

Throughout this paper, we work over an algebraically closed field \(k\). A rational curve \(f : \PP^1 \to X\) in a smooth projective variety \(X\) of dimension \(n\) is \emph{unramified} if the differential \(df:T_{\PP^1}\to f^*T_X\) is injective on every fiber. Its \emph{normal bundle} is then defined by
\[N_f = \operatorname{coker}( df : T_{\PP^1} \to f^*T_X).\]  By the Birkhoff-Grothendieck Theorem~\cite{Grothendieck1957}, the normal bundle \(N_f\) splits into a sum of line bundles of the form \(\bigoplus_{i=1}^{n-1}\cO_{\PP^1}(a_i)\) for some integers \(a_i\).
The collection \(\mathbf{a} = (a_1,\cdots, a_{n-1})\) of integers, listed in nondecreasing order \(a_1\le\cdots\le a_{n-1}\), is called the \emph{splitting type} of \(N_f\); in this paper, we also call it the \emph{splitting type} of \(f\). When the integers \(a_i\) satisfy \(|a_i - a_j | \leq 1\) for all \(i,j\), the vector bundle is said to be \emph{balanced}.
More generally, when the integers \(a_i\) satisfy \(|a_i - a_j | \leq m\) for all \(i,j\), the vector bundle is said to be \emph{\(m\)-balanced}.
Given a family of unramified rational curves in \(X\) with fixed curve class, we are interested in the following types of questions. 
\par\noindent\begin{minipage}{\linewidth}
\begin{question}
  Let \(U\) be the family of unramified rational curves \(f: \PP^1 \to X\) of fixed curve class.
  \begin{enumerate}
    \item\label{que:general-splitting-type} What is the splitting type of a general member of the family \(U\)? Such a splitting type is called a \emph{general splitting type} of the family.
    \item\label{que:jumping-locus} What is the locus of maps \(f\in U\) whose splitting type deviates from the general one? Such a locus is called the \emph{first jumping locus} of the family.
  \end{enumerate}
\end{question}
\end{minipage}\par\vspace{6pt}

{These questions have been studied extensively for projective space, and also in Grassmannians and in Fano complete intersections. Some of the arguments are crucially dependent on the characteristic of the underlying field \(k\).}

\subsubsection*{Projective spaces}

For $d\ge n$, the normal bundle of a general nondegenerate rational curve of degree $d$ in $\PP^n$ is balanced in characteristic different from $2$ by a theorem of Larson--Vogt \cite{LV23}. In characteristic zero, Sacchiero determined the possible splitting types \cite{Sac80}. The nonempty splitting strata of smooth rational curves in $\PP^3$ over $\mathbb{C}$ are irreducible and have the expected dimension, by a result of Eisenbud--Van de Ven \cite{EisenbudVanDeVenStrata}; in higher dimension, irreducibility and expected dimension can fail as shown by Alzati--Re and Coskun--Riedl \cite{AR17,CoskunRiedl}. Ran studied enumerative counts of normally jumping curves in incidence pencils \cite{Ran}.

\subsubsection*{Grassmannians and Fano complete intersections}

For a general rational curve in a Grassmannian, Coskun--Larson--Vogt prove that the normal bundle is $2$-balanced \cite{CLV24}.
For Fano complete intersections, Coskun--Riedl proved the following existence results~\cite{CRci}: for a general Fano complete intersection of multidegree \((d_1,\ldots,d_a)\) in \(\PP^n\), with each \(d_i\ge2\), there are rational curves with balanced normal bundle in every degree \(1\le e\le n\) if some \(d_i\ge3\). If all \(d_i=2\) and \(n\ge2a+1\), their theorem gives every degree \(1\le e\le n-1\).

\subsubsection*{Special behavior in characteristic 2}

In characteristic $2$, an unramified map $f:\PP^1\to\PP^n$ of degree $d$ has the property that every summand of $N_f^\vee\otimes\mathcal O_{\PP^1}(d)$ has even degree, by Frobenius descent, as shown by Coskun--Larson--Vogt \cite[Section~3]{CoskunLarsonVogt2022}. Together with the generic $2$-balancedness theorem, this determines the generic normal splitting in projective space \cite{CLV24}. For example, a general rational quartic in \(\PP^3\) has normal bundle \(\cO(6)\oplus\cO(8)\) in characteristic two, rather than the generic type \(\cO(7)^{\oplus2}\) in characteristic different from two.

\subsection{Methods}
In this paper, we give partial answers to Questions~\hyperref[que:general-splitting-type]{(\ref*{que:general-splitting-type})} and~\hyperref[que:jumping-locus]{(\ref*{que:jumping-locus})} in the case that  \(X\) is a smooth projective toric variety. We construct morphisms of vector bundles on the parameter space \(U\), from which one can determine the general splitting type and the jumping locus of the family.
More precisely, we use the following cohomological approach for determining the splitting types of an unramified rational curve \( f : \PP^1 \to X\). Observe that if \(n\ge2\) and the degree of the normal bundle is \(D\), then the normal bundle is balanced if and only if it is of the form
\[N_f \cong \cO_{\PP^1}(c)^{\oplus (n-1-b)} \oplus \cO_{\PP^1}(c+1)^{\oplus b}\]
where \(c = \left\lfloor \frac{D}{n-1} \right\rfloor\) and \(b\)  is the remainder in the Euclidean division \(D = (n-1)c + b\) with \( 0 \leq b < n-1\). It follows that the cohomology dimensions of twists of \(N_f\) determine its splitting type. For example, the normal bundle is balanced if and only if  \(h^0(N_f(-c-2)) = h^1(N_f(-c-1)) = 0\).

Now, suppose that the normal bundle \(N_f\) fits into the short exact sequence of vector bundles \(0 \to E_f \to F_f \to N_f \to 0\) on \(\PP^1\). The induced long exact sequence of cohomology of the twist by a line bundle \(L = \cO(-c-1)\) is
\begin{equation}
  \label{eq:LES_fiber}
  \begin{aligned}
  0 &\to H^0(E_f(-c-1)) \xrightarrow{\theta_L^0} H^0(F_f(-c-1)) \to H^0(N_f(-c-1)) \\
  &\to H^1(E_f(-c-1)) \xrightarrow{\theta_L^1} H^1(F_f(-c-1)) \to H^1(N_f(-c-1)) \to 0.
  \end{aligned}
\end{equation}
By computing the cohomology dimensions of \(E_f(-c-1)\) and \(F_f(-c-1)\) and the rank of the induced morphism \(\theta_L^1\), we can determine the cohomology dimensions of \(N_f(-c-1)\).

 In this paper, we relativize this cohomological technique to the family \(U\) of unramified rational curves in \(X\). More precisely, let \(F:\PP^1\times U\to X\) be the universal curve map, whose restriction to \(\PP^1\times\{[f]\}\) is \(f\).  Then the relative normal bundle \(\cN\) of \(F\) is given as the cokernel of the relative tangent map, fitting into the relative tangent exact sequence
\[ 0 \to T_{\PP^1 \times U / U} \xrightarrow{dF} F^*T_X \to \cN \to 0. \]
Although this sequence can be regarded as a resolution of the relative normal bundle \(\cN\)  by vector bundles, the tangent bundles are generally not split. Our first main result, Theorem~\ref{thm:two-term-resolution-N}, gives a concrete two-term resolution of \(\cN\) by direct sums of line bundles.
\begin{theorem}[Informal Statement of Theorem~\ref{thm:two-term-resolution-N}]
There is a two-term resolution of the relative normal bundle \(\cN\) of the universal curve \(F : \PP^1 \times U \to X \),
\begin{align*}
  0 \to \cE \xrightarrow{\Psi} \cF \to \cN \to 0
\end{align*}
on \(\PP^1 \times U\), where \(\cE\) and \(\cF\) are direct sums of line bundles on \(\PP^1 \times U\).
\end{theorem}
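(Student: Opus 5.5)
We will build the resolution from two standard split presentations: the generalized Euler sequence of the toric variety \(X\) and the Euler sequence of \(\PP^1\). Let \(X\) have fan \(\Sigma\) with rays \(\rho_1,\dots,\rho_r\), Cox ring \(k[x_1,\dots,x_r]\), and class group \(\operatorname{Cl}(X)\cong\Z^{r-n}\) (free, since \(X\) is smooth projective). Recall the generalized Euler sequence
\[0\to \cO_X^{\oplus(r-n)}\to \bigoplus_{i=1}^r \cO_X(D_i)\to T_X\to 0 .\]
A map \(f:\PP^1\to X\) of fixed class \(\beta\) is given, via the Cox construction, by sections \(s_i\in H^0(\PP^1,\cO(d_i))\) with \(d_i=D_i\cdot\beta\), satisfying the nondegeneracy condition that for each non-cone the corresponding \(s_i\) have no common zero. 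Up to the torus action \(G=\operatorname{Hom}(\operatorname{Cl}X,\mathbb{G}_m)\), these sections are the points of \(U\). On the fixed-source parameter space \(U\) (or a \(G\)-torsor over it, if the torus action must be rigidified) we thus get universal sections \(S_i\) of line bundles \(\cL_i=\cO_{\PP^1}(d_i)\boxtimes M_i\) on \(\PP^1\times U\), with \(F^*\cO_X(D_i)\cong\cL_i\). Pulling back the Euler sequence (exact, since it consists of bundles) gives
\[0\to\cO^{\oplus(r-n)}\to\bigoplus_i\cL_i\to F^*T_X\to0 .\]

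Next, let \(\PP^1\) have homogeneous coordinates \(u,v\), and use its Euler sequence \(0\to\cO\to\cO(1)^{\oplus2}\to T_{\PP^1}\to0\), pulled back to \(T_{\PP^1\times U/U}\). We lift \(dF\) to a map \(\cO(1)^{\oplus 2}\to\bigoplus_i\cL_i\) given by \(\partial_u S_i,\partial_v S_i\), where each component is a section of \(\cL_i(-1)\). Euler's formula \(u\partial_uS_i+v\partial_vS_i=d_iS_i\) shows that the Euler vector field maps into the image of \(\cO^{r-n}\), namely into the \(G\)-orbit direction given by the \(\operatorname{Cl}\)-degrees \((d_i)\). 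Hence we may set
\[\cE=\cO(1)^{\oplus2}\oplus\cO^{\oplus(r-n)},\qquad \cF=\bigoplus_{i=1}^r\cL_i,\]
and define \(\Psi\) to be the combined map. This yields a surjection \(\cF\to F^*T_X\to\cN\). By the snake lemma applied to the two Euler sequences, the kernel of this surjection is an extension of \(T_{\PP^1\times U/U}\) by \(\cO^{r-n}\). The problem is that \(\cE\) has rank \(r-n+2\), whereas that kernel has rank \(r-n+1\). We correct for this by taking the kernel of the combined map \(\cO\oplus\cO^{r-n}\to\cO^{r-n}\), \((t,w)\mapsto w-t\,\delta\) with \(\delta=(d_i)\)-class, which identifies the Euler-field copy of \(\cO\) with its image. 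Concretely, we replace \(\cE\) by \(\cO(1)^{\oplus 2}\oplus\cO^{\oplus(r-n)}/\cO\). This quotient is still free, of rank \(r-n-1\) plus the two copies of \(\cO(1)\), because \(\delta\) extends to a basis whenever the class is primitive. If it is not primitive, we instead keep \(\cO^{r-n}\) and drop one \(\cO\) in the \(\PP^1\) Euler sequence by a splitting determined by \(\delta\neq 0\).

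The main obstacle is to verify that \(\Psi\) is injective on every fiber, so that \(\cN\) is locally free and the sequence is a resolution by split bundles. Fiberwise injectivity combines two facts. First, unramifiedness of \(f\) means the image of \(T_{\PP^1}\) in \(f^*T_X\) is a subbundle. Second, the torus \(G\) acts freely on the nondegenerate locus of the Cox space because \(X\) is smooth, so \(\cO^{r-n}\to\bigoplus\cL_i\) is fiberwise injective. The nine-lemma then gives the exactness \(0\to\cE\to\cF\to\cN\to0\), and the fiberwise injectivity ensures the sequence restricts to \(0\to E_f\to F_f\to N_f\to 0\) on each fiber. A delicate point is the identification of the line bundles \(M_i\) on \(U\), which requires a careful choice of the parameter space, for instance working \(G\)-equivariantly on the Cox-space cover.
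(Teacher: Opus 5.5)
Your route is the same as the paper's. You pull back the toric Euler sequence of \(X\) and the Euler sequence of \(\PP^1\), and lift \(dF\) by the Jacobian. Euler's identity then gives \(J(u,v)=\epsilon_F(\bar\beta)\), where \(\bar\beta=\beta\otimes1\in V=\operatorname{Pic}(X)^\vee\otimes_{\Z}k\). Finally you quotient \((V\otimes\cO)\oplus\cO(1,0)^{\oplus2}\) by the antidiagonal copy of \(\cO\) embedded by \((\bar\beta;-u,-v)\).

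The snake-lemma exactness argument is fine; it only needs \(\epsilon_F\) and \(dF\) injective with locally free cokernels. Your worry about rigidifying the \(G\)-action is unnecessary: \(\operatorname{Pic}(\PP^1\times U)\cong\Z\oplus\operatorname{Pic}(U)\) already gives \(F^*\cO_X(D_\rho)\cong\cO(d_\rho,\zeta_\rho)\) on \(\PP^1\times U\).

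The gap is in the last step, showing that the quotient \(\cE\) is a direct sum of line bundles. You split into cases by whether \(\beta\) is primitive in the lattice, and that is the wrong dichotomy. These are \(\cO\)-modules over a field, and any nonzero \(\bar\beta\in V\) extends to a \(k\)-basis. Choosing a complement \(V'\) to \(k\bar\beta\) therefore gives \(\cE\cong(V'\otimes\cO)\oplus\cO(1,0)^{\oplus2}\) whenever \(\bar\beta\neq0\), whatever the divisibility of \(\beta\). Plane conics in characteristic \(0\), for instance, are handled this way, yet your argument sends them to the second branch.

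That second branch is not a valid construction. The Euler sequence of \(\PP^1\) does not split, so there is no way to ``drop one \(\cO\)'' from \(\cO(1,0)^{\oplus2}\). Moreover, when \(\bar\beta\neq0\) you cannot keep all of \(V\otimes\cO\): since \(J(u,v)=\epsilon_F(\bar\beta)\neq0\), the map \(J\) does not descend to \(\cO(1,0)^{\oplus2}/\cO\).

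What is genuinely missing is the case \(\bar\beta=0\), which the theorem must cover because the paper works in arbitrary characteristic. It occurs exactly when \(\operatorname{char}k=p>0\) and \(\beta\in p\operatorname{Pic}(X)^\vee\). An example is the class \((4,2)\) on \(\operatorname{Bl}_{\PP^1}\PP^3\) in characteristic \(2\), which the paper uses.

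In this case your standing assumption \(\delta\neq0\) fails. Euler's identity reads \(u\,\partial_uS_i+v\,\partial_vS_i=d_iS_i=0\), so the Euler copy of \(\cO\) lies in \(\ker J\) itself. The kernel of \(\epsilon_F+J\) is then \(0\oplus\operatorname{im}(u,v)\), and
\[\cE\cong(V\otimes\cO)\oplus\bigl(\cO(1,0)^{\oplus2}/\cO\bigr)\cong\cO^{\oplus m}\oplus\cO(2,0).\]
This has a different splitting type from \(\cO^{\oplus(m-1)}\oplus\cO(1,0)^{\oplus2}\); equivalently, the extension of \(\cO(2,0)\) by \(V\otimes\cO\) splits precisely when \(\bar\beta=0\).

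To close the gap, replace primitivity by the condition \(\bar\beta\neq0\) in \(V\) and treat \(\bar\beta=0\) as a separate case, as the paper does.
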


Using the resolution, we construct morphisms \(\Theta_L^1\) of vector bundles on \(U\) for a line bundle \(L\) on \(\PP^1\), called \emph{relative cohomology morphisms}, whose fiber at \([f]\) is the morphism \(\theta_L^1\) in the fiberwise cohomology sequence for \(N_f\) given in \eqref{eq:LES_fiber}. Because \(\Psi\) is a morphism of direct sums of line bundles, \(\Theta_L^1\) is also a morphism of direct sums of line bundles.

\begin{lemmadefinition}[Informal Statement of Lemma--Definition~\ref{thm:universal-pushforward}]
Denote the first and second projections from \(\PP^1 \times U\) to the corresponding factors by \(\pi_1\) and \(\pi_2\), respectively. Let \(L\) be a line bundle on \(\PP^1\). 
The \emph{relative cohomology morphism} associated with \(L\) is given by 
 \[\Theta_L^1=R^1\pi_{2*}(\Psi\otimes\mathrm{id}_{\pi_1^*L}) :R^1\pi_{2*}(\cE\otimes\pi_1^*L) \to 
    R^1\pi_{2*}(\cF\otimes\pi_1^*L).  \] 
     \(\Theta_L^1\) is a morphism of direct sums of line bundles on \(U\), which fit into the following exact sequence
\begin{align*}
    \pi_{2*}(\cN\otimes \pi_1^*L) 
  &\to R^1\pi_{2*}(\cE\otimes \pi_1^*L) \xrightarrow{\Theta_L^1} R^1\pi_{2*}(\cF\otimes \pi_1^*L) \to R^1\pi_{2*}(\cN\otimes \pi_1^*L) \to  0.
\end{align*}
\end{lemmadefinition}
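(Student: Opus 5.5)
We take as given the two-term resolution $0\to\cE\xrightarrow{\Psi}\cF\to\cN\to0$ of Theorem~\ref{thm:two-term-resolution-N}, with $\cE$ and $\cF$ direct sums of line bundles on $\PP^1\times U$. Since $\PP^1\times U$ is a product, each summand should have the form $\pi_1^*\cO_{\PP^1}(a)\otimes\pi_2^*M$ with $M$ a line bundle on $U$. In the Cox construction the twists on $U$ come from the torus-equivariant scaling of the homogeneous coordinates; if some summands are not literally of this form, we first pass to this product description, which is the natural one on the fixed-source parameter space. The plan is to tensor the resolution with the flat line bundle $\pi_1^*L$, apply $R\pi_{2*}$, and read off both the exact sequence and the split structure from the projection formula together with base change.

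First, tensoring with $\pi_1^*L$ preserves exactness, because $\pi_1^*L$ is locally free. The long exact sequence of higher direct images for $\pi_2$, which has relative dimension $1$, is
\[
\cdots\to\pi_{2*}(\cN\otimes\pi_1^*L)\to R^1\pi_{2*}(\cE\otimes\pi_1^*L)\xrightarrow{\Theta_L^1}R^1\pi_{2*}(\cF\otimes\pi_1^*L)\to R^1\pi_{2*}(\cN\otimes\pi_1^*L)\to R^2\pi_{2*}(\cE\otimes\pi_1^*L).
\]
The last term vanishes because the fibers are curves. This gives the stated exact sequence, and it identifies the connecting map with $R^1\pi_{2*}(\Psi\otimes\mathrm{id})$, so $\Theta_L^1$ is well defined.

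Second, for a summand $\pi_1^*\cO(a)\otimes\pi_2^*M$, the projection formula together with flat base change for the projection $\PP^1\times U\to U$ gives
\[
R^1\pi_{2*}\bigl(\pi_1^*(\cO(a)\otimes L)\otimes\pi_2^*M\bigr)\cong H^1(\PP^1,\cO(a)\otimes L)\otimes_k M\cong M^{\oplus h^1(\cO(a)\otimes L)}.
\]
Hence both $R^1\pi_{2*}(\cE\otimes\pi_1^*L)$ and $R^1\pi_{2*}(\cF\otimes\pi_1^*L)$ are direct sums of line bundles on $U$, and in particular they are locally free. Since $R^1\pi_{2*}$ is additive, $\Theta_L^1$ decomposes into blocks. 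The block between the summands indexed by $(a,M)$ and $(b,M')$ is the map induced by the corresponding component of $\Psi$. That component is a section of $\pi_1^*\cO(b-a)\otimes\pi_2^*(M'\otimes M^{-1})$, and by the Künneth formula it is a sum of terms $s\otimes t$. On $R^1$, the block therefore acts as the multiplication map $H^1(\cO(a)\otimes L)\otimes H^0(\cO(b-a))\to H^1(\cO(b)\otimes L)$, tensored with $t$. In other words, $\Theta_L^1$ is an explicit matrix of sections of line bundles on $U$.

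Third, we check that the fiber of $\Theta_L^1$ at $[f]$ is $\theta_L^1$. The functor $R^1\pi_{2*}$ is right exact and top-dimensional on the fibers, so cohomology and base change holds for $R^1$: the formation of $R^1\pi_{2*}$ commutes with arbitrary base change. Restricting to $\PP^1\times\{[f]\}$ sends $\Psi$ to the fiberwise map $E_f\to F_f$, which gives the compatibility.

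I expect the main obstacle to be the first step, namely making sure that the summands of $\cE$ and $\cF$ really split as exterior tensor products $\pi_1^*\cO(a)\otimes\pi_2^*M$, since the rest is formal. I would try to get this from how Theorem~\ref{thm:two-term-resolution-N} is built. The Euler-type sequence for $X$ involves $\bigoplus_\rho F^*\cO(D_\rho)$. Each $F^*\cO(D_\rho)$ is a product bundle on $\PP^1\times U$, because the universal map is given by homogeneous forms whose coefficients are coordinates on $U$, and these forms are sections of $\pi_1^*\cO(d_\rho)\otimes\pi_2^*(\text{tautological bundle})$.
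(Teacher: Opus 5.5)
Your proposal is correct and takes essentially the paper's route. You twist by $\pi_1^*L$, take the long exact sequence of $R^i\pi_{2*}$ (with $R^2\pi_{2*}=0$ giving the final surjection), and apply the projection formula and flat base change to summands of the form $\cO(a,\zeta)$; note, however, that the product form you flag as the main obstacle is already part of Theorem~\ref{thm:two-term-resolution-N} ($\cF=\bigoplus_\rho\cO(d_\rho,\zeta_\rho)$ and $\cE$ pulled back from $\PP^1$), and that $\Theta_L^1$ is the map induced by $\Psi$, not the connecting homomorphism, which is the map $\pi_{2*}(\cN\otimes\pi_1^*L)\to R^1\pi_{2*}(\cE\otimes\pi_1^*L)$.
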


  Using the relative cohomology morphisms, we can answer Questions (1) and (2) above.
Section~\ref{subsec:toric-unbalanced-generalization} addresses Question~\hyperref[que:general-splitting-type]{(\ref*{que:general-splitting-type})} on general splitting type by deriving sufficient numerical conditions for the general normal bundle to be unbalanced. First, Lemma~\ref{lem:rank-deficiency-implies-unbalancedness} shows that failure of maximal rank of \(\Theta_L^1\) at a point of \(U\) forces the corresponding normal bundle to be unbalanced. Theorem~\ref{thm:toric-unbalanced-criterion} then gives an explicit sufficient condition forcing this failure of maximal rank at every point of \(U\). 

\begin{theorem}
With the notation of Section~\ref{subsec:toric-unbalanced-generalization}, if there exist \(c\ge2\) and a subspace \(W\subseteq V\) such that
\[
\delta_W(c)>\max\{\delta_{\mathrm{exp}}(c),0\},
\]
then the morphism \(\Theta_{L_c}^1\) fails to have maximal rank at every point of \(U\).
\end{theorem}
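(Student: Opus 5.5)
The plan is to reduce the statement to a fiberwise kernel-dimension estimate, uniform in \([f]\in U\). By Lemma--Definition~\ref{thm:universal-pushforward}, \(\Theta^1_{L_c}\) is a morphism of direct sums of line bundles on \(U\) whose fiber at \([f]\) is
\[\theta^1_{L_c}\colon H^1(E_f\otimes L_c)\to H^1(F_f\otimes L_c),\]
and both ranks are independent of \([f]\). With \(\delta_{\mathrm{exp}}(c)\) read as (rank of source) \(-\) (rank of target), maximal rank at \([f]\) is equivalent to
\[\dim\ker\theta^1_{L_c}=\max\{\delta_{\mathrm{exp}}(c),0\}.\]
This holds because maximal rank means rank equal to the minimum of the two ranks, and rank\(=\)source\(-\)kernel. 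So it suffices to show that for every \([f]\in U\),
\[\dim\ker\theta^1_{L_c}\ge\delta_W(c).\]
The hypothesis \(\delta_W(c)>\max\{\delta_{\mathrm{exp}}(c),0\}\) then gives the conclusion at every point.

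To get this lower bound, I would use the explicit shape of \(\Psi\) from Theorem~\ref{thm:two-term-resolution-N}. Its matrix entries are built from the Cox coordinates \(x_\rho\circ f\), their derivatives, and linear data coming from \(V\) (the lattice/character space of the Cox quotient). For the chosen subspace \(W\subseteq V\), I expect to isolate:
\begin{itemize}
\item a direct summand \(\cE_W\subseteq\cE\), the summands indexed by \(W\);
\item the sub-sum \(\cF_W\subseteq\cF\) of those summands of \(\cF\) on which \(\Psi|_{\cE_W}\) has possibly nonzero components.
\end{itemize}
The point is that \(\Psi(\cE_W)\subseteq\cF_W\) holds identically on \(\PP^1\times U\), as a consequence of the combinatorics of the fan and of which pairings with \(W\) vanish, and not only for special \(f\). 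Applying \(R^1\pi_{2*}(-\otimes\pi_1^*L_c)\) then gives a commutative square in which \(\Theta^1_{L_c}\) restricted to \(R^1\pi_{2*}(\cE_W\otimes\pi_1^*L_c)\) factors through \(R^1\pi_{2*}(\cF_W\otimes\pi_1^*L_c)\).

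Fiberwise, the kernel of \(\theta^1_{L_c}\) therefore contains the kernel of the restricted map \(H^1(E_{W,f}\otimes L_c)\to H^1(F_{W,f}\otimes L_c)\), whose dimension is at least \(h^1(E_{W,f}\otimes L_c)-h^1(F_{W,f}\otimes L_c)\). Since all bundles involved are sums of line bundles of known degrees on \(\PP^1\), these \(h^1\) are computed by the formula \(h^1(\cO(a))=\max\{-a-1,0\}\). The resulting difference should be exactly the quantity \(\delta_W(c)\) defined in Section~\ref{subsec:toric-unbalanced-generalization}. The condition \(c\ge2\) presumably enters here: it makes the twists negative enough that the relevant \(h^1\) are given by the linear formula, or that the \(T_{\PP^1}\)-summand contributes in a controlled way.

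The main obstacle is the second step: verifying that \(\Psi\) genuinely maps \(\cE_W\) into \(\cF_W\) for every point of \(U\), and matching the bookkeeping exactly with the definition of \(\delta_W(c)\). I would first check this on the Euler-type part of the resolution, where the entries are \(\langle m,v_\rho\rangle x_\rho\). I would then handle the extra summand coming from \(T_{\PP^1}\) (the \(dF\) part), possibly by showing that its image lies in \(\cF_W\) or that its \(H^1\) vanishes after twisting by \(L_c\) for \(c\ge2\).
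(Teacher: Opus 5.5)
\textbf{There is a genuine gap at the step you flag as the main obstacle.} Your reduction to the fiberwise bound \(\dim\ker(\Theta^1_{L_c}|_q)\ge\delta_W(c)\) is the paper's strategy. So is the idea of factoring a \(W\)-part of \(\Theta^1_{L_c}\) through the summands of \(\cF\) indexed by \(I(W)\).

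The problem is that \(\cE\) in general has no direct summand ``indexed by \(W\)''. The \(W\)-part is the image of \(j\colon W\otimes\cO\to\cE\) induced by the presentation \(\cE=\operatorname{coker}(\bar\beta;-s,-t)\). When \(0\ne\bar\beta\in W\), this image is not a direct summand, because \(j(\bar\beta\otimes1)\) is the class of \((0;s,t)\), which spans the non-split sub-line-bundle \(\cO\hookrightarrow\cO(1,0)^{\oplus2}\) (cf.\ Remark~\ref{rmk:intrinsic-interpretation-E-as-extension}). This happens e.g.\ for \(W=V\), or for \(W_\pi\) with class \((5,2)\) on \(X_{r,s}\) in characteristic \(3\).

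For a mere subbundle, your containment of kernels needs the map \(W\otimes H^1(\PP^1,L_c)\to H^1(\PP^1,\cE_q\otimes L_c)\) to be injective. The proposal never proves this, and neither fallback you list supplies it:
\begin{itemize}
\item The \(\cO(1,0)\)-summands have \(h^1=c-1\ne0\) after twisting, so their \(H^1\) does not vanish.
\item The Jacobian columns need not land in \(\bigoplus_{\rho\in I(W)}\cO(d_\rho,\zeta_\rho)\). For \(\rho\notin I(W)\), the condition \(\bar\beta\in W\) only gives \(d_\rho=0\) in \(k\). In characteristic \(p\) this allows a nonconstant \(u_\rho\) with \(\partial_su_\rho\ne0\), e.g.\ the cubic \(v_j=s^2t\) in the example above. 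In characteristic zero it does force \(u_\rho\) to be constant, so this fallback would work there, but the theorem is characteristic-free.
\end{itemize}

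\textbf{The missing idea} is to use the exact sequence \(0\to V\otimes\cO\xrightarrow{j}\cE\to\cO(2,0)\to0\). After twisting by \(\pi_1^*L_c\) and pushing forward, the vanishing \(H^0(\PP^1,\cO(1-c))=0\) for \(c\ge2\) makes \(Y_V=R^1\pi_{2*}(j\otimes\mathrm{id})\) injective on every fiber, and hence \(Y_W\) as well. This is the essential use of \(c\ge2\) in the argument.

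One notational correction: here \(V=\operatorname{Pic}(X)^\vee\otimes k\), and the Euler map \(\epsilon_F\) has \(\rho\)-entry \(\lambda([D_\rho])u_\rho\) on \(\lambda\in W\), not \(\langle m,v_\rho\rangle x_\rho\).

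Since \(\Psi\circ j=\epsilon_F\), one gets \(\Theta^1_{L_c}\circ Y_W=A_W\), where \(A_W=R^1\pi_{2*}(\epsilon_F|_W\otimes\mathrm{id})\). The map \(A_W\) factors through \(\bigoplus_{\rho\in I(W)}H^1(\PP^1,\cO(d_\rho-c-1))\otimes\cO_U(\zeta_\rho)\). Therefore \(Y_W\) embeds \(\ker(A_W|_q)\), whose dimension is at least \(\delta_W(c)\), into \(\ker(\Theta^1_{L_c}|_q)\), which is the bound you need.
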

For varieties of Picard rank two satisfying the additional hypotheses of Proposition~\ref{prop:optimal-Wpi-projective-bundle}, testing this condition over subspaces of \(\operatorname{Pic}(X)^\vee\otimes_{\Z}k\) reduces to testing a single subspace \(W_{\pi}\) determined by a well-behaved toric morphism. 

Next, to address Question~\hyperref[que:jumping-locus]{(\ref*{que:jumping-locus})}, Section~\ref{sec:jumping_loci} introduces the cohomology-jump loci \(J_L^j\), where the rank of \(\Theta_L^1\) is at least \(j\) below the generic rank. These closed loci are defined by minors of explicit matrices in the Cox coefficients. Lemma~\ref{lem:jumping-generic-rank} recovers each prescribed splitting stratum from these rank conditions as the twist varies, and expresses the first jumping locus as the union of the first cohomology-jump loci. Because the comparison uses the actual generic rank, this description applies even when the general normal bundle is unbalanced. Proposition~\ref{prop:cohomology-jump-class} then applies Porteous's formula to compute the expected codimensions and Chow classes of the individual cohomology-jump loci from the explicit source and target bundles; these classes equal the fundamental classes when the loci have the expected codimension.



Finally, as an application, we apply these results to rational curves of fixed class in blowups of projective space along linear subspaces.  In this case, the parameter space \(U\) is given as a smooth quasiprojective variety, whose compactification is a projective bundle on a projective space, and we obtain an explicit construction of the morphisms \(\Theta_L^i\). In particular, for the family of quartic unramified rational curves in the blowup of \(\PP^3\) along a line meeting the exceptional divisor with multiplicity two, we show that the first jumping locus is nonempty and determine its divisor class in the Chow ring of the parameter space \(U\).

This paper is structured as follows. Section 2 introduces the preliminary construction for the parameter space \(U\) of unramified rational curves in a smooth projective toric variety  following the Cox coordinate construction in \cite{Cox1995, CoxFunctor1995} and the compactification technique introduced by Givental \cite[Section~5]{Givental1998} and Morrison--Plesser \cite[Section~3.7]{MorrisonPlesser1995}.  In Section 3, we construct morphisms on the parameter space, called the \emph{relative cohomology morphisms},  from which we can analyze answers to the two questions. We start with Theorem \ref{thm:two-term-resolution-N} in Section 3.1, in which we construct the two-term resolution of the relative normal bundle of the universal curve over the parameter space using Cox coordinates for toric varieties. In Section 3.2, we use such a resolution to construct the relative cohomology morphisms, by twisting the resolution and pushing forward to the parameter space. Sections 4 and 5 answer each of the two questions using the relative cohomology morphisms.  In Section 4, we deduce sufficient numerical criteria for unbalancedness of the general normal bundle. Section 5 provides a method to answer the complementary question on jumping loci. Section 6 applies results from Sections 4 and 5 to blowups of projective space along linear subspaces. In particular, in Proposition~\ref{prop:blowup-cela-lian-equivalence}, we show that the sufficient numerical condition derived in Theorem~\ref{thm:toric-unbalanced-criterion} is sharp under the hypotheses stated in Section~\ref{subsec:blowup-unbalanced-criterion}: it is equivalent to the unbalancedness condition given by Cela and Lian \cite{CelaLian2026}. We also derive the Chern class of the jumping loci when they have expected codimension. {We present an example of quartic curves with intersection multiplicity two with the exceptional divisor in the blowup of \(\mathbb{P}^3\) along a line; we show that the first jumping locus has expected codimension, and compute the class of the resulting jumping divisor in every characteristic.}




\subsection*{AI Disclosure}
The author used GPT-6 Sol/Astra as an assisting tool for learning background material, searching and reviewing the literature, and revising the manuscript. The model suggested the point \(q_0\) in Proposition~\ref{prop:small-smooth-witness} as a witness to nonemptiness of the first jumping locus; the author independently verified the example and supplied its proof. The general mathematical arguments and the writing of the paper are produced by the author, and the author is fully responsible for the correctness of the paper.

\subsection*{Acknowledgments}
I am grateful to Izzet Coskun and my advisor Ravi Vakil for helpful discussions and feedback on the draft. 

\section{Preliminaries}
\label{sec:parameter-space-preliminaries}

In this section, we briefly review the construction of Cox coordinates for a smooth projective toric variety \(X\), following \cite[Sections~1--2]{Cox1995}. We then use Cox's description of morphisms~\cite{CoxFunctor1995} to construct the parameter space \(U\) of unramified morphisms \(\PP^1\to X\) with fixed integral curve class \(\beta\) and fixed source. The ambient quasimap compactification is described in \cite[Section~7.2, especially Example~7.2.3]{CiocanFontanineKim2010}. This construction provides the setting for our results on the relative normal bundle and its jumping loci.


Let \(X = X_\Sigma\) be a smooth projective toric variety associated with a fan \(\Sigma\) in \(\mathbb{R}^n\). Write \(\Sigma(j)\) for the set of \(j\)-dimensional cones and \(\sigma(1)\) for the rays of a cone \(\sigma\). In particular, \(\Sigma(1)\) is the set of rays, and \(\Sigma(n)\) is the set of maximal cones.
Recall that a ray \(\rho \in \Sigma(1)\) of the fan \(\Sigma\) corresponds to a torus-invariant divisor \(D_\rho\) in \(X\). 
The \emph{Cox ring} is the \(\operatorname{Pic}(X)\)-graded \(k\)-algebra
\[
  R_\Sigma=k[x_\rho\mid\rho\in\Sigma(1)],
  \quad \text{ where } \deg(x_\rho)=[D_\rho]\in\operatorname{Pic}(X).
\]
Its variables \(x_\rho\) are called \emph{Cox coordinates}; we also denote by \(x_\rho\) the corresponding canonical section of \(\cO_X(D_\rho)\).
The \emph{Cox torus} is defined by \(G = \operatorname{Hom}(\operatorname{Pic}(X), \mathbb{G}_m)\).
Define the open subscheme \(U_\Sigma\subset\operatorname{Spec}(R_\Sigma)\) by
\[
  U_\Sigma
  =\bigcup_{\sigma\in\Sigma(n)}
    \left\{ p\in\operatorname{Spec}(R_\Sigma)
  \mid
      x_\rho(p)\ne0
      \text{ for every }\rho\notin\sigma(1)
    \right\},
\]
where the Cox torus acts on \(U_\Sigma\) by 
\[g\cdot x_\rho= g([D_\rho])x_\rho \quad  \text{ for all } g \in G.\]
The geometric quotient of \(U_\Sigma\) by \(G\) then retrieves the toric variety: \(X_\Sigma\cong U_\Sigma/G\).

Next, we describe the parameter space \(U\) of unramified morphisms \(f\colon\PP^1\to X\) with fixed integral curve class \(\beta\) and fixed parametrization of the source, following Cox's homogeneous-polynomial description of morphisms.
For each ray \(\rho \in \Sigma(1)\), the pullback \(f^*x_\rho\) is a homogeneous polynomial \(u_\rho\) of degree \(d_\rho := D_\rho \cdot \beta\) in the homogeneous coordinates \(s,t\) of the source \(\PP^1\). 
The collection of \emph{Cox tuples}  \((u_\rho)_\rho\) of such polynomials forms a vector space \(\cA=\bigoplus_{\rho\in\Sigma(1)} V_\rho\), where \(V_\rho = H^0\!\left(\PP^1,\cO_{\PP^1}(d_\rho)\right).\)
Define the open Cox-stable subscheme \(\cA^\circ\) by
\[
  \cA^\circ
  =\bigcup_{\sigma\in\Sigma(n)}
    \left\{
    (u_\rho)_\rho\in\cA
      \,\middle|\,
      u_\rho\not\equiv0
      \text{ for every }\rho\notin\sigma(1)
    \right\},
\]
where the Cox torus acts on \(\cA\) blockwise by \[g\cdot(u_\rho)_\rho=\bigl(g([D_\rho])u_\rho\bigr)_\rho \quad \text{ for all }g \in G.\] The geometric quotient \(Q:=\cA^\circ/G\) is a projective compactification of the fixed-source maps of class \(\beta\), called the space of \emph{stable toric quasimaps} with fixed parametrization on the source.

The locus \(Q^{\mathrm{map}}\) of honest morphisms is an open subscheme of \(Q\), defined as follows. First, we let \(\cA^{\mathrm{map}}\subset \cA^\circ\) be the Cox-stable open subscheme of \(\cA^\circ\) consisting of Cox tuples which induce base-point-free morphisms \(f : \PP^1 \to X\), given by
\[\cA^{\mathrm{map}}= \left\{ (u_\rho)_\rho \in \cA^\circ \mid \text{for every }p \in \PP^1(k), \text{ there is }\sigma \in \Sigma(n) \text{ such that }u_\rho(p) \neq 0 \text{ for all }\rho \notin \sigma(1)\right\}.\]
The locus \(\cA^{\mathrm{map}}\) is \(G\)-invariant and open in \(\cA^\circ\). Indeed, the pairs \((p,(u_\rho)_\rho)\) for which \(p\) is a base point form a closed subset of \(\PP^1\times\cA^\circ\), defined by the vanishing of \(\prod_{\rho\notin\sigma(1)}u_\rho(p)\) for every \(\sigma\in\Sigma(n)\). Its image under the proper projection to \(\cA^\circ\) is closed, and its complement is \(\cA^{\mathrm{map}}\).
It follows that the quotient \(Q^{\mathrm{map}}=\cA^{\mathrm{map}}/G\) is an open subscheme of \(Q\), and is the parameter space of honest morphisms with curve class \(\beta\).
By Cox's functorial description of morphisms, \(Q^{\mathrm{map}}\) represents the Hom functor of morphisms \(\PP^1\to X\) of class \(\beta\) with fixed source. The \emph{universal curve} map
\[
  F^{\mathrm{map}}:\PP^1\times Q^{\mathrm{map}}\longrightarrow X
\]
is the universal morphism corresponding to the identity of \(Q^{\mathrm{map}}\) under this representability.

Finally, the space of unramified morphisms is defined by imposing the unramified condition on the parameter space \(Q^{\mathrm{map}}\).
\begin{definition}
\label{def:unramified-parameter-space}
The parameter space \(U\) of unramified morphisms \(\PP^1\to X\) of class \(\beta\) with fixed source is
\[
  U
  =\left\{ 
  q\in Q^{\mathrm{map}}
  \,\middle|\, 
  (df_q)_p\colon T_{\PP^1,p}\longrightarrow T_{X,f_q(p)} 
  \text{ is injective for every }p\in\PP^1_{k}
  \right\}, 
\]
where \(f_q : \PP^1 \to X\) is the restriction of \(F^{\mathrm{map}}\) to the fiber over \(q \in Q^{\mathrm{map}}\).
\end{definition}
The locus \(U\) is open in \(Q^{\mathrm{map}}\): the locus where the relative differential of \(F^{\mathrm{map}}\) fails to be injective on fibers is closed in \(\PP^1\times Q^{\mathrm{map}}\), and its image under the proper projection to \(Q^{\mathrm{map}}\) is the complement of \(U\). 
In Remark~\ref{rmk:unramified-locus-rank-condition}, we give an alternative construction of \(U\) using the relative normal sheaf of \(F^{\mathrm{map}}\).  Throughout the remainder of the paper, we assume \(U\) is nonempty. In particular, \(U\) is smooth and integral, being a nonempty open subscheme of the smooth integral Cox quotient \(Q=\cA^\circ/G\). We denote by \(F : \PP^1 \times U \to X\) the restriction of the universal curve map \(F^{\mathrm{map}}\) to \(\PP^1 \times U\).


\begin{remark}
All constructions in this section are valid in arbitrary characteristic of the underlying field. In particular, for morphisms \(\PP^1\to X\), unramifiedness is equivalent to fiberwise injectivity of the differential in every characteristic. The proper-projection argument above therefore establishes the openness of \(U\subset Q^{\mathrm{map}}\) without any restriction on the characteristic~\cite[Definition~5.4.1 and Theorem~5.5.3(i)]{EGAII}.
\end{remark}

\section{Relative cohomology morphism}
\label{sec:two-term-resolution-and-pushforward}

  In this section, we reduce the problem of identifying normal bundle splitting types and their jumping loci in \(U\) to explicit rank computations in Cox coordinates of the smooth projective toric variety. The first construction, Theorem~\ref{thm:two-term-resolution-N}, is a two-term resolution of the relative normal bundle of the universal curve by direct sums of line bundles, which holds valid in arbitrary characteristic. Twisting this resolution and pushing forward to \(U\) then yields the \emph{relative cohomology morphisms} \(\Theta_L^1\), defined in Lemma--Definition~\ref{thm:universal-pushforward}, which can be  represented in monomial bases by matrices whose entries are linear in the Cox coefficients. Their ranks determine cohomology dimensions of twists of the general normal bundle, and their minors give equations for the loci where this dimension increases. By varying the twists, these cohomology dimensions recover the normal splitting type. Using the relative cohomology morphisms, we approach Questions~\hyperref[que:general-splitting-type]{(\ref*{que:general-splitting-type})} and~\hyperref[que:jumping-locus]{(\ref*{que:jumping-locus})} in Section~\ref{subsec:toric-unbalanced-generalization} and Section~\ref{sec:jumping_loci}.

\subsection{Two-term resolution of the relative normal bundle}
\label{sec:normal-bundle-universal-curve}
We first resolve the relative normal bundle by direct sums of line bundles. For convenience, let \(\pi_i\) denote the projection from \(\PP^1 \times U\) to the \(i\)-th factor and write \(\cO(a,\zeta) := \pi_1^*\cO_{\PP^1}(a) \otimes \pi_2^*\cO_{U}(\zeta) \) for  \(\zeta\in\operatorname{Pic}(U)\). 
The \emph{relative normal bundle} of \(F\) is
\[
  \cN:=\operatorname{coker}\bigl(dF:
  T_{\PP^1\times U/U}\longrightarrow F^*T_X\bigr).
\]
For \(q\in U\), write \(N_q:=\cN|_{\PP^1\times\{q\}}\cong N_{f_q}\) for the normal bundle of the corresponding curve. Then \(\cN\) has the following two-term resolution by direct sums of line bundles.

\begin{theorem}
  \label{thm:two-term-resolution-N} Let \(X\) be a smooth projective toric variety of dimension \(n\) over \(k\) associated with a fan \(\Sigma\), with Picard \(\Z\)-rank \(m\), and let \(U\) be the space parameterizing unramified rational curves of fixed integral curve class \(\beta\). Write \(F:\PP^1\times U\to X\) for the universal curve and set
\[
 V=\operatorname{Pic}(X)^\vee\otimes_{\mathbb Z}k,
 \qquad \bar\beta=\beta\otimes1\in V,
 \qquad \cF=\bigoplus_{\rho\in\Sigma(1)}\cO(d_\rho,\zeta_\rho),
\]
where \(\operatorname{Pic}(X)^\vee=\operatorname{Hom}_{\mathbb Z}(\operatorname{Pic}(X),\mathbb Z)\), \(d_\rho=D_\rho \cdot \beta \in \Z\), and \(\zeta_\rho\) is the unique divisor class satisfying \(F^*\cO_X(D_\rho)\cong\cO(d_\rho,\zeta_\rho)\). Define the vector bundle
\[
 \cE=\operatorname{coker}\!\left(
 \cO\xrightarrow{(\bar\beta;-s,-t)}
 (V\otimes_{k}\cO)\oplus\cO(1,0)^{\oplus2}\right),
\]
which decomposes as
\[
\cE\cong
\begin{cases}
 \cO^{\oplus(m-1)}\oplus\cO(1,0)^{\oplus2},&\bar\beta\ne0,\\
 \cO^{\oplus m}\oplus\cO(2,0),&\bar\beta=0.
\end{cases}
\]
 Then there is a short exact sequence on \(\PP^1\times U\),
\begin{equation}
\label{eq:two-term-resolution-N}
 0\longrightarrow\cE\xrightarrow{\Psi}\cF
 \longrightarrow\cN\longrightarrow0.
\end{equation}
\end{theorem}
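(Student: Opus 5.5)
The plan is to combine the generalized Euler sequence on \(X\) with the relative Euler sequence of \(\PP^1\), and then take a pushout. The generalized Euler sequence for a smooth complete toric variety, valid in every characteristic (Cox, Batyrev), reads
\[
0\to \operatorname{Pic}(X)^\vee\otimes_{\Z}\cO_X\to\bigoplus_{\rho\in\Sigma(1)}\cO_X(D_\rho)\to T_X\to0 .
\]
The first map sends \(\phi\) to \((\phi([D_\rho])x_\rho)_\rho\). Pulling this back along \(F\) gives
\[
0\to V\otimes\cO\xrightarrow{\alpha}\cF\to F^*T_X\to 0
\]
on \(\PP^1\times U\). Here the identification \(F^*\cO_X(D_\rho)\cong\cO(d_\rho,\zeta_\rho)\) comes from the Cox description: over \(\cA^{\mathrm{map}}\) the pullback of \(x_\rho\) is the tautological polynomial \(u_\rho\), and descending along the \(G\)-torsor produces the twist \(\zeta_\rho\). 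Pulling back preserves exactness because the sequence consists of vector bundles.

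The next step is to lift \(dF\). The relative tangent bundle \(T_{\PP^1\times U/U}\) has the Euler presentation
\[
0\to\cO\xrightarrow{(s,t)}\cO(1,0)^{\oplus2}\to T_{\PP^1\times U/U}\to0 .
\]
I would define \(\tilde\Psi:\cO(1,0)^{\oplus2}\to\cF\) by sending \((a,b)\) to \((a\,\partial_s u_\rho+b\,\partial_t u_\rho)_\rho\). This lifts \(dF\), since on Cox coordinates the differential of \(f\) is computed by differentiating the homogeneous polynomials \(u_\rho\). By Euler's formula \(s\partial_s u_\rho+t\partial_t u_\rho=d_\rho u_\rho\), the composite \(\cO\to\cO(1,0)^{\oplus 2}\to\cF\) equals \(\alpha(\bar\beta)\), because \(\bar\beta([D_\rho])=d_\rho\). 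This is the step I expect to require the most care. The point is that the equality is an exact identity in every characteristic, including when \(d_\rho\equiv 0\) in \(k\), and that the maps glue correctly over \(U\) despite the \(G\)-twists. Both follow from \(G\)-equivariance of the formulas on \(\PP^1\times\cA^{\mathrm{map}}\) and descent.

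Now set \(\Psi=(\alpha,-\tilde\Psi)\), or equivalently \((\alpha,\tilde\Psi)\) after the sign conventions used in the map \((\bar\beta;-s,-t)\). This map \((V\otimes\cO)\oplus\cO(1,0)^{\oplus2}\to\cF\) kills the image of \(\cO\), so it factors through \(\cE\). The induced map \(\cE\to\cN\) is then exact by a diagram chase, or equivalently by the snake lemma. Concretely, \(\cE\) is the pushout of \(V\otimes\cO\) and \(\cO(1,0)^{\oplus2}\) over \(\cO\), which gives an extension
\[
0\to V\otimes\cO\to\cE\to T_{\PP^1\times U/U}\to0 .
\]
Its map to \(0\to V\otimes\cO\to\cF\to F^*T_X\to0\) is the identity on \(V\otimes\cO\) and is \(dF\) on the quotient. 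Since \(dF\) is injective with cokernel \(\cN\) (this is where unramifiedness is used), the snake lemma shows that \(\Psi\) is injective with cokernel \(\cN\). Injectivity as a map of sheaves suffices here, since \(\cN\) is locally free on \(U\).

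Finally, the splitting of \(\cE\) is handled in two cases. If \(\bar\beta\ne0\), choose a basis of \(V\) containing \(\bar\beta\). The map \(\cO\to V\otimes\cO\) is then a split injection onto a trivial summand, so the cokernel is \(\cO^{\oplus(m-1)}\oplus\cO(1,0)^{\oplus2}\). If \(\bar\beta=0\), the cokernel is \(V\otimes\cO\oplus\operatorname{coker}(\cO\xrightarrow{(s,t)}\cO(1,0)^{\oplus2})\). The second term is \(\pi_1^*T_{\PP^1}=\cO(2,0)\), by the Euler sequence or because it is a line bundle of determinant \(\cO(2,0)\).
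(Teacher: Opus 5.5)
Your proposal is correct and follows essentially the same route as the paper. Both arguments pull back the toric Euler sequence, lift $dF$ by the Jacobian $(\partial_s u_\rho,\partial_t u_\rho)_\rho$, use Euler's identity to match $\alpha(\bar\beta)$, and pass to the quotient by $(\bar\beta;-s,-t)$; the paper phrases the last step as a mapping-cone four-term sequence, whereas your pushout-plus-snake-lemma argument is the extension viewpoint of Remark~\ref{rmk:intrinsic-interpretation-E-as-extension}. One fix: with the convention $1\mapsto(\bar\beta;-s,-t)$, the map that kills this image is $(\alpha,\tilde\Psi)$, not $(\alpha,-\tilde\Psi)$, so drop the hedge.
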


\begin{proof}
By the definition of unramifiedness, on \(\PP^1 \times U\), we have the normal exact sequence
\[ 0 \to {T_{\PP^1 \times U/U}} \to F ^* T_X \to \cN \to 0  \quad \text{ on }\PP^1 \times U.
\] 
We use the toric Euler sequences for \(\PP^1\) and \(X\) following the construction in \cite[Theorem~8.1.6 and Exercise~8.1.7]{CoxLittleSchenck2011} to relate the two tangent bundles. The toric Euler sequence for \(X\) is given by
\[0 \to V\otimes_k\cO_{X} \xrightarrow{\epsilon} \bigoplus_{\rho \in \Sigma(1)} \cO_X(D_\rho) \to T_{X} \to 0 ,\]
{where the first map is \(\epsilon(\lambda)=(\lambda([D_\rho])x_\rho)_\rho\), where \(x_\rho\) is the Cox section on \(X\) associated with the ray \(\rho\). } Pulling back the toric Euler sequence via \(F\), we therefore obtain
\[
  0 \to V\otimes_k\cO_{\PP^1\times U}
  \xrightarrow{\epsilon_F} \cF\longrightarrow F^*T_X\longrightarrow0,
  \quad \text{ where }
  \cF:=\bigoplus_{\rho\in\Sigma(1)}
  \cO(d_\rho, \zeta_\rho).
\]
On the other hand, we have a similar toric Euler sequence for \(\PP^1\), which gives a short exact sequence on \(\PP^1 \times U\) under the pullback by \(\pi_1\):
\begin{equation}
\label{eq:pulled-back-p1-euler}
0 \to \cO_{\PP^1 \times U} \xrightarrow{(s,t)} \cO(1,0)^{\oplus2} \to T_{\PP^1 \times U/U}\to 0  \quad \text{ on }\PP^1 \times U,
\end{equation}
where \(s,t\) are the standard homogeneous coordinates of \(\PP^1\). Now, we show that there exists a commutative diagram of the two exact sequences of the following form:
\begin{equation}
  \label{diag:two-row-euler-sequences}
  \begin{array}{ccccccccc}
    0&\longrightarrow&\cO_{\PP^1\times U}
     &\xrightarrow{(s,t)}&\cO(1,0)^{\oplus2}
     &\longrightarrow&T_{\PP^1\times U/U}&\longrightarrow&0\\
    &&\big\downarrow\alpha&&\big\downarrow J&&\big\downarrow dF\\
    0&\longrightarrow&V\otimes_k\cO_{\PP^1\times U}
     &\xrightarrow{\epsilon_F}&\cF
     &\longrightarrow&F^*T_X&\longrightarrow&0.
  \end{array}
\end{equation}
First, we define the middle vertical map \(J\) by the Jacobian on the Cox coordinates as
\[
  J=(D_s,D_t),\quad \text{ where }
  D_s=(\partial_su_\rho)_\rho,\quad
  D_t=(\partial_tu_\rho)_\rho.
\]
Then the right square commutes by the construction of \(J\). 
To make the left square commute, we note that by Euler's identity for homogeneous polynomials, we have \[sD_s+tD_t
=\bigl(s\partial_su_\rho+t\partial_tu_\rho\bigr)_\rho
=(d_\rho u_\rho)_\rho = \epsilon_F(\beta).\] In particular, setting \(\alpha(1)=\bar\beta\) makes diagram~\eqref{diag:two-row-euler-sequences} commute, as desired.

Next, by the mapping-cone construction and its associated long exact cohomology sequence \cite[Chapter~IV, Exercise~3]{CartanEilenberg1956}, we obtain the following long exact sequence:
\[
  \begin{gathered}
    0\longrightarrow
    H^{-1}\!\left(\operatorname{Cone}(dF)\right)
    \longrightarrow T_{\PP^1\times U/U}
    \xrightarrow{\ dF\ }F^*T_X
    \longrightarrow H^0\!\left(\operatorname{Cone}(dF)\right)
    \longrightarrow0.
    \begin{aligned}
    \end{aligned}
  \end{gathered}
\]
On the other hand, by the defining relative normal exact sequence for \(F\), we obtain \(H^{-1}(\operatorname{Cone}(dF)) = 0\) and \(H^{0}(\operatorname{Cone}(dF)) = \cN\). The mapping cone therefore yields a four-term exact sequence on \(\PP^1 \times U\):
\[
    0\longrightarrow\cO_{\PP^1\times U}
    \xrightarrow{ \ (\bar\beta; -s , -t) \ }
    \left(V\otimes_k\cO_{\PP^1\times U}\right)
      \oplus\cO(1,0)^{\oplus2}
    \xrightarrow{\ \widetilde{\Psi} = \epsilon_F + J \ }
    \cF
    \longrightarrow \cN\longrightarrow0.
\]
The first cone map \((\bar\beta;-s,-t)\) is an injection of subbundles because \(s\) and \(t\) have no common zero. When \(\bar\beta\ne0\), a nonzero constant component in \(V\) can be cancelled, so that the cokernel \(\cE\) of the first cone map is \(\cO^{\oplus(m-1)}\oplus\cO(1,0)^{\oplus2}\). When \(\bar\beta=0\), the cokernel \(\cE\) of the first cone map is isomorphic to \(\cO_{\PP^1\times U}^{\oplus m} \oplus \left(\cO(1,0)^{\oplus 2}/\operatorname{im}(-s,-t)\right)\). The Euler sequence identifies the last factor with \(\cO(2,0)\), leaving \( \cE \cong \cO^{\oplus m}\oplus\cO(2,0)\). In particular, taking the quotient by the first cone map gives \eqref{eq:two-term-resolution-N}, where unramifiedness identifies the remaining cokernel with \(\cN\).
\end{proof}

\begin{remark}
\label{rmk:intrinsic-interpretation-E-as-extension}
More intrinsically, the cokernel \(\cE\) is the extension of \(\cO(2,0)\) by \(V \otimes_k \cO\) associated with \(\bar\beta : \cO \xrightarrow{} V\otimes_k\cO\). Indeed, pushing out the exact sequence~\eqref{eq:pulled-back-p1-euler} along \(\cO \xrightarrow{\bar\beta} V\otimes_k\cO\) gives an exact sequence 
\[0 \to V \otimes_k \cO \xrightarrow{j} \cE' \to \cO(2,0) \to 0,\] 
where the middle term is isomorphic to
\[
\bigl((V\otimes\mathcal O)\oplus\mathcal O(1,0)^{\oplus2}\bigr)
/\operatorname{im}(\bar\beta;-s,-t) = \cE.
\]
Under this perspective, it is immediate that the extension splits if and only if \(\overline{\beta} = 0\).

\end{remark}

\begin{remark}
\label{rmk:matrix-Psi}
Choosing bases in the decomposition of \(\cE\), the morphism \(\Psi\) has the following matrix.
If \(\bar\beta\ne0\), choose a basis \(\lambda_1,\ldots,\lambda_{m-1}\) of a complement to \(k\bar\beta\) in \(V\). Then
\[
[\Psi]=\begin{bmatrix}
\lambda_1([D_\rho])u_\rho&\cdots&
\lambda_{m-1}([D_\rho])u_\rho&\partial_su_\rho&\partial_tu_\rho
\end{bmatrix}_\rho.
\]
If \(\bar\beta=0\), choose a basis \(\lambda_1,\ldots,\lambda_m\) of \(V\). Using the identification \(\cO(1,0)^{\oplus2}/\cO\cong\cO(2,0)\) induced by \((a,b)\mapsto ta-sb\), the Jacobian descends to a morphism \(\cO(2,0)\to\cF\). Its components are the unique sections \(H_\rho\in H^0(\PP^1\times U,\cO(d_\rho-2,\zeta_\rho))\) satisfying
\[\partial_su_\rho=tH_\rho,\quad\partial_tu_\rho=-sH_\rho.
\]
The matrix of \(\Psi\) is then
\[
[\Psi]=\begin{bmatrix}
\lambda_1([D_\rho])u_\rho&\cdots&\lambda_m([D_\rho])u_\rho&H_\rho
\end{bmatrix}_\rho.\]
\end{remark}

\begin{remark}
\label{rem:relative-normal-complex} The construction extends naturally as we enlarge the parameter space from \(U\) to \( Q^{\mathrm{map}}\) and  \(Q.\)
On \(\PP^1\times U\), the universal map is unramified, so the cokernel of its relative differential is the relative normal bundle \(\cN\).
On the larger honest-map locus \(Q^{\mathrm{map}}\), the universal evaluation map \(F^{\mathrm{map}}\) still exists, and we define the relative normal \emph{sheaf}
\[
\cN^{\mathrm{map}}
:=
\operatorname{coker}\!\left(
T_{\PP^1\times Q^{\mathrm{map}}/Q^{\mathrm{map}}}
\xrightarrow{dF^{\mathrm{map}}}
(F^{\mathrm{map}})^*T_X
\right).
\]
This coherent sheaf need not be locally free.

To extend the normal bundle to \(Q\), we first note that the universal curve map is not defined away from \(Q^{\mathrm{map}}\); the normal sheaf defined via the differential is ill-defined. The bundles \(\cE,\cF\) and the morphism \(\Psi\), however, extend to \(\PP^1\times Q\). We therefore define the relative normal \emph{complex} by
\[
\cN_Q^\bullet
:=
\left[
\cE_Q
\xrightarrow{\Psi_Q}
\cF_Q
\right],
\]
with terms in degrees \(-1\) and \(0\). On  \(Q^{\mathrm{map}}\), the mapping-cone construction gives a canonical quasi-isomorphism
\[
\left.\cN_Q^\bullet\right|_{\PP^1\times Q^{\mathrm{map}}}
\simeq
\left[
T_{\PP^1\times Q^{\mathrm{map}}/Q^{\mathrm{map}}}
\xrightarrow{dF^{\mathrm{map}}}
(F^{\mathrm{map}})^*T_X
\right].
\]
On \(\PP^1\times U\), the complex is quasi-isomorphic to the relative normal bundle \(\cN\), placed in degree zero.
\end{remark}

\begin{remark}
\label{rmk:unramified-locus-rank-condition}
Having described the relative normal sheaf on \(\PP^1 \times Q^{\mathrm{map}}\) as the cokernel of \(\Psi_{Q^{\mathrm{map}}}\), we can characterize the locus \(U\) in \(Q^{\mathrm{map}}\) using the notation introduced in the proof of Theorem~\ref{thm:two-term-resolution-N} as follows. For \(q\in Q^{\mathrm{map}}\) and \(x\in\PP^1_{k}\), let \(\Psi_q(x)\) denote the fiber of \(\Psi_{Q^{\mathrm{map}}}\) at \((x,q)\).  Since \(q\) represents an honest morphism, the fiber of \(\epsilon_{F^{\mathrm{map}}}\) at \((x,q)\) has full rank \(m\). By construction, the Jacobian \(J_{(x,q)}\) maps \(\operatorname{im}\bigl((s,t)_x\bigr)\) into \(\operatorname{im}(\epsilon_{F^{\mathrm{map}},(x,q)})\) and the induced morphism on the cokernel is exactly \( (df_q)_x\colon T_{\PP^1,x}\longrightarrow T_{X,f_q(x)}. \) Now, note that the image of \((\bar\beta; -s,-t)\) is contained in the kernel of \(\widetilde{\Psi}\). This implies that when we truncate the four-term sequence to obtain \(\Psi\) from \(\widetilde{\Psi}\), the image of \(\Psi\) remains the same as that of \(\widetilde{\Psi}\). It follows that \(\operatorname{rank}\Psi_q(x)=m+\operatorname{rank}(df_q)_x.\)  Because \(T_{\PP^1,x}\) is one-dimensional, \(f_q\) is unramified at \(x\) if and only if \(\Psi_q(x)\) has full rank, rank \(m + 1\).  Consequently,
  \[
    U
    =\left\{
      q\in Q^{\mathrm{map}}
      \;\middle|\;
      \operatorname{rank}\Psi_q(x)\geq m+1
      \text{ for every }x\in\PP^1_{k}
    \right\}.
  \]
By the lower semicontinuity of the rank of a morphism of vector bundles, the locus where the rank of \(\Psi_q(x)\) is at most \( m\) is closed in \(\PP^1\times Q^{\mathrm{map}}\). Since the projection to \(Q^{\mathrm{map}}\) is proper, its image is closed, and hence \(U\) is open.
\end{remark}


\subsection{Relative cohomology morphism}
\label{sec:normal-bundle-pushforward-sequence}

Using the two-term resolution of the relative normal bundle of \(\PP^1 \times U\), we construct morphisms of vector bundles on \(U\), called \emph{relative cohomology morphisms}, whose determinantal loci correspond to jumping loci of the normal bundle, defined in the following  Lemma--Definition~\ref{thm:universal-pushforward}. In particular, the relative cohomology morphisms are shown to be morphisms of direct sums of line bundles on \(U\),  which can be used to compute the determinantal ideals of the induced cohomology maps.

For the remainder of the paper, \(c\) denotes an integer, and we set \(L_c = \cO_{\PP^1}(-c-1)\).

\begin{lemmadefinition}
\label{thm:universal-pushforward}
The \emph{\(i\)-th relative cohomology morphism} associated with a line bundle \(L\) on \(\PP^1\) is given by
 \[\Theta_{L}^i=R^i\pi_{2*}(\Psi\otimes\mathrm{id}_{\pi_1^*L}) :R^i\pi_{2*}(\cE\otimes\pi_1^*L) \to
    R^i\pi_{2*}(\cF\otimes\pi_1^*L).  \]
    When \(i= 1\), we also simply call \(\Theta_{L}^1\) the \emph{relative cohomology morphism} associated with \(L\).
The morphisms fit into the exact sequence
  \begin{equation}
    \label{eq:universal-pushforward-LES}
    \begin{aligned}
    0\longrightarrow{}
    (\pi_2)_*(\cE\otimes\pi_1^*L)
    &\xrightarrow{\Theta^0_{L}}
    (\pi_2)_*(\cF\otimes\pi_1^*L)
    \longrightarrow
    (\pi_2)_*({\cN}\otimes\pi_1^*L)\\
    \longrightarrow{}
    R^1(\pi_2)_*(\cE\otimes\pi_1^*L)
    &\xrightarrow{\ \Theta^1_{L}\ }
    R^1(\pi_2)_*(\cF\otimes\pi_1^*L)
    \longrightarrow
    R^1(\pi_2)_*({\cN}\otimes\pi_1^*L)
    \longrightarrow0.
    \end{aligned}
  \end{equation}
    When \(L = L_c\) for some integer \(c\), then
     \(\Theta_{L_c}^i\) is a morphism of direct sums of line bundles, with source and target  given by
\begin{equation}
\label{eq:pushforward-bases}
\begin{aligned}
R^i\pi_{2*}(\cE\otimes\pi_1^*L_c)
&\cong
\begin{cases}
\bigl(H^i(\cO(-c-1))^{\oplus(m-1)}\oplus
H^i(\cO(-c))^{\oplus2}\bigr)\otimes_{k}\cO_U,
&\text{if }\bar\beta\ne0,\\
\bigl(H^i(\cO(-c-1))^{\oplus m}\oplus
H^i(\cO(1-c))\bigr)\otimes_{k}\cO_U,
&\text{if }\bar\beta=0,
\end{cases}\\
R^i(\pi_2)_*(\cF\otimes\pi_1^*L_c)
&\cong\bigoplus_{\rho\in\Sigma(1)}
H^i(\cO(d_\rho-c-1))\otimes_{k}\cO_U(\zeta_\rho),
\end{aligned}
\end{equation}
where all cohomology groups are on \(\PP^1\).
\end{lemmadefinition}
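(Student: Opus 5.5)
The plan is to twist the short exact sequence \eqref{eq:two-term-resolution-N} of Theorem~\ref{thm:two-term-resolution-N} by \(\pi_1^*L\) and apply the higher direct images \(R^i\pi_{2*}\). Since \(\pi_1^*L\) is a line bundle, tensoring preserves exactness, giving
\[
0\to\cE\otimes\pi_1^*L\xrightarrow{\Psi\otimes\mathrm{id}}\cF\otimes\pi_1^*L\to\cN\otimes\pi_1^*L\to0
\]
on \(\PP^1\times U\). The long exact sequence of \(R^i\pi_{2*}\) then produces \eqref{eq:universal-pushforward-LES}, with the maps between the \(\cE\)- and \(\cF\)-terms equal to \(\Theta^i_L\) by definition. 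The sequence starts with \(0\) because \(R^{-1}\pi_{2*}=0\). It ends with \(0\) because \(\pi_2\) has one-dimensional fibers, so \(R^2\pi_{2*}\) vanishes on coherent sheaves; this follows, for instance, from computing with the two-set Čech cover \(\{s\ne0\},\{t\ne0\}\) of \(\PP^1\) pulled back to \(\PP^1\times U\).

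For the identification \eqref{eq:pushforward-bases}, I would first decompose \(\cE\) and \(\cF\) into line bundles of the form \(\cO(a,\zeta)=\pi_1^*\cO(a)\otimes\pi_2^*\cO_U(\zeta)\), using the splitting of \(\cE\) from Theorem~\ref{thm:two-term-resolution-N}. Since \(R^i\pi_{2*}\) commutes with finite direct sums, it suffices to compute \(R^i\pi_{2*}\) of each summand. For each summand I would use the projection formula,
\[
R^i\pi_{2*}\bigl(\pi_1^*\cO(a)\otimes\pi_2^*\cO_U(\zeta)\bigr)\cong R^i\pi_{2*}\bigl(\pi_1^*\cO(a)\bigr)\otimes\cO_U(\zeta),
\]
followed by flat base change along \(U\to\operatorname{Spec}k\), which gives \(R^i\pi_{2*}\pi_1^*\cO(a)\cong H^i(\PP^1,\cO(a))\otimes_k\cO_U\). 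Flat base change is characteristic-free and applies here because \(\pi_2\) is the base change of the proper flat map \(\PP^1\to\operatorname{Spec}k\). Applying this with \(a=-c-1,-c,1-c\) for the \(\cE\)-summands, all of which have \(\zeta=0\), and with \(a=d_\rho-c-1\) for the \(\cF\)-summands, gives exactly the listed sheaves. Each is a free or twisted-free sheaf, hence a direct sum of line bundles on \(U\).

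The remaining point, and the only one that needs care, is that these identifications are canonical enough that \(\Theta^i_{L_c}\) is genuinely a morphism between the stated direct sums of line bundles. Any \(\cO\)-linear map between coherent sheaves that are direct sums of line bundles is automatically a morphism of such sheaves. It is also worth recording, for later use, what \(\Theta^i_{L_c}\) is in monomial bases. I would do this via the Čech description: the entries of \(\Psi\) are multiplication by sections \(\lambda([D_\rho])u_\rho\), \(\partial_su_\rho\), \(\partial_tu_\rho\) (or \(H_\rho\)), as in Remark~\ref{rmk:matrix-Psi}. Their effect on \(\check H^1\) of the Čech cover is multiplication of Laurent monomials, so the matrix entries are linear in the Cox coefficients, which are sections of \(\cO_U(\zeta_\rho)\). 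I expect this compatibility of base change with the Čech computation to be the main bookkeeping obstacle, though it is routine.
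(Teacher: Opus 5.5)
Your proposal is correct and follows essentially the same route as the paper: twist the resolution \eqref{eq:two-term-resolution-N} by \(\pi_1^*L\), take the long exact sequence of \(R^i\pi_{2*}\), and identify each line-bundle summand's pushforward via the projection formula and flat base change along \(U\to\operatorname{Spec}k\). Your added justifications are details the paper leaves implicit: left exactness at the start, vanishing of \(R^2\pi_{2*}\) via the two-set \v{C}ech cover, and the monomial description of \(\Theta^i_{L_c}\).
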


\begin{proof}


After twisting by the pullback to \(\PP^1 \times U\) of the line bundle \(L\), the pushforward to the parameter space \(U\) induces a six-term exact sequence \eqref{eq:universal-pushforward-LES}. Observe that \(\cE\) is the pullback under \(\pi_1\) of a vector bundle on \(\PP^1\). For any vector bundle \(\cV\) on \(\PP^1\), flat base change from \(k\) gives
\[
  R^i(\pi_2)_*(\pi_1^*\cV)
  \cong H^i(\PP^1,\cV)\otimes_{k}\cO_{U},
  \qquad i=0,1.
\]
Together with the projection formula, it follows that the higher pushforwards of \(\cE \otimes \pi_1^*L_c\) and \(\cF \otimes \pi_1^*L_c\) are given as in \eqref{eq:pushforward-bases}.
In particular, the maps \(\Theta^i_{L}\) are morphisms of direct sums of line bundles for all \(i\).
\end{proof}





\section{Sufficient numerical criteria for unbalancedness}
\label{subsec:toric-unbalanced-generalization}

Using the relative cohomology morphisms, we derive numerical conditions for generic unbalancedness of normal bundles for smooth projective toric varieties, answering Question~\hyperref[que:general-splitting-type]{(\ref*{que:general-splitting-type})}. First, we recall the relation between unbalancedness of normal bundles of the rational curves in the family and failure of maximal rank of the morphism \(\Theta_{L_c}^1\) on \(U\).
\begin{lemma}
\label{lem:rank-deficiency-implies-unbalancedness}
Let \(q \in U\). If there exists $c$ such that $\Theta_{L_c}^1|_q$ fails to have maximal rank, then $N_{q}$ is unbalanced.
\end{lemma}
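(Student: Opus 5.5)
We argue by contraposition: assuming \(N_q\) is balanced, we show that \(\theta^1_{L_c}:=\Theta^1_{L_c}|_q\) has maximal rank for every integer \(c\). The first step is to identify the fiber of \(\Theta^1_{L_c}\) at \(q\) with the map \(H^1(E_q(-c-1))\to H^1(F_q(-c-1))\) in the fiberwise sequence \eqref{eq:LES_fiber}. Here \(E_q=\cE|_{\PP^1\times\{q\}}\) and \(F_q=\cF|_{\PP^1\times\{q\}}\). By Lemma--Definition~\ref{thm:universal-pushforward}, both \(R^1\pi_{2*}(\cE\otimes\pi_1^*L_c)\) and \(R^1\pi_{2*}(\cF\otimes\pi_1^*L_c)\) are locally free. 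Since \(R^1\) is the top cohomology on the fibers of \(\pi_2\), cohomology and base change commutes with restriction to \(q\). So the fiber of \(\Theta^1_{L_c}\) is \(\theta^1_{L_c}\), and the sequence restricted to \(\PP^1\times\{q\}\) is the long exact sequence of \(0\to E_q\to F_q\to N_q\to0\); restriction preserves exactness because \(\cN\) is flat over \(U\).

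The second step is to read off the rank of \(\theta^1\) from \eqref{eq:LES_fiber}. Maximal rank means \(\theta^1\) is injective or surjective. It is surjective iff \(H^1(N_q(-c-1))=0\). It is injective iff the connecting map \(H^0(N_q(-c-1))\to H^1(E_q(-c-1))\) is zero, which is equivalent to \(H^0(F_q(-c-1))\to H^0(N_q(-c-1))\) being surjective. A convenient sufficient condition for injectivity is \(H^0(N_q(-c-1))=0\). Hence it suffices to show that when \(N_q\) is balanced, for each \(c\) either \(H^0(N_q(-c-1))=0\) or \(H^1(N_q(-c-1))=0\).

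The third step is this splitting-type check. Write \(N_q=\bigoplus_i\cO(a_i)\) with all \(a_i\in\{a,a+1\}\). Then \(H^0(N_q(-c-1))\neq0\) requires some \(a_i\ge c+1\), while \(H^1(N_q(-c-1))\neq0\) requires some \(a_j\le c-1\). Both together would force \(a_i-a_j\ge2\), contradicting balancedness. So for every \(c\) one of the two groups vanishes, and \(\theta^1_{L_c}\) has maximal rank. This proves the contrapositive.

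The only delicate point is the base-change identification in the first step, namely exactness of the restricted six-term sequence at \(q\). I would handle it by noting that \(\cE\), \(\cF\) and \(\cN\) are all flat over \(U\), and that \(R^2\) vanishes on the fibers. Then the theorem on cohomology and base change gives \(R^1\pi_{2*}(-)\otimes k(q)\cong H^1(-|_q)\) for all three sheaves. Because the map \(\Theta^1\) is functorial, its fiber is \(\theta^1\). Actually only this identification of \(\theta^1\) is needed, since the rank analysis in the second step happens entirely on the fiber \(\PP^1\).
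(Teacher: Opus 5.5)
Your proposal is correct and is essentially the paper's argument in contrapositive form: both use that \(\ker(\Theta^1_{L_c}|_q)\) is the image of the connecting map from \(H^0(N_q\otimes L_c)\) and that \(\operatorname{coker}(\Theta^1_{L_c}|_q)\cong H^1(N_q\otimes L_c)\), and then the degree bounds \(a_i\ge c+1\), \(a_j\le c-1\) give a gap of at least two. Your explicit base-change argument, which uses that \(R^1\) is top-degree cohomology on the fibers and that \(\cN\) is flat over \(U\), justifies identifying the fiber of \(\Theta^1_{L_c}\) with the fiberwise map; the paper leaves this step implicit.
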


\begin{proof}
By assumption, both the kernel and the cokernel of $\left.\Theta_{L_c}^1\right|_q$ are nonzero.
Sheaf cohomology is given by the right derived functors of global sections, so
restricting the normal resolution \eqref{eq:two-term-resolution-N} to $\mathbb P^1\times\{q\}$ and applying the long exact cohomology sequence after twisting by $L_c$ induces natural surjective and isomorphic maps:
\begin{align*}
H^0(\PP^1,N_{q}\otimes L_c)\twoheadrightarrow\ker(\left.\Theta_{L_c}^1\right|_q), \quad
H^1(\PP^1,N_{q}\otimes L_c)\cong\operatorname{coker}(\left.\Theta_{L_c}^1\right|_q).
\end{align*}
The cohomology groups of \(N_q \otimes L_c\) are therefore nonzero.
The nonvanishing of $H^0$ requires a summand of $N_{f_q}(-c-1)$ of nonnegative degree, while that of $H^1$ requires one of degree at most $-2$.
Thus the largest and smallest summand degrees differ by at least two, so $N_{q}$ is unbalanced.
\end{proof}

To obtain explicit conditions for failure of maximal rank, we first compute the source rank minus the target rank, denoted by \(\delta_{\mathrm{exp}}(c)\), in Lemma~\ref{lem:expected-rank-difference}. Maximal rank requires the kernel to have dimension \(\max\{\delta_{\mathrm{exp}}(c),0\}\). We then define \(\delta_W(c)\), which is shown to be a lower bound for \(\dim\ker(\Theta_{L_c}^1|_q)\) at every \(q\in U\) and induces a sufficient condition for unbalancedness as Theorem~\ref{thm:toric-unbalanced-criterion}.

\begin{lemma}
\label{lem:expected-rank-difference}
Let $c\ge2$.
The rank of the source of $\Theta_{L_c}^1$ minus the rank of its target is given by
\[
\delta_{\mathrm{exp}}(c)=(m+1)c-2-\sum_{\rho\in\Sigma(1)}\max\{c-d_\rho,0\}.
\]
\end{lemma}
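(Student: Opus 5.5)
The plan is to compute both ranks directly from the explicit descriptions in Lemma--Definition~\ref{thm:universal-pushforward}, using \(h^1(\PP^1,\cO(a))=\max\{-a-1,0\}\). Since \(\Theta^1_{L_c}\) is a morphism of direct sums of line bundles on \(U\), its source and target ranks are just sums of these \(h^1\) values, one per summand of \(\cE\otimes\pi_1^*L_c\) and \(\cF\otimes\pi_1^*L_c\).

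For the source, I will split into the two cases of Theorem~\ref{thm:two-term-resolution-N}. If \(\bar\beta\ne0\), the source has rank \((m-1)h^1(\cO(-c-1))+2h^1(\cO(-c))\). Since \(c\ge2\), both twists are negative, and this equals \((m-1)c+2(c-1)=(m+1)c-2\). If \(\bar\beta=0\), the source has rank \(m\,h^1(\cO(-c-1))+h^1(\cO(1-c))\). Here \(1-c\le -1\), so \(h^1(\cO(1-c))=c-2\) (which is \(0\) when \(c=2\), still consistent with the formula). The total is \(mc+c-2=(m+1)c-2\), so both cases agree. The hypothesis \(c\ge2\) is what makes all the \(\max\{\cdot,0\}\) truncations on the \(\cE\) side inactive. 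I expect this to be the only delicate point: I need to check that the boundary case \(c=2\) with \(\bar\beta=0\) still gives the stated value. Alternatively, I could argue uniformly via Remark~\ref{rmk:intrinsic-interpretation-E-as-extension}: the defining sequence \(0\to\cO\to (V\otimes\cO)\oplus\cO(1,0)^{\oplus2}\to\cE\to0\), twisted by \(L_c\), has \(H^0\) vanishing on every term when \(c\ge 1\). Additivity of \(h^1\) then gives \((m+2)c-2(c-1)\) minus \(c\), that is, \(mc+2(c-1)-c+c\). I would carry out the case computation as the main argument and use the uniform one as a check.

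For the target, the rank is \(\sum_\rho h^1(\cO(d_\rho-c-1))=\sum_\rho\max\{c-d_\rho,0\}\). The twist by \(\cO_U(\zeta_\rho)\) does not affect the rank. Subtracting the target rank from the source rank gives the formula for \(\delta_{\mathrm{exp}}(c)\).
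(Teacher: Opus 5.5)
Your case computation is correct and is essentially the paper's proof, which restricts the resolution to a fiber and reads off $h^1(\mathcal E_q\otimes L_c)=(m+1)c-2$ and $h^1(\mathcal F_q\otimes L_c)=\sum_\rho\max\{c-d_\rho,0\}$ from the same decompositions. Only your side check is garbled: the middle term contributes $mc+2(c-1)$, so the count is $mc+2(c-1)-c=(m+1)c-2$, and neither of your written expressions equals this in general. Also, the additivity needs $H^0(\mathcal E\otimes L_c)=0$, which holds for $c\ge2$ but fails at $c=1$ when $\bar\beta=0$.
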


\begin{proof}
For \(q\in U\), restrict the short exact sequence~\eqref{eq:two-term-resolution-N} of Theorem~\ref{thm:two-term-resolution-N} to \(\PP^1\times\{q\}\) and twist by \(L_c\). The source and target of \(\left.\Theta_{L_c}^1\right|_q\) are \(H^1(\PP^1,\mathcal E_q\otimes L_c)\) and \(H^1(\PP^1,\mathcal F_q\otimes L_c)\), respectively. Since \(c\ge2\), the decompositions of \(\mathcal E\) and  \(\mathcal F\)  in  Theorem~\ref{thm:two-term-resolution-N} give \(h^1(\PP^1,\mathcal E_q\otimes L_c) = (m+1)c-2\) and   \( h^1(\PP^1,\mathcal F_q\otimes L_c)=\sum_{\rho\in\Sigma(1)}\max\{c-d_\rho,0\}, \) using \(h^1(\PP^1,\mathcal O(a))=\max\{-a-1,0\}\).
\end{proof}

\begin{definition}
For a subspace $W\subseteq V$, set 
\[I(W)=\{\rho\in\Sigma(1)\mid\lambda([D_\rho])\ne0\text{ for some }\lambda\in W\}.\] 
Then for each $c\ge2$, define 
\[
\delta_W(c)=c\dim W-\sum_{\rho\in I(W)}\max\{c-d_\rho,0\}.
\]
\end{definition}

\begin{theorem}
\label{thm:toric-unbalanced-criterion}
If there exist $c\ge2$ and a subspace $W\subseteq V$ such that
\begin{equation}
\label{eq:toric-unbalanced-criterion}
\delta_W(c)>\max\{\delta_{\mathrm{exp}}(c),0\},
\end{equation}
then the morphism $\Theta_{L_c}^1$ fails to have maximal rank at every point of $U$.
\end{theorem}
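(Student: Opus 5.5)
The plan is to bound the kernel of \(\Theta_{L_c}^1|_q\) from below by \(\delta_W(c)\) at every \(q\in U\). A morphism of maximal rank between spaces of dimensions \(a\) and \(b\) has kernel of dimension exactly \(\max\{a-b,0\}\). By Lemma~\ref{lem:expected-rank-difference}, this is \(\max\{\delta_{\mathrm{exp}}(c),0\}\) here. So once the bound is in hand, the hypothesis \eqref{eq:toric-unbalanced-criterion} immediately rules out maximal rank at every \(q\). Everything is fiberwise: fix \(q\in U\) and work on \(\PP^1\), with \(\cE_q,\cF_q\) the restrictions and \(\Psi_q\) the restricted morphism.

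\textbf{Step 1: a subbundle of \(\cE_q\) coming from \(W\).} Compose \(W\otimes\cO\hookrightarrow (V\otimes\cO)\oplus\cO(1)^{\oplus2}\) with the quotient map to \(\cE_q\). This composite is injective on every fiber. The reason is that the line \(\operatorname{im}(\bar\beta;-s,-t)\) has nowhere-vanishing \(\cO(1)^{\oplus2}\)-component, since \(s,t\) have no common zero. Hence it meets \(W\otimes\cO\) trivially in each fiber, and \(W\otimes\cO\) is a subbundle of \(\cE_q\).

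The quotient \(\cE_q/(W\otimes\cO)\) is a quotient of \((V/W)\otimes\cO\oplus\cO(1)^{\oplus2}\) by a line subbundle \(\cO\). It is therefore an iterated extension of line bundles of degrees \(0\), \(1\) and \(2\). In particular, \(H^0\) of its twist by \(L_c=\cO(-c-1)\) vanishes as soon as \(c\ge2\).

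The twisted long exact sequence then shows that
\[
H^1(W\otimes L_c)\to H^1(\cE_q\otimes L_c)
\]
is injective. Its source has dimension \(c\dim W\).

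\textbf{Step 2: control the image of this subspace under \(\Theta_{L_c}^1|_q\).} On \(W\otimes\cO\), the map \(\Psi_q\) is just the restriction of \(\epsilon_F\), namely \(\lambda\mapsto(\lambda([D_\rho])u_\rho)_\rho\). For \(\rho\notin I(W)\) the \(\rho\)-component vanishes identically. So \(\Psi_q|_{W\otimes\cO}\) factors through the summand \(\bigoplus_{\rho\in I(W)}\cO(d_\rho)\) of \(\cF_q\). By functoriality of \(H^1\), the subspace \(H^1(W\otimes L_c)\) is therefore mapped into
\[
\bigoplus_{\rho\in I(W)}H^1(\cO(d_\rho-c-1)),
\]
which has dimension \(\sum_{\rho\in I(W)}\max\{c-d_\rho,0\}\).

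\textbf{Step 3: conclude.} The restriction of \(\Theta_{L_c}^1|_q\) to a subspace of dimension \(c\dim W\), mapping into a space of the dimension above, has kernel of dimension at least \(\delta_W(c)\). Hence \(\dim\ker\Theta_{L_c}^1|_q\ge\delta_W(c)>\max\{\delta_{\mathrm{exp}}(c),0\}\), which is incompatible with maximal rank.

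The only delicate point is Step 1: checking that \(W\otimes\cO\) really injects into \(\cE_q\) as a subbundle and that the induced map on \(H^1\) is injective. This is where \(c\ge2\) enters, through the vanishing of \(H^0\) of the twisted quotient.
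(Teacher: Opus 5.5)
Your proof is correct and is essentially the paper's: the paper also embeds \(W\otimes H^1(\PP^1,L_c)\) into the source of \(\Theta_{L_c}^1\) via the inclusion \(V\otimes\cO\hookrightarrow\cE\), which is injective on \(H^1\) for \(c\ge2\) because the cokernel \(\cO(2,0)\otimes\pi_1^*L_c\) has no sections. It then observes that the restriction of \(\Theta_{L_c}^1\) to this subspace is the pushforward of \(\epsilon_F|_W\), which lands in the \(I(W)\)-summands, and compares dimensions exactly as you do. The paper phrases this with bundles on \(U\) and the comparison diagram before passing to fibers, whereas you work fiberwise from the start. A small imprecision: your quotient \(\cE_q/(W\otimes\cO)\) is an extension of \(\cO(2)\) by \((V/W)\otimes\cO\), with no degree-one pieces, but this gives the same vanishing of \(H^0\) after twisting by \(L_c\).
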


\begin{proof}
    
Choose $c\ge2$ and $W\subseteq V$ satisfying \eqref{eq:toric-unbalanced-criterion}.
The presentation of \(\mathcal E\) in Theorem~\ref{thm:two-term-resolution-N} induces an inclusion \(j:V\otimes\mathcal O\hookrightarrow\mathcal E\) with cokernel \(\mathcal O(1,0)^{\oplus2}/\operatorname{im}(-s,-t)\cong\mathcal O(2,0)\). Together with the map \(\epsilon_F\), we obtain the following commutative diagram of exact sequences on \(\mathbb P^1\times U\):
\begin{equation}
\label{diag:toric-euler-normal-comparison}
\begin{array}{ccccccccc}
&&0&&0&&0\\
&&\big\downarrow&&\big\downarrow&&\big\downarrow\\
0&\longrightarrow&V\otimes\mathcal O
&\xrightarrow{\mathrm{id}}&V\otimes\mathcal O
&\longrightarrow&0&\longrightarrow&0\\
&&\big\downarrow j&&\big\downarrow\epsilon_F&&\big\downarrow\\
0&\longrightarrow&\mathcal E
&\xrightarrow{\Psi}&\mathcal F
&\longrightarrow&\mathcal N&\longrightarrow&0\\
&&\big\downarrow&&\big\downarrow&&\big\Vert\\
0&\longrightarrow&T_{\mathbb P^1\times U/U}
&\xrightarrow{dF}&F^*T_X
&\longrightarrow&\mathcal N&\longrightarrow&0\\
&&\big\downarrow&&\big\downarrow&&\big\downarrow\\
&&0&&0&&0.
\end{array}
\end{equation}
Twisting by $\pi_1^*L_c$ and applying $R^1\pi_{2*}$, denote the following induced morphisms
\[
\begin{aligned}
A_W&:=R^1(\pi_2)_*(\epsilon_F|_W\otimes\mathrm{id}_{\pi_1^*L_c}),\quad
Y_W:=R^1(\pi_2)_*(j|_W\otimes\mathrm{id}_{\pi_1^*L_c}).
\end{aligned}
\]
Both maps have source $W\otimes H^1(\PP^1,L_c)\otimes\mathcal O_U$, and commutativity gives $\Theta_{L_c}^1\circ Y_W=A_W$.
Since \(c\ge2\), \(H^0(\PP^1,\mathcal O(1-c))=0\); thus twisting the first column of \eqref{diag:toric-euler-normal-comparison} by \(\pi_1^*L_c\) and pushing forward gives
\begin{equation}
\label{eq:toric-pushforward-extension}
\begin{aligned}
0\longrightarrow V\otimes H^1(\PP^1,L_c)\otimes\mathcal O_U
&\xrightarrow{Y_V} R^1(\pi_2)_*(\mathcal E\otimes\pi_1^*L_c)
\longrightarrow H^1(\PP^1,\mathcal O(1-c))\otimes\mathcal O_U\longrightarrow0.
\end{aligned}
\end{equation}
This exact sequence of vector bundles shows that \(Y_W\) is an inclusion of a subbundle, so restricting \(Y_W\) to \(\ker A_W\) gives an injection of coherent sheaves on \(U\),
\begin{equation}
\label{eq:toric-kernel-inclusion}
\ker A_W\hookrightarrow\ker\Theta_{L_c}^1.
\end{equation}
Since $Y_W$ remains injective on every fiber, the identity $\Theta_{L_c}^1\circ Y_W=A_W$ also gives the corresponding injection between the kernels of the fiber maps.
Now, the $\rho$-component of $\epsilon_F|_W$ vanishes for $\rho\notin I(W)$, so $A_W$ factors through the vector bundle
\[
\bigoplus_{\rho\in I(W)}H^1(\PP^1,\mathcal O(d_\rho)\otimes L_c)\otimes\mathcal O_U(\zeta_\rho).
\]
Comparing its rank with the source rank $c\dim W$ gives, for every $q\in U$,
\[
\dim\ker(\left.\Theta_{L_c}^1\right|_q)\ge\dim\ker(\left.A_W\right|_q)
\ge c\dim W-\sum_{\rho\in I(W)}\max\{c-d_\rho,0\}=\delta_W(c).
\]
Together with inequality \eqref{eq:toric-unbalanced-criterion}, this shows that the kernel dimension exceeds the value $\max\{\delta_{\mathrm{exp}}(c),0\}$ required for maximal rank.
Thus $\Theta_{L_c}^1$ fails to have maximal rank at every point of $U$.
\end{proof}

We show that for toric varieties of Picard rank two, under additional hypotheses, it suffices to test the criterion in Theorem~\ref{thm:toric-unbalanced-criterion} on a single geometrically induced subspace \(W_\pi\), defined as follows.
\begin{definition}
\label{def:morphism-subspace}
Let $\pi\colon X\to Y$ be an equivariant morphism between smooth projective toric varieties.
Define $W_\pi$  to be the annihilator \(\{\lambda\in V\mid\lambda\circ\pi^*=0\}\) of the image of the pullback map $\pi^*\colon\operatorname{Pic}(Y)\to\operatorname{Pic}(X)$.
\end{definition}

\begin{proposition}
\label{prop:optimal-Wpi-projective-bundle}
Assume that $X$ has Picard rank $2$, $d_\rho\ge0$ for every $\rho\in\Sigma(1)$, and that $X$ contains a rigid prime divisor $D$, that is, $h^0(X,\cO_X(D))=1$.
Let $A$ be a primitive nef non-big divisor class.
Let $\pi : X\to\PP^b$ be the morphism induced by the complete linear series for \(A\).
For each $c\ge2$, if there exists a subspace $W\subseteq V$ satisfying \eqref{eq:toric-unbalanced-criterion}, then $W_\pi$ is the only subspace satisfying that inequality and is the unique maximizer of $\delta_W(c)$ among all subspaces of $V$.
In particular, to verify the criterion in Theorem~\ref{thm:toric-unbalanced-criterion}, it suffices to test \eqref{eq:toric-unbalanced-criterion} with $W=W_\pi$.
\end{proposition}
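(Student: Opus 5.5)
Since $X$ has Picard rank $2$, we have $\dim V=2$, so the subspaces of $V$ are $0$, the lines $k\lambda$, and $V$ itself. The plan is to rule out $W=0$ and $W=V$ outright, and then compare $\delta_W(c)$ for an arbitrary line $W=k\lambda$ with $\delta_{W_\pi}(c)$.

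\textbf{Step 1: the trivial subspaces.} For $W=0$ we have $\delta_0(c)=0$, so \eqref{eq:toric-unbalanced-criterion} fails. For $W=V$, every $[D_\rho]$ is nonzero in $\operatorname{Pic}(X)$ because $X$ is complete, so $I(V)=\Sigma(1)$. Comparing with Lemma~\ref{lem:expected-rank-difference} gives
\[
\delta_{\mathrm{exp}}(c)=\delta_V(c)+c-2\ge\delta_V(c),
\]
since $c\ge2$. Hence $V$ never satisfies \eqref{eq:toric-unbalanced-criterion}, and only lines remain to be checked.

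\textbf{Step 2: the ray classes.} I would use Kleinschmidt's classification: a smooth projective toric variety of Picard rank two is a projective bundle $\PP(\cO\oplus\cO(a_1)\oplus\cdots\oplus\cO(a_r))$ over $\PP^b$. With the hypotheses on $A$, the map $\pi$ is the bundle projection. The rays then split into two groups:
\begin{itemize}
\item $b+1$ base rays, each of class $A$ and degree $e:=A\cdot\beta\ge0$;
\item $r+1$ fiber rays, of classes $B+a_iA$ with $a_0=0$.
\end{itemize}
The rigid prime divisor $D$ is the fiber ray class with extreme twist; rigidity should force the $a_i$ not to all coincide, or should pin down which classes can be proportional.

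Since $W_\pi$ is the line annihilating $A$, we get $I(W_\pi)=\{\text{fiber rays}\}$. Every other line $k\lambda$ has $\lambda(A)\ne0$, so $I(k\lambda)$ contains all base rays together with all fiber rays except those whose class lies in $\ker\lambda$. Because the classes $B+a_iA$ with distinct $a_i$ are pairwise non-proportional, at most one group of fiber rays with equal class is removed. One subtlety: in characteristic $p$, several classes $B+a_iA$ could become proportional modulo $p$. This has to be checked, or handled by noting that $\lambda(A)\ne0$ already suffices for the base-ray contribution.

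\textbf{Step 3: the comparison.} This gives
\[
\delta_{W_\pi}(c)-\delta_{k\lambda}(c)=(b+1)\max\{c-e,0\}-\sum_{\rho\ \text{fiber},\ [D_\rho]\in\ker\lambda}\max\{c-d_\rho,0\}.
\]
The main obstacle is showing that this difference is strictly positive whenever $k\lambda$ satisfies \eqref{eq:toric-unbalanced-criterion}. I would argue in two parts.
\begin{itemize}
\item First, $\delta_{k\lambda}(c)>0$ together with $d_\rho\ge0$ forces $c>e$. Otherwise the base rays contribute nothing and $\delta_{k\lambda}(c)$ is controlled by the full fiber sum, which Step 1's comparison with $\delta_{\mathrm{exp}}$ shows cannot exceed $\max\{\delta_{\mathrm{exp}}(c),0\}$.
\item Second, use $\sum_\rho d_\rho[D_\rho]$-type relations (the linear relations among the $d_\rho$ coming from $\beta$) to bound the removed fiber terms by $(b+1)(c-e)$, with strictness coming from $b+1\ge2$ and the rigidity of $D$.
\end{itemize}

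\textbf{Conclusion.} Once the difference is shown to be positive, $W_\pi$ is the unique maximizer of $\delta_W(c)$. Moreover, any $W\ne W_\pi$ satisfying the inequality would give $\delta_{W_\pi}(c)>\delta_W(c)$ with both satisfying it, so the remaining task is to check that no second line can also exceed $\max\{\delta_{\mathrm{exp}}(c),0\}$. I would obtain this from the same difference estimate applied symmetrically.
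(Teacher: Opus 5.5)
Your Step 1 and the bookkeeping in Step 2 are fine, but the core of the argument has a genuine gap.

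First, the comparison strategy cannot reach the main assertion. Suppose you show \(\delta_{W_\pi}(c)>\delta_{k\lambda}(c)\) for every line \(k\lambda\ne W_\pi\) satisfying \eqref{eq:toric-unbalanced-criterion}. That yields at most that \(W_\pi\) is the unique maximizer. It does not exclude a second line that also exceeds \(\max\{\delta_{\mathrm{exp}}(c),0\}\). There is also no symmetry between \(W_\pi\) and \(k\lambda\) to exploit, so your closing sentence is not an argument.

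What must be proved is a direct statement: a line \(W\) with \(\ev_A|_W\ne0\) can never satisfy both \(\delta_W(c)>0\) and \(\delta_W(c)>\delta_{\mathrm{exp}}(c)\). Once that holds, unique maximality is automatic, since any \(W'\) with \(\delta_{W'}(c)\ge\delta_{W_\pi}(c)\) satisfies the inequality.

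Second, the two bullets of Step 3 do not contain a working mechanism. The ``linear relations among the \(d_\rho\)'' are just \(d_\rho=h\) for base rays and \(d_\rho=z+jh\) for a fiber ray of class \(D+jA\), where \(h=A\cdot\beta\) and \(z=D\cdot\beta\). Your derivation of \(c>e\) appeals to the comparison for \(W=V\), which says nothing about a line. Rigidity also has to be made precise: \(\operatorname{Pic}(X)=\Z A\oplus\Z D\), exactly one ray has class \(D\), and every other fiber ray has class \(D+jA\) with \(j\ge1\).

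The missing idea is to use the two inequalities in sequence, as the paper does.

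\textbf{Upper bound on \(c\).} If \(\ev_A|_W\ne0\), all \(b+1\ge2\) base rays lie in \(I(W)\). Hence
\(0<\delta_W(c)\le c-2\max\{c-h,0\}\le 2h-c\), which gives the upper bound \(c<2h\). So rays of class \(D+jA\) with \(j\ge2\) have degree greater than \(c\) and contribute nothing.

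\textbf{Forcing a ray of class \(D+A\).} The inequality \(\delta_W(c)>\delta_{\mathrm{exp}}(c)\) says the rays outside \(I(W)\) contribute more than \(2c-2\ge c\). The unique ray of class \(D\) contributes at most \(c\). So some ray of class \(D+A\) must lie outside \(I(W)\), with \(z+h<c\).

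\textbf{Contradiction.} Then \(\ev_D|_W=-\ev_A|_W\ne0\), so the ray of class \(D\) lies in \(I(W)\). This gives \(\delta_W(c)\le c-(c-h)-(c-z)=h+z-c<0\), a contradiction.

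This also settles your characteristic-\(p\) concern. Several classes \(D+jA\) may lie in \(\ker\lambda\) modulo \(p\), but only the classes \(D\) and \(D+A\) can contribute. And \(\lambda\) cannot annihilate both, since \(\lambda(A)\ne0\).
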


\begin{proof}
Because the torus translates are linearly equivalent to each other, rigidity implies that $D$ is a torus-invariant prime divisor associated with a ray in \(\Sigma(1)\).
Peter Kleinschmidt's rank-two classification~\cite[Theorem~1]{Kleinschmidt1988} identifies $\pi$ as the projection of a split projective bundle.
The nonzero nef class $A$ is base-point-free by Cox--Little--Schenck~\cite[Theorem~6.3.12]{CoxLittleSchenck2011}, so $h^0(X,\cO_X(A)) = b + 1 \ge2$; since $A$ is primitive, rigidity prevents $D$ from being proportional to $A$.
Thus $D$ dominates the base and, being torus-invariant, restricts to a coordinate hyperplane on a fiber $\pi^{-1}(y)\cong\PP^{n-b}$ over the dense torus of $\PP^b$.
Restriction of line bundles on $X$ to this fiber, $\mathcal L\mapsto\mathcal L|_{\pi^{-1}(y)}$, induces the exact sequence
\[
0\longrightarrow\mathbb ZA\longrightarrow\operatorname{Pic}(X)
\xrightarrow{\mathrm{res}}\operatorname{Pic}(\pi^{-1}(y))\cong\mathbb Z\longrightarrow0.
\]
The class of $\cO_X(D)$ maps to the hyperplane generator $1$, so $\operatorname{Pic}(X)=\mathbb ZA\oplus\mathbb ZD$. 
Note that $W_\pi=\ker\operatorname{ev}_A$ is a line generated by  $D^\vee$. The invariant prime divisors comprise $b+1$ divisors of class $A$, the divisor $D$, and remaining divisors of classes $D+jA$ for integers $j > 0$. 

Fix $c\ge2$ and denote  \( h=A\cdot\beta\)   and   \(z=D\cdot\beta\).
Then $h\ge0$ by the nefness of $A$ and the effectivity of $\beta$, while $z\ge0$ by the assumption $d_\rho\ge0$ for all \(\rho\).
The zero subspace cannot satisfy \eqref{eq:toric-unbalanced-criterion} because \(\delta_{0}(c) \leq 0\), and neither can $V$ because
\[
\delta_V(c)=\delta_{\mathrm{exp}}(c)-c+2\le\delta_{\mathrm{exp}}(c).
\]
Thus any subspace \(W\) satisfying the inequality is a line in \(V\). We show that such a subspace \(W\) must be \(W_\pi\); because \(W_\pi\) is a line, it suffices to show that \(\operatorname{ev}_A|_W=0\).

Let $W$ be a line satisfying \eqref{eq:toric-unbalanced-criterion}; suppose for the sake of contradiction that $\operatorname{ev}_A|_W\ne0$. Then all $b+1$ invariant divisors of class $A$ occur in the defining sum for $\delta_W(c)$, giving
\[
0<\delta_W(c)
\le c-(b+1)\max\{c-h,0\}
\le c-2\max\{c-h,0\}
\le2h-c,
\]
where we used $b+1\ge2$.
Hence $c<2h$, so every divisor of class $D+jA$ with $j\ge2$ has degree $z+jh>c$. In particular, in the inequalities
{
\[
\sum_{\rho\in\Sigma(1)\setminus I(W)}\max\{c-d_\rho,0\}>2c-2\ge c,
\]
only divisors of classes $D$ and $D+A$ can contribute positively to this sum.
Since $D$ occurs only once by rigidity and contributes at most $c$, a divisor of class $D+A$ must contribute positively. Hence we must have \(\operatorname{ev}_{D+A}|_W=0\) and \(z + h < c\). 
Since $\operatorname{ev}_A|_W\ne0$ by assumption, we have
\[
\operatorname{ev}_D|_W=-\operatorname{ev}_A|_W\ne0.
\]
It follows that
\[
\delta_W(c) =  c-\sum_{\rho\in I(W)}\max\{c-d_\rho,0\}\le c-(c-h)-(c-z)=h+z-c<0,
\]
}
but this contradicts the hypothesis that \eqref{eq:toric-unbalanced-criterion} requires $\delta_W(c)>0$.
Therefore, $\operatorname{ev}_A|_W=0$, so that $W=W_\pi$.

\end{proof}

Later in Section~\ref{sec:linear-blowup-example}, we apply this Proposition~\ref{prop:optimal-Wpi-projective-bundle} to the blowup of projective space along a linear subspace, which satisfies the conditions in this proposition.

\section{Jumping Loci}
\label{sec:jumping_loci}

Question~\hyperref[que:jumping-locus]{(\ref*{que:jumping-locus})} concerns the jumping loci, where the splitting type of the normal bundle of an unramified rational curve changes as the curve varies in \(U\). In this section, we make the question precise: we introduce two notions of jumping loci, the \emph{splitting strata} defined in Section \ref{subsec:splitting-strata-codimension} and the \emph{cohomology-jump loci} defined in Section \ref{subsec:jumping-divisor-class-general}. We show that the two notions are related, and the latter is particularly useful being defined by the rank conditions of the relative cohomology morphisms. We conclude with a Porteous-type argument for the expected class of the jumping loci in the Chow ring of \(U\).

\subsection{Splitting strata}
\label{subsec:splitting-strata-codimension}

\begin{lemmadefinition}
Let \(U\) be the parameter space of unramified rational curves in a smooth projective toric variety \(X\) of dimension \(n\ge2\) with curve class \(\beta\).  Then the normal bundle has degree \(D=\beta\cdot(-K_X)-2=\sum_{\rho\in\Sigma(1)}d_\rho-2\) and the \emph{balanced splitting type }\(\mathbf{a}^\circ=(a_1^\circ,\ldots,a_{n-1}^\circ)\) is
\begin{equation}
\label{eq:general-splitting-type}
a_i^\circ= \begin{cases}c_*, & \text{for } 1 \leq i \leq n-1-b, \\ 
    c_*+1, & \text{for } n-b \leq i\leq n-1, \end{cases}
\end{equation}
where \(c_* =  \left\lfloor \frac{D}{n-1} \right\rfloor\) and \(b\) is the remainder in the Euclidean division \(D = (n-1)c_* + b\).
\end{lemmadefinition}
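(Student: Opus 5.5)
The content of this Lemma--Definition is the degree formula; the description of the balanced type is then elementary arithmetic. I will compute \(\deg N_q\) for a single \(q\in U\) in two ways: intrinsically, from the normal sequence, and through the resolution of Theorem~\ref{thm:two-term-resolution-N}. The second computation gives the explicit sum over rays.

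For the intrinsic computation, restrict \(0\to T_{\PP^1}\to f_q^*T_X\to N_q\to0\) to \(\PP^1\times\{q\}\) and take degrees. Since \(\deg f_q^*T_X=\deg f_q^*(-K_X)=\beta\cdot(-K_X)\) and \(\deg T_{\PP^1}=2\), this gives \(D=\beta\cdot(-K_X)-2\).

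For the second computation, use \eqref{eq:two-term-resolution-N} restricted to the fiber \(\PP^1\times\{q\}\). This is still exact, since \(\cN\) is locally free and hence flat over \(U\). We have \(\cF_q=\bigoplus_\rho\cO(d_\rho)\), of degree \(\sum_\rho d_\rho\). In both cases of Theorem~\ref{thm:two-term-resolution-N}, \(\cE_q\) has degree \(2\): it is \(\cO^{\oplus(m-1)}\oplus\cO(1)^{\oplus 2}\) or \(\cO^{\oplus m}\oplus\cO(2)\). Hence \(D=\sum_\rho d_\rho-2\).

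As a consistency check, \(-K_X=\sum_\rho D_\rho\) for a toric variety, so \(\beta\cdot(-K_X)=\sum_\rho d_\rho\) and the two expressions agree. The rank of \(N_q\) is \(\operatorname{rk}\cF-\operatorname{rk}\cE=|\Sigma(1)|-(m+1)=n-1\), using \(|\Sigma(1)|=n+m\) (the Picard rank of a smooth complete toric variety is \(|\Sigma(1)|-n\)).

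For the balanced type, I need the unique nondecreasing sequence of \(n-1\) integers with pairwise differences at most \(1\) and total \(D\). Such a sequence takes values in \(\{a,a+1\}\) for some \(a\), say with \(r\) entries equal to \(a+1\) and \(0\le r<n-1\). Then \(D=(n-1)a+r\), so uniqueness of Euclidean division forces \(a=c_*\) and \(r=b\). Ordering the entries nondecreasingly gives exactly \eqref{eq:general-splitting-type}. Conversely, this sequence is balanced and has total \(D\), so it exists.

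No step is a real obstacle. The only points needing care are that restricting the resolution to a fiber stays exact, which holds by flatness of \(\cN\) over \(U\), and the toric identity \(-K_X=\sum_\rho D_\rho\).
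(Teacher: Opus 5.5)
Your proposal is correct and follows the paper's proof. The paper makes the same two degree computations: one via the normal sequence, and one via the two-term resolution restricted to \(\PP^1\times\{q\}\), using \(\deg\cE_q=2\) and \(\deg\cF_q=\sum_{\rho}d_\rho\). Your additional checks are sound and fill in details the paper leaves implicit: exactness of the fiber restriction, the identity \(-K_X=\sum_\rho D_\rho\), the rank count \(n-1\), and the Euclidean-division argument for the balanced type.
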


\begin{proof}
For \(q\in U\), the normal exact sequence gives \(D=\deg N_q=\deg f_q^*T_X-\deg T_{\PP^1}=\beta\cdot(-K_X)-2\). Alternatively, restricting the two-term resolution in Theorem~\ref{thm:two-term-resolution-N} to \(\PP^1\times\{q\}\), the bundles \(\cE\) and \(\cF\) have degrees \(2\) and \(\sum_{\rho\in\Sigma(1)}d_\rho\), respectively, so \(D=\deg N_q=\sum_{\rho\in\Sigma(1)}d_\rho-2\).
\end{proof}

\begin{definition}
\label{def:normal-jumping-locus}
For an integer tuple \(\boldsymbol\delta=(\delta_1,\ldots,\delta_{n-1})\) satisfying \(\sum_i\delta_i=0\) and \(a_1^\circ+\delta_1\le\cdots\le a_{n-1}^\circ+\delta_{n-1}\), the \emph{splitting stratum with unbalancedness \(\boldsymbol\delta\)} is
\[
\cJ_{\boldsymbol\delta}=\left\{q\in U\mid N_q \text{ has splitting type }\mathbf{a}^\circ+ \boldsymbol{\delta}\right\}.
\]
Thus \(\boldsymbol\delta=\mathbf a-\mathbf a^\circ\) records deviation from the balanced splitting type, and \(\cJ_{\mathbf0}\) is the balanced locus. Write \(\mathbf a^\circ+\boldsymbol\delta^{\mathrm{gen}}\) for the splitting type of the general normal bundle in \(U\). The \emph{first jumping locus} is the  union \(\cJ=\bigsqcup_{\boldsymbol\delta\ne\boldsymbol\delta^{\mathrm{gen}}}\cJ_{\boldsymbol\delta}\). When the general normal bundle is balanced, \(\boldsymbol\delta^{\mathrm{gen}}=\mathbf0\), and the first jumping locus coincides with the unbalanced locus.
\end{definition}

\begin{remark}
\label{rmk:deformation-expected-codimension}
Let \(\boldsymbol\delta=(\delta_1,\ldots,\delta_{n-1})\) be as in Definition~\ref{def:normal-jumping-locus}, and set \(E_{\boldsymbol\delta}=\bigoplus_{i=1}^{n-1}\cO_{\PP^1}(a_i^\circ+\delta_i)\). By Larson's formula~\cite{Larson}, the splitting stratum \(\cJ_{\boldsymbol\delta}\) has expected codimension
\[
u(E_{\boldsymbol\delta})=h^1(\PP^1,\operatorname{End}E_{\boldsymbol\delta})
=\sum_{i<j}\max\{(a_j^\circ-a_i^\circ)+(\delta_j-\delta_i)-1,0\}
\] 
in the sense that, for every \(q\in\cJ_{\boldsymbol\delta}\), the local codimension of \(\cJ_{\boldsymbol\delta}\) at \(q\) is at most \(u(E_{\boldsymbol\delta})\).
\end{remark}

\subsection{Cohomological jumping loci}
\label{subsec:jumping-divisor-class-general}

The relative cohomology morphisms allow us to describe the splitting strata by their rank conditions. In this section, we first define cohomology-jump loci for fixed twists, induced by the relative cohomology morphisms, and relate them to splitting strata by considering all twists. We then use the determinantal equations of the relative cohomology morphisms to compute expected codimensions and classes of the jumping loci via Porteous's formula.

For the remainder of the paper, \(r_L\) denotes the generic rank of \(\Theta_L^1\) over \(U\). For a fixed twist \(L\) and \(j\ge0\), the locus \(J_L^j\) consists of points \(q\in U\) where \(h^1(N_q\otimes L)\) is at least \(j\) greater than its value at a general point of \(U\).

\begin{definition}
\label{def:cohomology-jump-loci}
For a line bundle \(L\) on \(\PP^1\) and \(j\ge0\), the \emph{\(j\)-th cohomological jump locus} is the locus where \(\Theta_L^1\) has rank at most \(r_L - j\):
\[
J_L^j:=\left\{q\in U\mid r_L - \operatorname{rank}(\left.\Theta_L^1\right|_q)\geq j\right\}.
\]
\end{definition}
For \(1\le j\le r_L\), the cohomology-jump locus \(J_L^j\) carries the determinantal closed scheme structure cut out by the \((r_L-j+1)\times(r_L-j+1)\) minors of a local matrix for \(\Theta_L^1\). More concretely, after choosing monomial bases, \(\Theta_L^1\) is represented by the multiplication matrices of the entries of the applicable matrix for \(\Psi\) in Remark~\ref{rmk:matrix-Psi}. Its matrix entries are linear combinations of the Cox coefficients over \(k\) and are sections of the corresponding \(\cO_U(\zeta_\rho)\). The equations depend on the characteristic of \(k\); we illustrate this construction for the blowup family in Section~\ref{sec:linear-blowup-example}, with explicit matrices and computations of their ranks and determinants in Appendix~\ref{app:theta-q-computation}.  

Applying the cohomological characterization of splitting types in \cite[Section~2]{Larson} across all twists, the following lemma expresses each splitting stratum as an intersection of exact-rank loci.

\begin{lemma}
\label{lem:jumping-generic-rank}
For \(\boldsymbol\delta\) as in Definition~\ref{def:normal-jumping-locus}, let
\[
j_{\boldsymbol\delta}(c)
=\sum_{i=1}^{n-1}\left(
h^1\!\left(\cO(a_i^\circ+\delta_i)\otimes L_c\right)
-h^1\!\left(\cO(a_i^\circ+\delta_i^{\mathrm{gen}})\otimes L_c\right)
\right),
\]
the change in \(h^1\) after twisting by \(L_c\) prescribed by the splitting type \(\mathbf a^\circ+\boldsymbol\delta\), relative to the general splitting type. If \(j_{\boldsymbol\delta}(c)\ge0\) for every \(c\in\Z\), then
\[
\cJ_{\boldsymbol\delta}
=\bigcap_{c\in\Z}
\left(J_{L_c}^{\,j_{\boldsymbol\delta}(c)}
\setminus J_{L_c}^{\,j_{\boldsymbol\delta}(c)+1}\right).
\]
If \(j_{\boldsymbol\delta}(c)<0\) for some \(c\), then \(\cJ_{\boldsymbol\delta}=\varnothing\). In particular, the first jumping locus satisfies
\begin{equation}
\label{eq:jumping-locus-union}
\cJ=\bigcup_{c\in\Z}J_{L_c}^1.
\end{equation}
\end{lemma}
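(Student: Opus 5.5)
The plan is to translate the rank of \(\Theta_{L_c}^1\) at each point into \(h^1(N_q\otimes L_c)\), and then use that splitting types on \(\PP^1\) are determined by these numbers as \(c\) varies.

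\emph{Step 1: pointwise formula.} Fix \(q\in U\) and \(c\). Restricting \eqref{eq:two-term-resolution-N} to \(\PP^1\times\{q\}\) and twisting by \(L_c\) gives, as in the proof of Lemma~\ref{lem:rank-deficiency-implies-unbalancedness}, an isomorphism \(H^1(N_q\otimes L_c)\cong\operatorname{coker}(\Theta_{L_c}^1|_q)\). The source and target of \(\Theta^1_{L_c}\) are vector bundles, and by \eqref{eq:pushforward-bases} their fibers are the corresponding \(H^1\) on \(\PP^1\). This uses that \(H^1\) on curves commutes with base change, since \(H^2=0\). Writing \(t_c\) for the target rank, we get
\[
h^1(N_q\otimes L_c)=t_c-\operatorname{rank}\bigl(\Theta_{L_c}^1|_q\bigr).
\]
At a general point the rank equals \(r_{L_c}\), and there \(N_q\) has the general splitting type \(\mathbf a^\circ+\boldsymbol\delta^{\mathrm{gen}}\). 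The general type exists because \(U\) is integral and splitting type is upper semicontinuous, so it is constant on a dense open set. Intersecting that set with the open set where the rank is \(r_{L_c}\) gives
\[
r_{L_c}-\operatorname{rank}\bigl(\Theta^1_{L_c}|_q\bigr)=h^1(N_q\otimes L_c)-\sum_i h^1\bigl(\cO(a_i^\circ+\delta_i^{\mathrm{gen}})\otimes L_c\bigr).
\]
Therefore \(q\in J^{j}_{L_c}\setminus J^{j+1}_{L_c}\) exactly when this difference equals \(j\).

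\emph{Step 2: the splitting stratum.} If \(q\in\cJ_{\boldsymbol\delta}\), Step 1 shows the difference equals \(j_{\boldsymbol\delta}(c)\) for every \(c\). Since \(r_{L_c}\) is the generic rank, rank lower semicontinuity gives \(\operatorname{rank}\le r_{L_c}\) everywhere, so the difference is \(\ge0\). Hence if \(j_{\boldsymbol\delta}(c)<0\) for some \(c\), then \(\cJ_{\boldsymbol\delta}\) is empty.

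For the converse, suppose \(q\) lies in the intersection. Then \(h^1(N_q\otimes L_c)=\sum_i h^1(\cO(a_i^\circ+\delta_i)\otimes L_c)\) for all \(c\). Following Larson \cite[Section~2]{Larson}, the function \(c\mapsto h^1(E(-c-1))=\sum_i\max\{c-a_i,0\}\) determines the multiset \(\{a_i\}\). Its second difference in \(c\) counts the \(a_i\) equal to a given value. Thus \(N_q\) has type \(\mathbf a^\circ+\boldsymbol\delta\). Only integers \(c\) in a bounded range matter, but the statement as an intersection over all \(c\) is fine.

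\emph{Step 3: the first jumping locus.} If \(q\in\cJ\), its type \(\mathbf a\) differs from the general type \(\mathbf a^{\mathrm{gen}}\). By the injectivity in Step 2, the functions \(c\mapsto h^1(\cO(\mathbf a)\otimes L_c)\) and \(c\mapsto h^1(\cO(\mathbf a^{\mathrm{gen}})\otimes L_c)\) then differ for some \(c\). The difference is \(\ge0\) by semicontinuity, hence positive, so \(q\in J^1_{L_c}\).

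Conversely, if \(q\in J^1_{L_c}\), then \(h^1(N_q\otimes L_c)\) differs from the general value. The type of \(N_q\) is therefore not general, and \(q\in\cJ\).

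\emph{Main obstacle.} The main point is Step 1's identification of the generic rank with the general splitting type's \(h^1\). This needs the base-change compatibility of \(R^1\pi_{2*}\) and the existence of a common dense open subset of \(U\) where both quantities are generic. The rest is bookkeeping.
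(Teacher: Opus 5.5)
Your proposal is correct and follows the same route as the paper. You identify \(H^1(N_q\otimes L_c)\) with \(\operatorname{coker}(\Theta_{L_c}^1|_q)\) and compare with the generic rank to get \(r_{L_c}-\operatorname{rank}(\Theta_{L_c}^1|_q)=h^1(N_q\otimes L_c)-\sum_i h^1(\cO(a_i^\circ+\delta_i^{\mathrm{gen}})\otimes L_c)\); then, as the paper does, you use that the \(h^1\) of all twists determines the splitting type, and that \(\operatorname{rank}\le r_{L_c}\) everywhere gives the emptiness and union statements. Your added points (base change for \(R^1\pi_{2*}\), existence of the general type, and recovering the multiset from second differences) only fill in details the paper leaves implicit.
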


\begin{proof}
By the fiberwise cohomology sequence in Lemma--Definition~\ref{thm:universal-pushforward}, \(H^1(N_q\otimes L_c)\cong\operatorname{coker}(\left.\Theta_{L_c}^1\right|_q)\). Comparing its dimension with that of the general fiber gives
\[
h^1(N_q\otimes L_c)-\sum_{i=1}^{n-1}h^1(\cO(a_i^\circ+\delta_i^{\mathrm{gen}})\otimes L_c)
=r_{L_c}-\operatorname{rank}(\left.\Theta_{L_c}^1\right|_q).
\]
Two vector bundles on \(\PP^1\) have the same splitting type if and only if their \(h^1\) dimensions agree after every twist. Thus \(N_q\) has splitting type \(\mathbf a^\circ+\boldsymbol\delta\) if and only if \(r_{L_c}-\operatorname{rank}(\left.\Theta_{L_c}^1\right|_q)=j_{\boldsymbol\delta}(c)\) for every \(c\). These equalities give the stated intersection formula.

Since \(\operatorname{rank}(\left.\Theta_{L_c}^1\right|_q)\le r_{L_c}\) at every point, \(j_{\boldsymbol\delta}(c)<0\) for some \(c\) implies \(\cJ_{\boldsymbol\delta}=\varnothing\). By the same characterization, the splitting type differs from the general type if and only if \(\operatorname{rank}(\left.\Theta_{L_c}^1\right|_q)<r_{L_c}\) for some \(c\), hence the equality~\eqref{eq:jumping-locus-union} holds.
\end{proof}

The description of \(J_{L_c}^j\) by minors following Definition~\ref{def:cohomology-jump-loci} leads to the following application of Porteous's formula in Proposition~\ref{prop:cohomology-jump-class}. The matrix dimensions and the sizes of these minors determine the expected codimension, while the Chern classes of the source and target of \(\Theta_{L_c}^1\) determine the expected class. The latter equals the fundamental class when the locus has the expected codimension.

\begin{proposition}
\label{prop:cohomology-jump-class}
Fix \(c\ge2\), and let \(e\) and \(f\) be the ranks of the source and target of \(\Theta_{L_c}^1\), respectively. Set
\[
C(z)=\prod_{\rho\in\Sigma(1)}(1+\zeta_\rho z)^{\max\{c-d_\rho,0\}}\in A^*(U)[z],
\]
where \(A^*(U)\) is the Chow ring of \(U\) and \(z\) is a formal variable.
For \(1\le j\le r_{L_c}\), the locus \(J_{L_c}^j\) has determinantal expected codimension
\[
\kappa_j=(e-r_{L_c}+j)(f-r_{L_c}+j).
\]
If \(J_{L_c}^j\) is nonempty and has pure codimension \(\kappa_j\), then its class is given by
\begin{equation}
\label{eq:porteous-cohomology-jump-class}
[J_{L_c}^j]
=\det\!\left([C(z)]_{f-r_{L_c}+j+a-b}\right)_{1\le a,b\le e-r_{L_c}+j}
\quad\text{in }A^{\kappa_j}(U),
\end{equation}
where \([P(z)]_\ell\) denotes the coefficient of \(z^\ell\), with negative-index coefficients taken to be zero.

\end{proposition}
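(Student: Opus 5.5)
The plan is to realize \(J_{L_c}^j\) as a degeneracy locus of the bundle map \(\Theta_{L_c}^1\) on the smooth variety \(U\), and then apply the Thom--Porteous formula in the form of Fulton, \emph{Intersection Theory}, Theorem~14.4. Set \(k=r_{L_c}-j\), so that \(0\le k<r_{L_c}\le\min\{e,f\}\). By Definition~\ref{def:cohomology-jump-loci} and the determinantal scheme structure described after it, \(J_{L_c}^j\) is the degeneracy locus
\[
D_k(\Theta_{L_c}^1)=\{q\in U\mid \operatorname{rank}\Theta_{L_c}^1|_q\le k\},
\]
cut out by the \((k+1)\times(k+1)\) minors. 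Its expected codimension for a map from a rank-\(e\) bundle to a rank-\(f\) bundle is \((e-k)(f-k)\), which is exactly \(\kappa_j\).

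The first key step is to compute the total Chern class of the virtual bundle \(\mathcal{T}-\mathcal{S}\), where \(\mathcal{S}\) and \(\mathcal{T}\) are the source and target of \(\Theta_{L_c}^1\). By Lemma--Definition~\ref{thm:universal-pushforward}:
\begin{itemize}
\item the source \(\mathcal{S}\) is a trivial bundle \(H^1(\cdots)\otimes_k\cO_U\), so \(c(\mathcal{S})=1\);
\item the target is \(\mathcal{T}=\bigoplus_{\rho}H^1(\cO(d_\rho-c-1))\otimes_k\cO_U(\zeta_\rho)\).
\end{itemize}
Since \(h^1(\PP^1,\cO(d_\rho-c-1))=\max\{c-d_\rho,0\}\), the Whitney formula gives
\[
c(\mathcal{T}-\mathcal{S})=c(\mathcal{T})/c(\mathcal{S})=\prod_\rho(1+\zeta_\rho)^{\max\{c-d_\rho,0\}}.
\]
Its degree-\(\ell\) part is \([C(z)]_\ell\). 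As a consistency check, the ranks \(e\) and \(f\) agree with the count in Lemma~\ref{lem:expected-rank-difference}.

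The second step is to invoke Fulton's theorem. For \(\sigma\colon\mathcal{S}\to\mathcal{T}\), there is a class in \(A_*(D_k(\sigma))\) whose pushforward to \(A^*(U)\) is
\[
\Delta^{(e-k)}_{f-k}\bigl(c(\mathcal{T}-\mathcal{S})\bigr)=\det\bigl(c_{f-k+b-a}\bigr)_{1\le a,b\le e-k}.
\]
This matrix is the transpose of the one in \eqref{eq:porteous-cohomology-jump-class}, so the determinants agree. Substituting \(k=r_{L_c}-j\) gives the stated indices \(f-r_{L_c}+j+a-b\) and size \(e-r_{L_c}+j\). Negative-index Chern classes vanish, and coefficients of index above \(f\) vanish automatically.

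Finally, we need the comparison with the fundamental class. \(U\) is smooth, hence Cohen--Macaulay. If \(J_{L_c}^j\) is nonempty of pure codimension \(\kappa_j\), then Fulton's Theorem~14.4(c) (via Eagon--Northcott) says the determinantal scheme is Cohen--Macaulay and the localized Porteous class equals its fundamental cycle \([J_{L_c}^j]\).

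The main obstacle is bookkeeping rather than geometry. I need to match Fulton's convention \(c(F-E)\) with the Schur determinant indexing, and to check that the determinantal scheme structure fixed after Definition~\ref{def:cohomology-jump-loci} is precisely the one in Fulton's statement. Only then does the equality hold with scheme-theoretic multiplicities rather than just on reduced supports. I would check this first against the case \(j=r_{L_c}=\min\{e,f\}\), where the formula should reduce to the top Chern class or a single Chern class.
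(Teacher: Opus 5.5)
Your proposal is correct and follows essentially the same route as the paper's proof. Both arguments identify the source of \(\Theta_{L_c}^1\) as trivial and the target as \(\bigoplus_\rho \cO_U(\zeta_\rho)^{\oplus\max\{c-d_\rho,0\}}\), so that \(C(z)\) is the Chern polynomial of target minus source, and then apply Porteous's formula (Fulton, Theorem~14.4) to the rank-at-most-\(r_{L_c}-j\) degeneracy locus. Your extra remarks only spell out what the paper's one-line citation leaves implicit: that the paper's matrix is the transpose of Fulton's \(\Delta^{(e-k)}_{f-k}\), and that part (c) of Fulton's theorem on the smooth, hence Cohen--Macaulay, \(U\) identifies the Porteous class with the fundamental class.
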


\begin{proof}
Denote the source and target of \(\Theta_{L_c}^1\) by
\[
E_{L_c}=R^1\pi_{2*}(\cE\otimes\pi_1^*L_c),\qquad
F_{L_c}=R^1\pi_{2*}(\cF\otimes\pi_1^*L_c).
\]
By Lemma--Definition~\ref{thm:universal-pushforward},
\(
E_{L_c}\cong\cO_U^{\oplus e}\) and \(
F_{L_c}\cong\bigoplus_{\rho\in\Sigma(1)}
\cO_U(\zeta_\rho)^{\oplus\max\{c-d_\rho,0\}}.
\)
The formal expansion \(C(z)\) is the Chern polynomial of \(F_{L_c}-E_{L_c}\). By Porteous's formula \cite{Fulton1998}, the expected codimension of \(J_{L_c}^j\) is \(\kappa_j=(e-r_{L_c}+j)(f-r_{L_c}+j)\), and when the locus has the expected codimension, then its class is given by \eqref{eq:porteous-cohomology-jump-class}.
\end{proof}

\begin{corollary}
\label{cor:first-cohomology-jump-class}
With the notation of Proposition~\ref{prop:cohomology-jump-class}, if \(r_{L_c}=\min\{e,f\}>0\), the first cohomological jump locus \(J_{L_c}^1\) has expected codimension \(\kappa=|f-e|+1\). If \(J_{L_c}^1\) is nonempty and has pure codimension \(\kappa\), then its class is given by
\begin{equation}
\label{eq:porteous-first-jump-class}
[J_{L_c}^1]=
\begin{cases}
[C(z)]_\kappa,&e\le f,\\[2pt]
\bigl[C(-z)^{-1}\bigr]_\kappa,&e>f,
\end{cases}
\quad\text{in }A^\kappa(U),
\end{equation}
where the inverse \(C(-z)^{-1}\) is expanded as a formal power series.
\end{corollary}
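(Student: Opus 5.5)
The corollary follows from Proposition~\ref{prop:cohomology-jump-class} with \(j=1\) and \(r_{L_c}=\min\{e,f\}\); what remains is to collapse the determinant to a single coefficient. The codimension is immediate. In the case \(e\le f\) we have \(r_{L_c}=e\), so \(e-r_{L_c}+1=1\) and \(f-r_{L_c}+1=f-e+1\). Hence \(\kappa_1=f-e+1=|f-e|+1\). In the case \(e>f\), symmetrically \(r_{L_c}=f\), and \(\kappa_1=(e-f+1)\cdot1=|f-e|+1\). The hypothesis \(r_{L_c}>0\) guarantees that \(1\le j=1\le r_{L_c}\), so Proposition~\ref{prop:cohomology-jump-class} applies.

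For the class when \(e\le f\), formula \eqref{eq:porteous-cohomology-jump-class} is a \(1\times1\) determinant with \(a=b=1\). Its single entry is \([C(z)]_{f-e+1}=[C(z)]_\kappa\), which gives the first case directly.

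When \(e>f\), the determinant has size \(e-f+1=\kappa\), with entries \(c_{1+a-b}\), where \(c_\ell:=[C(z)]_\ell\) and \(c_0=1\). The plan is to invoke the standard Jacobi--Trudi duality for Toeplitz determinants. For a power series \(C(z)=\sum c_\ell z^\ell\) with \(c_0=1\), write \(C(z)^{-1}=\sum h'_\ell z^\ell\). Then
\[
\det(c_{1+a-b})_{1\le a,b\le\kappa}=(-1)^\kappa h'_\kappa .
\]
This can be proved by expanding along the first row, or by Cramer's rule applied to the triangular system \(\sum_i c_i h'_{\kappa-i}=0\). Since \((-1)^\kappa h'_\kappa=[C(-z)^{-1}]_\kappa\), the second case follows.

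The main obstacle is getting this identity and its sign exactly right, including checking that the matrix is the lower Hessenberg matrix with ones on the superdiagonal, using the convention that negative-index coefficients vanish. I would first verify it for \(\kappa=1,2\): for \(\kappa=1\) it gives \(c_1=-h'_1\), and for \(\kappa=2\) it gives \(c_1^2-c_2=h'_2\). I would then do the induction by cofactor expansion along the last column. Equivalently, this is the usual interchange in Porteous's formula between \(c(F-E)\) and \(s(E-F)\), which Fulton states directly, so one could alternatively cite the symmetric form of Porteous's formula \cite{Fulton1998}.
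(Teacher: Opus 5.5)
Your proof is correct. The codimension count and the case $e\le f$ match the paper exactly, but for $e>f$ you take a different route.

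The paper never expands the $\kappa\times\kappa$ determinant from Proposition~\ref{prop:cohomology-jump-class}. Instead it applies Porteous's formula to the transposed map $F_{L_c}^\vee\to E_{L_c}^\vee$, whose maximal minors cut out the same scheme. There the source has the smaller rank, so the determinant is $1\times1$ and equals $c_\kappa(E_{L_c}^\vee-F_{L_c}^\vee)$. This is $[C(-z)^{-1}]_\kappa$, because $E_{L_c}$ is trivial and $c(F_{L_c}^\vee)=C(-z)$.

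You keep the original orientation and evaluate the lower-Hessenberg Toeplitz determinant $\det(c_{1+a-b})_{1\le a,b\le\kappa}$ through the Jacobi--Trudi duality $\det(c_{1+a-b})=(-1)^\kappa h'_\kappa=[C(-z)^{-1}]_\kappa$. The sign is right; for instance, at $\kappa=3$ both sides equal $c_1^3-2c_1c_2+c_3$.

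The two routes trade off as follows. The paper's dualization is shorter and avoids sign bookkeeping, but it re-invokes Porteous for the dual map and uses the triviality of the source. Your argument is purely formal and uses only the already-stated Proposition. In effect it verifies that the two orientations of Porteous's formula give the same class; your closing remark about interchanging $c(F-E)$ and $s(E-F)$ is essentially the paper's route.
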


\begin{proof}
Use the notation \(E_{L_c}\) and \(F_{L_c}\) from the proof of Proposition~\ref{prop:cohomology-jump-class}. For \(j=1\), generic maximal rank gives \(\kappa=|f-e|+1\). If \(e\le f\), the determinant has size one and equals \(c_\kappa(F_{L_c}-E_{L_c})=[C(z)]_\kappa\). If \(e>f\), apply the same formula to \(F_{L_c}^\vee\to E_{L_c}^\vee\), whose maximal minors define the same scheme. The resulting class is \(c_\kappa(E_{L_c}^\vee-F_{L_c}^\vee)=[C(-z)^{-1}]_\kappa\), giving \eqref{eq:porteous-first-jump-class}.
\end{proof}

\begin{remark}
The generic rank \(r_{L_c}\) can also be read from the general splitting type \(\boldsymbol a^\circ + \boldsymbol\delta^{\mathrm{gen}}\):
\[
r_{L_c}=f-\sum_{i=1}^{n-1}
\max\{c-a_i^\circ-\delta_i^{\mathrm{gen}},0\}.
\]
Indeed, the sum is the dimension of the cokernel at a general point, by Lemma~\ref{lem:jumping-generic-rank}. Proposition~\ref{prop:cohomology-jump-class} computes classes of the individual cohomological jump loci \(J_{L_c}^j\). Identifying one of these loci with the first jumping locus \(\cJ\) requires an additional argument, as in the example of Section~\ref{sec:linear-blowup-example}.
\end{remark}

\section{Application to blowup of projective space}
\label{sec:linear-blowup-example}

As an application, we apply the construction to the case when \(X\) is  the blowup  \(X_{r,s}=\operatorname{Bl}_{\Lambda}\PP^r\) of projective space along a linear subspace \(\Lambda\cong\PP^s\). Denote the pullback to \(X\) of the hyperplane class on \(\PP^r\) by \(H\), and the exceptional divisor by \(E\). We fix the curve class \(\beta\) of degree-\(d\) unramified rational curves in \(X\) with intersection number \(k\) with \(E\). Equivalently, if \(\beta=f_*[\PP^1]\), then \(H\cdot\beta=d\) and \(E\cdot\beta=k\); we write \(\beta = (d,k)\). We assume \(r\geq2\), \(0\leq s\leq r-2\), and \(d>k\geq0\).

\begin{remark}
Under these assumptions, \(U\ne\varnothing\) in every characteristic: a general morphism \(\PP^1\to X_{r,s}\) of class \((d,k)\) is unramified by Cela--Lian~\cite[Proposition~2.1.1]{CelaLian2026}.
\end{remark}


\subsection{Parameter space}
\label{sec:blowup-parameter-space}
\label{app:cox-coordinate-linear-blowup-example}
We first construct the parameter space \(U\) following the procedure in Section \ref{sec:parameter-space-preliminaries}.
Choose homogeneous coordinates \([x_0:\cdots:x_r]\) on \(\PP^r\) so that the blowup center is of the form \(\Lambda=V(x_{s+1},\ldots,x_r)\). 
The primitive ray generators of the fan of \(X\) are
\[  
  \rho_0=-\sum_{j=1}^r e_j,\qquad \rho_i=e_i\quad(1\le i\le r), \quad \rho_E = e_{s+1}+\cdots+e_r,
\]
where \(e_1,\ldots,e_r\) form the standard lattice basis.
Denote the Cox variables associated with \(\rho_0,\ldots,\rho_r,\rho_E\), in that order, by
\((u_0,\ldots,u_s,v_{s+1},\ldots,v_r,w)\). Thus the Cox ring is
\[
  R_\Sigma=k[u_0,\ldots,u_s,v_{s+1},\ldots,v_r,w],
\]
where the Cox variables have degrees \(\deg(u_i)=H\), \(\deg(v_j)=H-E,\) and \( \deg(w)=E.\)
Denote \(\mathbf u=(u_0,\ldots,u_s)\) and \(\mathbf v=(v_{s+1},\ldots,v_r)\).
The blowup map \(X \to \PP^r\) is a toric-equivariant map, given in Cox coordinates by
\[
  (\mathbf u,\mathbf v,w)\longmapsto
  [u_0:\cdots:u_s:wv_{s+1}:\cdots:wv_r].
\]

We now use the same symbols for homogeneous polynomials on the fixed source \(\PP^1\). Set \(S_m=H^0(\PP^1_k,\cO_{\PP^1_k}(m))\), so that \(u_i\in S_d\), \(v_j\in S_{d-k}\), and \(w\in S_k\). For the class \(\beta=(d,k)\), the coefficient space \(\cA\) is
\[
  \cA = \cA^{r,s}_{d,k}
  =S_d^{\oplus(s+1)}
    \oplus S_{d-k}^{\oplus(r-s)}
    \oplus S_k.
\]
The stable locus \(\cA^\circ\) consists of tuples with \(\mathbf v\not\equiv0\) and \((\mathbf u,w)\not\equiv0\).
For an honest morphism, the base-point-free conditions are
\[
  (v_{s+1}(p),\ldots,v_r(p))\ne0,\quad
  (u_0(p),\ldots,u_s(p),w(p))\ne0, \quad  \text{ for all }p \in \PP^1(k).
\]
The Cox-torus quotient of \(\cA^\circ\) is the space  \(Q\) of stable toric quasimaps introduced in Section~\ref{sec:parameter-space-preliminaries}; \(Q\) can be realized as a projective bundle as follows.
\begin{lemma}[{\cite{CelaLian2026}}]
Assume \(r\geq2\), \(0\leq s\leq r-2\), and \(d>k\geq0\). Then \(Q\) is the projective bundle over \(P_v = \PP\bigl(S_{d-k}^{\oplus(r-s)}\bigr)\),
\[ p : Q \cong \PP_{P_v}(\cE^{r,s}_{d,k})\longrightarrow P_v,\]
where \(\cE^{r,s}_{d,k}\) is the vector bundle
\[\cE^{r,s}_{d,k} =\bigl(S_d^{\oplus(s+1)}\otimes_k\cO_{P_v}\bigr)
    \oplus\bigl(\cO_{P_v}(-1)\otimes_k S_k\bigr).\]
With projectivization parametrizing lines, the Cox-torus quotient map \(\cA^\circ\to Q\) is given by
\[
  (\mathbf u,\mathbf v,w)
  \longmapsto\bigl([\mathbf v],[\mathbf u;\mathbf v\otimes w]\bigr).
\]
\end{lemma}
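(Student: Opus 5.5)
The plan is to compute the GIT/Cox quotient \(Q=\cA^\circ/G\) in two stages, one for each factor of the Cox torus. Since \(\operatorname{Pic}(X)=\Z H\oplus\Z E\), we have \(G\cong\mathbb G_m^2\). I will change coordinates on \(\operatorname{Pic}(X)\) to the basis \(\{H-E,\,E\}\), writing \(G=\{(\mu,\nu)\}\) with \(\mu=g(H-E)\) and \(\nu=g(E)\). Then \(g(H)=\mu\nu\), and the torus acts by
\[
(\mathbf u,\mathbf v,w)\mapsto(\mu\nu\,\mathbf u,\ \mu\,\mathbf v,\ \nu\, w).
\]
The stability condition in \(\cA^\circ\) is \(\mathbf v\not\equiv0\) and \((\mathbf u,w)\not\equiv0\). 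This comes from the two maximal-cone types: the rays not in a maximal cone always include some \(\rho_j\) with \(j>s\) or \(\rho_E\), together with some \(\rho_i\) with \(i\le s\) or \(\rho_0\). I will verify this from the fan, or simply quote the description above.

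First, quotient by the \(\mu\)-factor restricted to \(\mathbf v\). The map \((\mathbf u,\mathbf v,w)\mapsto[\mathbf v]\in P_v=\PP(S_{d-k}^{\oplus(r-s)})\) is well defined on \(\cA^\circ\), because \(\mathbf v\ne0\), and it is \(G\)-invariant. Over the open chart of \(P_v\) where a fixed coordinate \(v_\alpha\) of \(\mathbf v\) is nonzero, the \(\mu\)-action can be normalized by setting \(v_\alpha=1\); this slice is étale-locally (in fact Zariski-locally) trivial. The remaining data are \((\mathbf u,w)\) with the residual \(\nu\)-action, twisted by \(\mu\).

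The key step is to replace \((\mathbf u,w)\) by the invariant combination \((\mathbf u,\mathbf v\otimes w)\). Under \((\mu,\nu)\), both \(\mathbf u\) and \(\mathbf v\otimes w\) scale by \(\mu\nu\). So the pair \((\mathbf u,\mathbf v\otimes w)\) spans a line in \(S_d^{\oplus(s+1)}\oplus(\ell\otimes S_k)\), where \(\ell=k\mathbf v\subset S_{d-k}^{\oplus(r-s)}\) is the tautological line at \([\mathbf v]\). This line is nonzero exactly when \((\mathbf u,w)\ne0\), since \(\mathbf v\ne0\) makes \(w\mapsto\mathbf v\otimes w\) injective. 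Globally, \(\ell\) is the fiber of \(\cO_{P_v}(-1)\), so \([\mathbf u;\mathbf v\otimes w]\) is a point of the fiber of \(\PP_{P_v}(\cE^{r,s}_{d,k})\) over \([\mathbf v]\). This defines a \(G\)-invariant morphism \(\cA^\circ\to\PP_{P_v}(\cE^{r,s}_{d,k})\) over \(P_v\).

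To show this morphism is a geometric quotient, I will check it fiberwise and locally, as a principal \(G\)-bundle. On the chart \(v_\alpha\ne0\), the inverse sends \(([\mathbf v],[\mathbf u;\tau])\), with \(\tau=\mathbf v\otimes w\), to the orbit of \((\mathbf u,\mathbf v,w)\). Here \(w\) is recovered as the \(\alpha\)-component of \(\tau\) divided by \(v_\alpha\). Freeness of the action follows because \(\mathbf v\ne0\) pins down \(\mu\) up to the \(\nu\)-scaling, and \((\mathbf u,w)\ne0\) then pins down \(\nu\). Since \(Q\) is, by definition, the geometric quotient, the universal property identifies the two spaces. The main obstacle is carefully verifying that the twist is \(\cO_{P_v}(-1)\) and not \(\cO_{P_v}(1)\), along with the relative-\(\PP\) convention of parametrizing lines. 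I will handle this by checking transition functions between the charts \(v_\alpha\ne0\) and \(v_\beta\ne0\): the coordinate \(w\) rescales by \(v_\beta/v_\alpha\), which matches the tautological subbundle. The hypotheses \(d>k\ge0\) and \(s\le r-2\) ensure that \(S_{d-k}\ne0\) and \(r-s\ge2\), so \(P_v\) and all summands are nondegenerate.
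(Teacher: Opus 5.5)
The paper gives no proof of this lemma; it imports it from Cela--Lian. Your proposal is therefore a self-contained verification rather than a reconstruction of an argument in the text, and as such it is correct.

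\begin{itemize}
\item \textbf{Torus action.} The reparametrization \(G=\{(\mu,\nu)\}\), with weights \(\mu\nu,\mu,\nu\) on \(\mathbf u,\mathbf v,w\), is right.
\item \textbf{The morphism.} Since \(\mathbf v\) is the tautological section of the pulled-back \(\cO_{P_v}(-1)\), the pair \((\mathbf u,\mathbf v\otimes w)\) is a nowhere-vanishing section of the pullback of \(\cE^{r,s}_{d,k}\) to \(\cA^\circ\). It therefore defines a \(G\)-invariant morphism \(\cA^\circ\to\PP_{P_v}(\cE^{r,s}_{d,k})\) over \(P_v\).
\item \textbf{Quotient property.} Over \(\{v_\alpha\ne0\}\), substitute \(w'=v_\alpha w\) and change torus coordinates by \((\mu,\nu)\mapsto(\mu,\mu\nu)\). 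The action becomes the product of the two scaling actions on \(\{v_\alpha\ne0\}\times(\mathbb A^{N+1}\setminus0)\), where \(N+1=(s+1)(d+1)+k+1\). This is the precise form of your slice argument. It shows the map is a Zariski-locally trivial \(G\)-torsor, hence a geometric (so categorical) quotient, hence isomorphic to \(Q\).
\item \textbf{The twist.} Your transition-function check \(v_\beta/v_\alpha\) is exactly what pins down \(\cO_{P_v}(-1)\) rather than \(\cO_{P_v}(1)\) under the lines convention. This matters, since the two resulting projective bundles over \(P_v\) differ unless the two summands have equal rank.
\end{itemize}

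Doing the quotient by hand also gives something the citation does not. The same weights show that \(\mathbf u\), \(\mathbf v\), \(w\) descend to sections of \(\cO_Q(1)\), \(p^*\cO_{P_v}(1)\), and \(\cO_Q(1)\otimes p^*\cO_{P_v}(-1)\). These are the classes \(\zeta_u,\zeta_v,\zeta_u-\zeta_v\) that the paper assigns to the universal sections right after this lemma.

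Two small corrections:
\begin{itemize}
\item Your sentence justifying the stable locus groups \(\rho_E\) with the rays \(\rho_j\), \(j>s\). In fact the complements of the maximal cones of the star subdivision are \(\{\rho_i,\rho_j\}\) with \(i\le s<j\), and \(\{\rho_E,\rho_j\}\) with \(j>s\). So the irrelevant ideal is \((\mathbf v)\cdot(\mathbf u,w)\), and \(\rho_E\) belongs with the \(u\)-rays. Your grouping as written would give \(\mathbf u\ne0\) and \((\mathbf v,w)\ne0\). The condition you actually state and use is the correct one, so nothing downstream is affected.
\item Since \(\mathbf v\) has weight \(\mu\) alone, \(\mathbf v\ne0\) determines \(\mu\) exactly, not up to the \(\nu\)-scaling. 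Freeness then follows as you say.
\end{itemize}
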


Let \(\zeta_u=[\cO_{Q}(1)]\) and \(\zeta_v=[p^*\cO_{P_v}(1)]\) be the tautological and base hyperplane classes in \(\operatorname{Pic}(Q)\).  We use the same symbols for their first Chern classes and their restrictions to \(U\). By the projective-bundle description, the Picard group of \(Q\) is freely generated by \(\zeta_u\) and \(\zeta_v\); for simplicity, denote \(\cO_{Q}(m\zeta_u+n\zeta_v):=\cO_{Q}(m)\otimes p^*\cO_{P_v}(n)\) for integers \(m,n\).

\subsection{Relative normal resolution and cohomology morphisms}

We now derive the two-term resolution of the relative normal bundle \(\cN\) of the universal curve \(F : \PP^1 \times U \to X_{r,s}\). Globally on \(\PP^1 \times Q\), the Cox coordinates correspond to the following universal sections:
\[
\begin{aligned}
  {u_0,\cdots,u_s} &\in H^0\bigl(\PP^1\times Q,
    \cO(d,\zeta_u) \bigr),\\
  {v_{s+1},\cdots,v_r} &\in H^0\bigl(\PP^1\times Q,
    \cO(d-k,\zeta_v)\bigr),\\
  w &\in H^0\bigl(\PP^1\times Q,
    {\cO(k,\zeta_u-\zeta_v)}\bigr).
\end{aligned}
\]
Then Theorem \ref{thm:two-term-resolution-N} applied to \(X_{r,s}\) reduces to the following Corollary.
\begin{corollary}
  \label{cor:two-term-resolution-blowup} For an integral class \(\beta=(d,k)\), the relative normal bundle has a resolution
\[
0\longrightarrow\cE\xrightarrow{\Psi}\cF
\longrightarrow\cN\longrightarrow0\quad\text{on }\PP^1\times U,
\]
with
\[
\cF=\cO(d,\zeta_u)^{\oplus(s+1)}
\oplus\cO(d-k,\zeta_v)^{\oplus(r-s)}
\oplus\cO(k,\zeta_u-\zeta_v).
\]
In the row order \((u_0,\ldots,u_s,v_{s+1},\ldots,v_r,w)\), denote  
\[
A_H= \epsilon_F(H^\vee\otimes1) = (u_i,v_j,0)^T,\qquad A_E = \epsilon_F(E^\vee\otimes1)=(0,-v_j,w)^T.
\]
Then the source \(\cE\) and the matrix \(\Psi\) are given in the following three cases.
\[
\renewcommand{\arraystretch}{1.4}
\begin{array}{|c|c|c|}
\hline
\text{condition in the field \(k\)}&\cE&\text{columns of }\Psi\\ \hline
d\ne0&\cO\oplus\cO(1,0)^{\oplus2}&(-A_E,D_s,D_t)\\
d=0,\ k\ne0&\cO\oplus\cO(1,0)^{\oplus2}&(A_H,D_s,D_t)\\
d=k=0&\cO^{\oplus2}\oplus\cO(2,0)&(A_H,A_E,(H_\rho)_\rho)\\ \hline
\end{array}
\]
In the third case, the section \(H_\rho  \in H^0(\PP^1 \times U,\cO(d_\rho - 2, \zeta_\rho)) \) is given  explicitly as follows. Expand the Cox polynomial \(f_\rho \in \{u_0,\ldots,u_s,v_{s+1},\ldots,v_r,w\}\) with respect to monomials in \(s,t\) as \(f_\rho=\sum_{\nu=0}^{d_\rho}f_\nu s^{d_\rho-\nu}t^\nu\). Then \(H_\rho \) is given by
\begin{equation} 
  \label{eq:H_f-blowup}
H_\rho =
\begin{cases}
-\displaystyle\sum_{\nu=0}^{d_\rho-2}(\nu+1)f_{\nu+1}s^{d_\rho-2-\nu}t^\nu,&d_\rho\ge2,\\
0,&d_\rho=0.
\end{cases}
\end{equation}
\end{corollary}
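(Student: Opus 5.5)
The plan is to specialize Theorem~\ref{thm:two-term-resolution-N} and Remark~\ref{rmk:matrix-Psi} to the fan of \(X_{r,s}\), so the work is bookkeeping of degrees, line bundle classes, and the vector \(\bar\beta\).

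\textbf{Step 1: the bundle \(\cF\).} For each ray I compute \(d_\rho=D_\rho\cdot\beta\) and \(\zeta_\rho\) from the degrees \(\deg u_i=H\), \(\deg v_j=H-E\), \(\deg w=E\), together with \(H\cdot\beta=d\) and \(E\cdot\beta=k\). This gives the degrees \(d\), \(d-k\), \(k\). For \(\zeta_\rho\), I read off the line bundles of which the universal Cox sections are sections, using the quotient map \((\mathbf u,\mathbf v,w)\mapsto([\mathbf v],[\mathbf u;\mathbf v\otimes w])\):
\begin{itemize}
\item \(\mathbf u\) is a tautological coordinate, giving \(\zeta_u\);
\item \(\mathbf v\) is a base coordinate, giving \(\zeta_v\);
\item \(w\) pairs with \(\mathbf v\) to give a tautological coordinate, giving \(\zeta_u-\zeta_v\).
\end{itemize}
Since \(F^*\cO_X(D_\rho)\) pulls back the section \(x_\rho\) to the universal section, this identifies \(\cF\).

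\textbf{Step 2: the map \(\epsilon_F\).} I take \(\operatorname{Pic}(X)=\Z H\oplus\Z E\) with dual basis \(H^\vee,E^\vee\) of \(V\). The formula \(\epsilon_F(\lambda)=(\lambda([D_\rho])u_\rho)_\rho\) then gives
\[
\epsilon_F(H^\vee)=(u_i,v_j,0)^T,\qquad \epsilon_F(E^\vee)=(0,-v_j,w)^T,
\]
which are \(A_H\) and \(A_E\). Note that \(\bar\beta=dH^\vee+kE^\vee\), with \(d,k\) read in \(k\).

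\textbf{Step 3: the case split.}
\begin{itemize}
\item If \(d\neq0\) in \(k\), then \(\bar\beta\neq0\), and \(E^\vee\) spans a complement to \(k\bar\beta\). I take \(\lambda_1=-E^\vee\); the sign is harmless, since rescaling a basis vector of the \(\cO\)-summand is an automorphism of \(\cE\). Remark~\ref{rmk:matrix-Psi} then gives columns \((-A_E,D_s,D_t)\).
\item If \(d=0\) and \(k\ne0\), then \(\bar\beta=kE^\vee\), and \(H^\vee\) spans a complement, giving \((A_H,D_s,D_t)\).
\item If \(d=k=0\) in \(k\), then \(\bar\beta=0\). The second case of Remark~\ref{rmk:matrix-Psi} applies with basis \(H^\vee,E^\vee\), giving \(\cE\cong\cO^{\oplus2}\oplus\cO(2,0)\) and columns \((A_H,A_E,(H_\rho))\).
\end{itemize}

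\textbf{Step 4: the formula for \(H_\rho\).} I verify that the displayed \(H_\rho\) satisfies \(\partial_t f_\rho=-sH_\rho\). Expanding gives
\[
\partial_t f_\rho=\sum_{\nu}(\nu+1)f_{\nu+1}s^{d_\rho-1-\nu}t^{\nu},
\]
which matches after factoring out \(s\). For the other relation \(\partial_s f_\rho=tH_\rho\), I use Euler's identity \(s\partial_s f+t\partial_t f=d_\rho f=0\), valid because \(d_\rho\equiv0\) in \(k\) in this case (each \(d_\rho\in\{d,d-k,k\}\) vanishes in \(k\)). This yields \(s\partial_s f=stH_\rho\). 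Since \(\cO(d_\rho-1,\zeta_\rho)\to\cO(d_\rho,\zeta_\rho)\) given by multiplication by \(s\) is injective, \(\partial_s f=tH_\rho\). Uniqueness then follows from the same injectivity. The case \(d_\rho=0\) is trivial, since both derivatives vanish.

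\textbf{Main obstacle.} The only delicate step is the characteristic bookkeeping. The case distinctions are made on the images of \(d,k\) in the field rather than as integers. The vanishing of \(d_\rho\) in \(k\) is exactly what makes Euler's identity produce \(H_\rho\) in positive characteristic, where \(d_\rho\ge2\) can still hold.
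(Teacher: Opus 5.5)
Your proposal is correct and follows the paper's proof essentially step for step. You specialize Theorem~\ref{thm:two-term-resolution-N} and Remark~\ref{rmk:matrix-Psi} with $\bar\beta=dH^\vee+kE^\vee$, choose the complement $-E^\vee$ or $H^\vee$ (or the basis $H^\vee,E^\vee$ when $\bar\beta=0$), and check the defining relations of $H_\rho$ using that every $d_\rho$ vanishes in $k$; using Euler's identity for $\partial_s f_\rho$, where the paper differentiates directly, is an inessential difference.

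One small precision in Step~4. The expansion of $\partial_t f_\rho$ runs up to $\nu=d_\rho-1$, and its top term $d_\rho f_{d_\rho}t^{d_\rho-1}$ is not divisible by $s$ unless it vanishes. So $\partial_t f_\rho=-sH_\rho$ already requires $d_\rho=0$ in $k$; it is not just a matter of factoring out $s$.
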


\begin{proof}
We apply Theorem~\ref{thm:two-term-resolution-N} to \(X=X_{r,s}\). Here \(m=2\), and in the dual basis to \(H,E\), we have \(\bar\beta=dH^\vee+kE^\vee\). The direct sum defining \(\cF\) has \(s+1\) summands \(\cO(d,\zeta_u)\) indexed by \(u_0,\ldots,u_s\), \(r-s\) summands \(\cO(d-k,\zeta_v)\) indexed by \(v_{s+1},\ldots,v_r\), and one summand \(\cO(k,\zeta_u-\zeta_v)\) indexed by \(w\). Therefore, \(\cF\) is given as above.

To identify the source \(\cE\) and the morphism \(\Psi\), recall that \(\Psi\) is induced by \(\widetilde{\Psi} = \epsilon_F + J\). Evaluating on the Cox divisor classes in the specified row order, \(\epsilon_F\) acts as
\[
\epsilon_F(aH^\vee+bE^\vee)
=\bigl(a u_i,(a-b)v_j,bw\bigr)^T,
\]
so the images of \(H^\vee,E^\vee\) are exactly \(A_H,A_E\). Similarly, the Jacobian \(J = (D_s, D_t)\) is given by
\[
D_s=(\partial_su_i,\partial_sv_j,\partial_sw)^T,
\qquad
D_t=(\partial_tu_i,\partial_tv_j,\partial_tw)^T.
\]

If \(d\ne0\) in the base field, then \(-E^\vee\) spans a complement to \(k\bar\beta\) in \(V\); Remark~\ref{rmk:matrix-Psi} therefore gives \(\cE\cong\cO\oplus\cO(1,0)^{\oplus2}\) with columns \((-A_E,D_s,D_t)\). If \(d=0\) but \(k\ne0\), choose the complement spanned by \(H^\vee\) instead, giving the same source and columns \((A_H,D_s,D_t)\).

If \(d=k=0\) in the base field, then \(\bar\beta=0\), so Theorem~\ref{thm:two-term-resolution-N} gives \(\cE\cong\cO^{\oplus2}\oplus\cO(2,0)\). Using \(H^\vee,E^\vee\) as the basis of \(V\), Remark~\ref{rmk:matrix-Psi} gives columns \(A_H,A_E,(H_\rho)_\rho\). Since every \(d_\rho\in\{d,d-k,k\}\) vanishes in the base field, differentiation of the monomial expansion shows that \eqref{eq:H_f-blowup} satisfies
\[
\partial_sf_\rho=tH_\rho,
\qquad \partial_tf_\rho=-sH_\rho.
\]
It is therefore precisely the column induced by the Jacobian on \(\cO(2,0)\) in that remark.
\end{proof}



Next, we identify the relative cohomology morphisms. Twist the relative normal sequence by \(\pi_1^*L_c\). Then
\[
\begin{aligned}
\cE(-c-1,0)&\cong
\begin{cases}
\cO(-c-1,0)\oplus\cO(-c,0)^{\oplus2},&(d,k)\ne(0,0)\text{ in }k^2,\\
\cO(-c-1,0)^{\oplus2}\oplus\cO(1-c,0),&(d,k)=(0,0)\text{ in }k^2,
\end{cases}\\
\cF(-c-1,0)&=\cO(d-c-1,\zeta_u)^{\oplus(s+1)}
\oplus\cO(d-k-c-1,\zeta_v)^{\oplus(r-s)}
\oplus\cO(k-c-1,\zeta_u-\zeta_v).
\end{aligned}
\]
 The map uses the corresponding Euler columns from Corollary~\ref{cor:two-term-resolution-blowup}, including \(A_H\) when \(d=0\) but \(k\ne0\) in the field.
For \(i=0,1\), the induced morphisms \(\Theta_{L_c}^i\) in Lemma--Definition~\ref{thm:universal-pushforward} take the form
\[
  \Theta_{L_c}^i\colon
  R^i(\pi_2)_*\cE(-c-1,0)
  \longrightarrow
  R^i(\pi_2)_*\cF(-c-1,0).
\]

\subsection{General unbalancedness}
\label{subsec:blowup-unbalanced-criterion}

We now apply Theorem~\ref{thm:toric-unbalanced-criterion} to obtain a sufficient criterion for unbalancedness of the general normal bundle. Proposition~\ref{prop:blowup-cela-lian-equivalence} below establishes the equivalence of our numerical criterion with that of Cela--Lian \cite[Theorem~1.4.2]{CelaLian2026}. In particular, under the additional hypotheses \(p\ne2\), \(d\ge r\), and \(d-k\ge r-s-1\),  \cite[Theorem~1.4.2]{CelaLian2026} shows that the condition in Theorem~\ref{thm:toric-unbalanced-criterion} is sharp: it is equivalent to unbalancedness of the general normal bundle.

Take \(A = H-E\) and \(D = E\); then they satisfy the conditions for Proposition~\ref{prop:optimal-Wpi-projective-bundle}, and the morphism \(\pi : X \to \PP^{r-s-1}\) is the morphism induced by \(|H-E|\).
The subspace \(W_\pi\) is given by
 \[W_\pi = \ker \ev_{H-E} \cong  k(H^\vee+E^\vee).\]
By  Proposition~\ref{prop:optimal-Wpi-projective-bundle},  it suffices to check Condition \eqref{eq:toric-unbalanced-criterion} with $W=W_\pi$.

\begin{proposition}
\label{prop:blowup-cela-lian-equivalence}
Let $r,s,d,k$ be integers with $0\le s\le r-2$ and $d>k\ge0$, and let $W_\pi$ be as above.
The following conditions are equivalent:
\begin{enumerate}[(i)]
\item There exists $c\ge2$ such that
\[
\delta_{W_\pi}(c)>\max\{\delta_{\mathrm{exp}}(c),0\}.
\]
\item We have $s<r-2$ and
\[
2d<(r-s)(k-1)+4+(r-s-2)\left\lceil\frac{k}{s+1}\right\rceil.
\]
\end{enumerate}
\end{proposition}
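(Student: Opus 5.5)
The plan is to compute both sides of the inequality in (i) explicitly from the ray data of \(X_{r,s}\) and reduce (i) to an elementary inequality in \(c\). The rays split into three kinds. There are \(s+1\) rays \(u_i\) with \(d_\rho=d\) and class \(H\), \(r-s\) rays \(v_j\) with \(d_\rho=d-k\) and class \(H-E\), and one ray \(w\) with \(d_\rho=k\) and class \(E\). Here \(m=2\), so Lemma~\ref{lem:expected-rank-difference} gives
\[
\delta_{\mathrm{exp}}(c)=3c-2-(s+1)\max\{c-d,0\}-(r-s)\max\{c-d+k,0\}-\max\{c-k,0\}.
\]
The functional \(H^\vee+E^\vee\) spanning \(W_\pi\) takes the value \(1\) on \(H\), \(0\) on \(H-E\), and \(1\) on \(E\). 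Hence \(I(W_\pi)\) consists of the \(u_i\) and \(w\), and
\[
\delta_{W_\pi}(c)=c-(s+1)\max\{c-d,0\}-\max\{c-k,0\}.
\]
Subtracting, the condition \(\delta_{W_\pi}(c)>\delta_{\mathrm{exp}}(c)\) becomes \((r-s)\max\{c-(d-k),0\}>2c-2\). So (i) holds if and only if some \(c\ge2\) satisfies both (a) \(\delta_{W_\pi}(c)>0\) and (b) \((r-s)\max\{c-(d-k),0\}>2c-2\).

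Next I would solve (a) by splitting into the ranges \(c\le k\), \(k<c\le d\), and \(c>d\), using \(d>k\).
\begin{itemize}
\item For \(c\le k\), \(\delta_{W_\pi}(c)=c>0\).
\item For \(k<c\le d\), \(\delta_{W_\pi}(c)=k\), which is positive iff \(k\ge1\).
\item For \(c>d\), \(\delta_{W_\pi}(c)=k-(s+1)(c-d)\), which is positive iff \(c<d+k/(s+1)\).
\end{itemize}
The conclusion is that for \(k\ge1\), (a) holds exactly for \(2\le c\le c_{\max}:=d+\lceil k/(s+1)\rceil-1\), and that range is nonempty since \(c_{\max}\ge d\ge2\). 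For \(k=0\), (a) has no solution \(c\ge2\). In that case (ii) also fails, because \(d\ge1\) gives \(2d\ge2\), while the right-hand side of (ii) equals \(4-(r-s)\le2\). So the case \(k=0\) is consistent, and from now on \(k\ge1\).

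For (b), the left side vanishes when \(c\le d-k\) while \(2c-2>0\), so any solution has \(c>d-k\). There (b) reads \((r-s-2)c>(r-s)(d-k)-2\).
\begin{itemize}
\item If \(s=r-2\), this says \(0>2(d-k)-2\), which is impossible since \(d-k\ge1\). This yields the requirement \(s<r-2\).
\item If \(s<r-2\), the inequality is monotone increasing in \(c\). Since \(c_{\max}\ge d>d-k\), a \(c\) satisfying both (a) and (b) exists iff (b) holds at \(c=c_{\max}\).
\end{itemize}
Expanding \((r-s-2)\bigl(d+\lceil k/(s+1)\rceil-1\bigr)>(r-s)(d-k)-2\) and collecting the \(d\) terms gives precisely \(2d<(r-s)(k-1)+4+(r-s-2)\lceil k/(s+1)\rceil\). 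This proves (i)\(\Leftrightarrow\)(ii).

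The main obstacle is bookkeeping rather than an idea. One must make sure the maximizing choice \(c=c_{\max}\) simultaneously satisfies \(c\ge2\), \(c>d-k\), and (a). One must also handle the degenerate cases \(k=0\) and \(s=r-2\) separately, as above, so that the biconditional holds in both directions.
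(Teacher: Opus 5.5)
Your proof is correct and follows essentially the same route as the paper. You compute $\delta_{W_\pi}$ and $\delta_{\mathrm{exp}}$ from the three ray types, reduce $\delta_{W_\pi}(c)>\delta_{\mathrm{exp}}(c)$ to $(r-s-2)c>(r-s)(d-k)-2$, dispose of the cases $k=0$ and $s=r-2$, and test the linear inequality at the largest admissible value $c=d+\lceil k/(s+1)\rceil-1$. The differences are only cosmetic: you describe the full solution interval of $\delta_{W_\pi}(c)>0$ instead of first reducing to $c\ge d$ by monotonicity, and you spell out why the range $c\le d-k$ contributes nothing, which the paper leaves implicit.
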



\begin{proof}
    We organize Condition (i) as follows. By definition, we have
\[
\begin{aligned}
\delta_{W_\pi}(c)
&=c-(s+1)\max\{c-d,0\}-\max\{c-k,0\},\\
\delta_{\mathrm{exp}}(c)
&=3c-2-(s+1)\max\{c-d,0\} -(r-s)\max\{c-d+k,0\}-\max\{c-k,0\}.
\end{aligned}
\]
For $c\ge2$, the inequality $\delta_{W_\pi}(c)>\delta_{\mathrm{exp}}(c)$ is equivalent to
\begin{equation}
\label{eq:blowup-rank-inequality}
(r-s-2)c>(r-s)(d-k)-2.
\end{equation}
We further reduce the argument so that we may restrict to $c\ge\max\{2,d\}$.
Indeed, if (i) holds for some $2\le c<d$, then $\delta_{W_\pi}(d)\ge\delta_{W_\pi}(c)>0$, so $d$ also satisfies (i).
For $c\ge\max\{2,d\}$, we have
\[
\delta_{W_\pi}(c)=k-(s+1)(c-d).
\]
Therefore, Condition (i) is equivalent to the existence of $c\ge\max\{2,d\}$ satisfying $k-(s+1)(c-d)>0$ and \eqref{eq:blowup-rank-inequality}. We show the equivalence of the two conditions by dividing into the following three cases.

\begin{itemize}
\setlength{\labelwidth}{\dimexpr\leftmargin-\labelsep\relax}
\renewcommand{\makelabel}[1]{\upshape#1\hfil}

\item[\textbf{Case \(k=0\).}]
We have $\delta_{W_\pi}(c)\le0$, while the right-hand side of the inequality in (ii) is $4-(r-s)\le2\le2d$, so neither condition holds.

\item[\textbf{Case \(s=r-2\).}]
Inequality~\eqref{eq:blowup-rank-inequality} becomes $0>2(d-k)-2\ge0$, which is impossible, so (i) fails.
Condition (ii) also fails because it requires $s<r-2$.

\item[\textbf{Case \(k>0\) and \(s<r-2\).}]
Here $d\ge2$, so we have \(c \geq d\). The inequality $k-(s+1)(c-d)>0$ is equivalent to
\[
c<d+\frac{k}{s+1},
\qquad\text{or equivalently}\qquad
c\le c_*:=d+\left\lceil\frac{k}{s+1}\right\rceil-1.
\]
Since $c_*\ge d$ and $r-s-2>0$, condition (i) holds if and only if \eqref{eq:blowup-rank-inequality} holds at $c=c_*$.
Substituting this value and rearranging, we obtain
\[
\begin{aligned}
&(r-s-2)c_*> (r-s)(d-k)-2
\quad\Longleftrightarrow\quad
2d<(r-s)(k-1)+4+(r-s-2)\left\lceil\frac{k}{s+1}\right\rceil,
\end{aligned}
\]
which proves the equivalence.
\end{itemize}
\end{proof}

Therefore, by  \cite{CelaLian2026}, the condition is equivalent to the unbalancedness of the normal bundle of a general curve in the family under the additional hypotheses $p\ne2$, $d\ge r$, and $d-k\ge r-s-1$. 

\begin{remark}
\label{rmk:converse-rank-deficiency}
The converse to Lemma~\ref{lem:rank-deficiency-implies-unbalancedness} depends on the $H^0$ terms in the normal resolution.
Fix $q\in U$ and $c\ge2$.
Since $H^0(\PP^1,\cE_q\otimes L_c)=0$, the cohomology sequence of \eqref{eq:two-term-resolution-N} gives
\[
0\longrightarrow H^0(\PP^1,\cF_q\otimes L_c)
\longrightarrow H^0(\PP^1,N_{f_q}\otimes L_c)
\longrightarrow\ker(\left.\Theta_{L_c}^1\right|_q)\longrightarrow0,
\]
\[
H^1(\PP^1,N_{f_q}\otimes L_c)
\cong\operatorname{coker}(\left.\Theta_{L_c}^1\right|_q).
\]
If $c\ge\max\{2,\max_{\rho\in\Sigma(1)}d_\rho\}$, then $H^0(\PP^1,\cF_q\otimes L_c)=0$, so
\[
\left.\Theta_{L_c}^1\right|_q\text{ fails to have maximal rank}
\quad\Longleftrightarrow\quad
\begin{cases}
h^0(\PP^1,N_{f_q}\otimes L_c)>0,\\
h^1(\PP^1,N_{f_q}\otimes L_c)>0.
\end{cases}
\]
A bundle on $\PP^1$ is unbalanced if and only if some twist has both cohomology groups nonzero.
Thus unbalancedness implies failure of maximal rank of $\left.\Theta_{L_c}^1\right|_q$ whenever such a twist can be chosen with $c$ in the stated range.
Without the vanishing of $H^0(\PP^1,\cF_q\otimes L_c)$, the nonzero sections of $N_{f_q}\otimes L_c$ may all come from $\cF_q\otimes L_c$ and give no kernel of $\left.\Theta_{L_c}^1\right|_q$.

The numerical criterion in Theorem~\ref{thm:toric-unbalanced-criterion} need not detect every failure of maximal rank, since its proof only gives the lower bounds
\[
\dim\ker(\left.\Theta_{L_c}^1\right|_q)
\ge\dim\ker(\left.A_W\right|_q)\ge\delta_W(c).
\]
For example, in characteristic zero a smooth conic $C\subset\PP^3$ has $N_{C/\PP^3}\cong\cO(4)\oplus\cO(2)$, as follows from the normal sequence for its containing plane and $H^1(\PP^1,\cO(2))=0$.
Here $V$ is one-dimensional and all four Cox divisor degrees equal $2$, so
\[
\begin{aligned}
\delta_V(c)&=c-4\max\{c-2,0\},\\
\delta_{\mathrm{exp}}(c)&=2c-2-4\max\{c-2,0\}.
\end{aligned}
\]
Thus $\delta_V(c)=\delta_{\mathrm{exp}}(c)-c+2\le\delta_{\mathrm{exp}}(c)$ for $c\ge2$, and neither $W=0$ nor $W=V$ satisfies \eqref{eq:toric-unbalanced-criterion}.
Nevertheless, at $c=3$ the bundle $N_{C/\PP^3}(-4)\cong\cO\oplus\cO(-2)$ has $h^0=h^1=1$, so $\left.\Theta_{L_3}^1\right|_q$ has rank $3$ in a $4\times4$ presentation.
Hence a converse to the numerical criterion requires an additional argument establishing generic balancedness whenever the criterion fails.
\end{remark}

\subsection{Classes of jumping loci}
We now apply the determinantal description of the jumping loci given in Section~\ref{subsec:jumping-divisor-class-general} to study how the splitting type of the normal bundle varies within a fixed curve class on \(X_{r,s}\). Corollary~\ref{cor:first-cohomology-jump-class} applied to \(X_{r,s}\) gives Corollary~\ref{cor:blowup-expected-jump-class}, which computes the expected classes of the cohomology-jump loci in terms of \(\zeta_u\) and \(\zeta_v\). As an example,  we compute the first jumping locus for the case \((r,s,d,k)=(3,1,4,2)\); we also exhibit a smooth point in the first jumping locus in every characteristic, which shows that the first jumping locus is nonempty with expected codimension.

Specializing Corollary~\ref{cor:first-cohomology-jump-class} to \(X_{r,s}\), we obtain  the following result.
\begin{corollary}
\label{cor:blowup-expected-jump-class}
For the family of class \((d,k)\) on \(X_{r,s}\), fix \(c\ge2\), and set
\[
a_u=(s+1)\max\{c-d,0\},\qquad
a_v=(r-s)\max\{c-d+k,0\},\qquad
a_w=\max\{c-k,0\}.
\]
Write
\[
C(z)=(1+\zeta_u z)^{a_u}(1+\zeta_v z)^{a_v}
\bigl(1+(\zeta_u-\zeta_v)z\bigr)^{a_w}.
\]
Assume \(r_{L_c}=\min\{3c-2,a_u+a_v+a_w\}>0\), and set
\[
\kappa=\bigl|a_u+a_v+a_w-(3c-2)\bigr|+1.
\]
If \(J_{L_c}^1\) is nonempty and has pure codimension \(\kappa\), its class is
\[
[J_{L_c}^1]=
\begin{cases}
[C(z)]_\kappa,&3c-2\le a_u+a_v+a_w,\\[2pt]
\bigl[C(-z)^{-1}\bigr]_\kappa,&3c-2>a_u+a_v+a_w,
\end{cases}
\qquad\text{in }A^\kappa(U).
\]
\end{corollary}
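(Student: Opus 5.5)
This is a direct specialization of Corollary~\ref{cor:first-cohomology-jump-class}. The work is to identify the ranks \(e,f\) and the Chern polynomial \(C(z)\) for \(X_{r,s}\) and the class \(\beta=(d,k)\).

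\textbf{Source rank.} We take \(e\) from Lemma--Definition~\ref{thm:universal-pushforward} with \(m=2\).

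\begin{itemize}
\item If \(\bar\beta\ne0\) in \(V\), the source is \(H^1(\cO(-c-1))\oplus H^1(\cO(-c))^{\oplus2}\). For \(c\ge2\) this has dimension \(c+2(c-1)=3c-2\).
\item If \(\bar\beta=0\), i.e.\ \(d=k=0\) in the field, the source is \(H^1(\cO(-c-1))^{\oplus2}\oplus H^1(\cO(1-c))\). This has dimension \(2c+(c-2)=3c-2\) as well.
\end{itemize}

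So \(e=3c-2\) uniformly, in every characteristic, consistent with Lemma~\ref{lem:expected-rank-difference}, whose first term is \((m+1)c-2\).

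\textbf{Target rank and Chern polynomial.} From Corollary~\ref{cor:two-term-resolution-blowup}, \(\cF\) has
\begin{itemize}
\item \(s+1\) summands \(\cO(d,\zeta_u)\),
\item \(r-s\) summands \(\cO(d-k,\zeta_v)\),
\item one summand \(\cO(k,\zeta_u-\zeta_v)\).
\end{itemize}
Using \(h^1(\cO(a-c-1))=\max\{c-a,0\}\), the target \(R^1\pi_{2*}(\cF\otimes\pi_1^*L_c)\) is
\[
\cO_U(\zeta_u)^{\oplus a_u}\oplus\cO_U(\zeta_v)^{\oplus a_v}\oplus\cO_U(\zeta_u-\zeta_v)^{\oplus a_w}.
\]
Hence \(f=a_u+a_v+a_w\). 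The polynomial \(C(z)\) of Proposition~\ref{prop:cohomology-jump-class}, a product over rays, collects into exactly the stated product of three powers. Here \(\zeta_\rho\) is read off from the universal sections on \(\PP^1\times Q\) listed before Corollary~\ref{cor:two-term-resolution-blowup}, restricted to \(U\).

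\textbf{Conclusion.} The hypothesis \(r_{L_c}=\min\{3c-2,a_u+a_v+a_w\}>0\) is precisely \(r_{L_c}=\min\{e,f\}>0\). Corollary~\ref{cor:first-cohomology-jump-class} then gives \(\kappa=|f-e|+1\), and the class is \([C(z)]_\kappa\) when \(e\le f\) and \([C(-z)^{-1}]_\kappa\) when \(e>f\), as claimed.

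The only point needing care is the source rank in the degenerate case \(d\equiv k\equiv0\) in the field. There \(\cE\) has a different splitting, so I would check explicitly that the count still gives \(3c-2\) for \(c\ge2\). The computation above shows that it does.
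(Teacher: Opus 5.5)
Your proposal is correct and matches the paper's proof: with \(m=2\), you read off \(e=3c-2\) and \(f=a_u+a_v+a_w\) from the decompositions of \(\cE\) and \(\cF\), group the ray factors of \(C(z)\) into the three stated powers, and apply Corollary~\ref{cor:first-cohomology-jump-class}. Your explicit check that \(e=3c-2\) also holds when \(d\equiv k\equiv 0\) in the field is sound; the paper leaves it implicit, as it is already covered by the count \((m+1)c-2\) in Lemma~\ref{lem:expected-rank-difference}.
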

\begin{proof}
Here \(m=2\), and the pairs \((d_\rho,\zeta_\rho)\) are \((d,\zeta_u)\), \((d-k,\zeta_v)\), and \((k,\zeta_u-\zeta_v)\), with multiplicities \(s+1\), \(r-s\), and \(1\), respectively. Thus the source and target of \(\Theta_{L_c}^1\) have ranks
\(
e=3c-2\) and \(f=a_u+a_v+a_w\). The rest follows from Corollary~\ref{cor:first-cohomology-jump-class}.
\end{proof}

As a concrete example, we consider \((r,s,d,k)=(3,1,4,2)\). Proposition~\ref{prop:small-smooth-witness} establishes in every characteristic that the general normal bundle is balanced and the first jumping locus is a nonempty divisor with a smooth point; its class \(8\zeta_u+4\zeta_v\) is computed using Corollary~\ref{cor:blowup-expected-jump-class}.
\begin{proposition}
\label{prop:small-smooth-witness}
For \((r,s,d,k)=(3,1,4,2)\), in any characteristic, the general normal bundle is \(\cO(6)^{\oplus2}\), and the first jumping locus \(\cJ=J_{L_6}^1\) is a nonempty effective Cartier divisor of class \(8\zeta_u+4\zeta_v\) in \(A^1(U)\). In the Cox order \((u_0,u_1,v_2,v_3,w)\), the point
\[
q_0=(s^4-t^4,\ s^2t^2,\ t^2,\ s^2-st,\ st)
\]
is a smooth point of \(\cJ\) with \(N_{q_0}\cong\cO(5)\oplus\cO(7)\).
\end{proposition}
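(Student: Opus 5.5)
The plan is to reduce everything to the single square relative cohomology morphism $\Theta_{L_6}^1$ and then do one explicit computation at $q_0$.

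\textbf{Numerics.} For $(r,s,d,k)=(3,1,4,2)$ we have $n=3$. The Cox degrees are $d_\rho=4,4,2,2,2$ for $(u_0,u_1,v_2,v_3,w)$. Hence $D=\sum d_\rho-2=12$, and the balanced type is $\cO(6)^{\oplus 2}$, so $c_*=6$. Take $c=6$, i.e.\ $L_6=\cO(-7)$. In Corollary~\ref{cor:blowup-expected-jump-class} this gives $a_u=4$, $a_v=8$, $a_w=4$. Thus $e=3c-2=16=f$, and $\Theta_{L_6}^1$ is a $16\times 16$ matrix. Its determinant is a section of
\[
\det F_{L_6}\otimes\det E_{L_6}^{\vee}\cong\cO_U(4\zeta_u+8\zeta_v+4(\zeta_u-\zeta_v))=\cO_U(8\zeta_u+4\zeta_v).
\]

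\textbf{Identifying $\cJ$.} Since $c=6\ge\max\{2,\max_\rho d_\rho\}=4$, Remark~\ref{rmk:converse-rank-deficiency} says $\Theta_{L_6}^1|_q$ is singular iff $h^0(N_q(-7))>0$ and $h^1(N_q(-7))>0$. For a rank-two bundle of degree $12$, this holds iff $N_q\cong\cO(a)\oplus\cO(12-a)$ with $a\le 5$, i.e.\ iff $N_q$ is unbalanced. So set-theoretically $\{\det\Theta_{L_6}^1=0\}$ is the unbalanced locus.

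Once we know $\det\not\equiv 0$, the generic rank is $r_{L_6}=16$ and the general normal bundle is $\cO(6)^{\oplus2}$. Then $\cJ$ is the unbalanced locus and equals $J_{L_6}^1$, in agreement with Lemma~\ref{lem:jumping-generic-rank}. It is the zero scheme of a nonzero section of $\cO_U(8\zeta_u+4\zeta_v)$ on the integral scheme $U$, hence an effective Cartier divisor of the stated class. This is also the $\kappa=1$ case of Corollary~\ref{cor:blowup-expected-jump-class}.

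\textbf{Computation at $q_0$.} All remaining claims reduce to $q_0$, because a section whose differential is nonzero at a zero is not identically zero. So it suffices to prove that $q_0\in U$, that $\det\Theta_{L_6}^1(q_0)=0$, and that $d(\det)$ is nonzero at $q_0$. Nonemptiness, generic balancedness and smoothness of $\cJ$ at $q_0$ then all follow at once. The steps are:

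\begin{enumerate}[(a)]
\item Check base-point-freeness at $q_0$. The pair $(v_2,v_3)=(t^2,s^2-st)$ has no common zero. The triple $(u_0,u_1,w)$ could only vanish simultaneously where $st=0$, and there $u_0=s^4-t^4\ne 0$.
\item Check unramifiedness via Remark~\ref{rmk:unramified-locus-rank-condition}. The $5\times 3$ matrix $\Psi_{q_0}(x)$ must have rank $3$ at every $x\in\PP^1$. I would verify this by computing its $3\times 3$ minors and showing they have no common zero.
\item Show $N_{q_0}\cong\cO(5)\oplus\cO(7)$ by computing the kernel and cokernel of the explicit $16\times16$ matrix of $\Theta_{L_6}^1|_{q_0}$, built in monomial bases from Remark~\ref{rmk:matrix-Psi}. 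The target is $h^1(N_{q_0}(-7))=1$, i.e.\ corank exactly $1$; this forces splitting type $(5,7)$ rather than something more unbalanced.
\item At a corank-one point, $d(\det)=\operatorname{tr}(\operatorname{adj}\Theta\cdot d\Theta)$ and $\operatorname{adj}\Theta$ is a nonzero multiple of $w\,v^{T}$, so $d(\det)$ pairs a tangent vector to $\pm v^{T}(d\Theta)\,w$. Here $w$ spans the kernel and $v^T$ the cokernel. The entries of $\Theta$ are linear in the Cox coefficients, so $d\Theta$ along a Cox-coefficient direction is just the corresponding coefficient matrix. It therefore suffices to find one Cox-coefficient direction, transverse to the torus orbit, with $v^T(d\Theta)w=\pm1$, or more generally a unit in every characteristic.
\end{enumerate}

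\textbf{Main obstacle: characteristic uniformity.} The shape of $\Psi$ changes with $\operatorname{char}k$ (Corollary~\ref{cor:two-term-resolution-blowup}). In characteristic $2$ we have $d=k=0$ in $k$, so $\cE=\cO^{\oplus2}\oplus\cO(2,0)$ and $\Psi$ has columns $(A_H,A_E,(H_\rho)_\rho)$ from \eqref{eq:H_f-blowup}. Otherwise $d\ne 0$ in $k$ and $\Psi$ has columns $(-A_E,D_s,D_t)$; $\operatorname{char}k=3$ falls in this case. I would carry out the computation over $\Z$ in both presentations. The goal is to choose $v$, $w$ and the tangent direction with integer entries, with the relevant pairing equal to $\pm1$ and the rank-$15$ minors involving a $\pm1$ pivot. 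The sparse shape of $q_0$ (mostly monomials with coefficients $\pm1$) should make this feasible; the explicit matrices would go in Appendix~\ref{app:theta-q-computation}.
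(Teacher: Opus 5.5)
Your proposal is correct and follows essentially the paper's route: you identify $\cJ$ with the zero scheme of $\det\Theta_{L_6}^1$, a section of $\cO_U(8\zeta_u+4\zeta_v)$, check $q_0\in U$ through $3\times 3$ minors of $\Psi_{q_0}$, and verify corank one together with a nonvanishing first-order variation of the determinant at $q_0$, in both presentations of $\Psi$. The only difference is cosmetic: where you use Jacobi's formula $v^{T}(d\Theta)w$, the paper computes $\det M_\varepsilon$ directly along the Cox-coefficient deformation $q_\varepsilon$ (adding $\varepsilon s^4$ to $u_1$), obtaining $4096\varepsilon(1-4\varepsilon)$ over $\Z[1/2]$ and $\varepsilon$ in characteristic $2$; so for $p\neq 2$ you should expect powers of $2$ (units in $\Z[1/2]$) rather than literal $\pm1$'s.
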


\begin{proof}
Corollary~\ref{cor:two-term-resolution-blowup} and Lemma--Definition~\ref{thm:universal-pushforward} give \(\operatorname{rk}N_q=2\), \(\deg N_q=12\), and a \(16\times16\) matrix for \(\Theta_{L_6}^1\) satisfying
\[
h^0(N_q\otimes L_6)=\dim\ker(\left.\Theta_{L_6}^1\right|_q)
=16-\operatorname{rank}(\left.\Theta_{L_6}^1\right|_q).
\]
Thus ranks \(16\) and \(15\) correspond respectively to \(\cO(6)^{\oplus2}\) and \(\cO(5)\oplus\cO(7)\).

The coprimeness conditions \(\gcd(v_2,v_3)=\gcd(u_0,u_1,w)=1\) make \(q_0\) an honest map. In the bases of Corollary~\ref{cor:two-term-resolution-blowup}, the minors of \(\Psi_{q_0}\) on rows \((u_1,v_2,w)\), \((u_0,v_2,w)\), and \((u_0,v_3,w)\), evaluated respectively on \(D(st)\), at \(s=0\), and at \(t=0\), are
\[
\begin{cases}
(-4s^2t^4,\,-4t^6,\,-4s^6),&p\ne2,\\
(s^2t^4,\,t^6,\,s^6),&p=2.
\end{cases}
\]
Hence \(q_0\in U\) by Remark~\ref{rmk:unramified-locus-rank-condition}. By openness, the deformation
\begin{equation}
\label{eq:quartic-deformation-family}
q_\varepsilon=(s^4-t^4,\ s^2t^2+\varepsilon s^4,\ t^2,\ s^2-st,\ st)
\end{equation}
lies in \(U\) near \(\varepsilon=0\). Appendices~\ref{app:theta-q0-computation} and~\ref{app:quartic-char-two} show that \(\Theta_{L_6}^1\) has rank \(15\) at \(q_0\) and its determinant has a simple zero along this deformation at \(0\), in every characteristic.

Consequently, the asserted splitting types follow, and Lemma~\ref{lem:jumping-generic-rank} identifies \(\cJ=J_{L_6}^1=V(\det\Theta_{L_6}^1)\). Since \(U\) is smooth and irreducible, this is a nonempty effective Cartier divisor, smooth at \(q_0\). Corollary~\ref{cor:blowup-expected-jump-class}, with \((a_u,a_v,a_w)=(4,8,4)\) and \(\kappa=1\), gives \([\cJ]=4\zeta_u+8\zeta_v+4(\zeta_u-\zeta_v)=8\zeta_u+4\zeta_v\).
\end{proof}

Near \(q_0\), a nonzero \(15\times15\) minor remains nonzero, so the rank is exactly \(15\) along \(\cJ\). Thus \(\cJ\) coincides there with the splitting stratum \(\cJ_{(-1,1)}\), realizing its expected codimension \(u(\cO(5)\oplus\cO(7))=1\) given in Remark~\ref{rmk:deformation-expected-codimension}.

\appendix

\section{The matrix form of \(\left.\Theta_{L_c}^1\right|_q\) in the case \(X = \operatorname{Bl}_{\PP^s}\PP^r\)}
\label{app:theta-q-computation}

Fix a Cox representative \(q=(u_0,\ldots,u_s,v_{s+1},\ldots,v_r,w)\) of class \((d,k)\). Corollary~\ref{cor:two-term-resolution-blowup} gives
\[
\begin{aligned}
\left.\Theta_{L_c}^1\right|_q:\ H^1(\cE_q(-c-1))\longrightarrow{}&
H^1(\cO(d-c-1))^{\oplus(s+1)}\\
&\oplus H^1(\cO(d-k-c-1))^{\oplus(r-s)}
\oplus H^1(\cO(k-c-1)),
\end{aligned}
\]
where all cohomology is on \(\PP^1\), and the source is
\[
H^1(\cE_q(-c-1))\cong
\begin{cases}
H^1(\cO(-c-1))\oplus H^1(\cO(-c))^{\oplus2},&\bar\beta\ne0,\\
H^1(\cO(-c-1))^{\oplus2}\oplus H^1(\cO(1-c)),&\bar\beta=0.
\end{cases}
\]
On each nonzero \(H^1(\cO(-m-1))\), use the basis Serre-dual to \((s^{m-1},s^{m-2}t,\ldots,t^{m-1})\). Order source blocks by the three applicable columns of \(\Psi_q\) and target blocks by the Cox row order. Using \(h^1(\cO(a))=\max\{-a-1,0\}\), for \(c\ge1\) in the nonzero-class case, source block sizes are \(c,c-1,c-1\); for \(c\ge2\) in the zero-class case they are \(c,c,c-2\).
For \(m\geq0\) and a degree-\(e\) form \[f=\sum_{j=0}^{e}f_j s^{e-j}t^j,\] multiplication by \(f\) induces
\[
  H^1(\cO(-m-1))\longrightarrow H^1(\cO(e-m-1)),
\]
whose matrix \(T_m(f)\) is of size \(\max(m-e,0)\times m\), with entries given by
\[
  \bigl(T_m(f)\bigr)_{\alpha\beta}=f_{\beta-\alpha},
  \qquad 0\leq\alpha<m-e,\quad 0\leq\beta<m,
\]
where we set \(f_j=0\) outside \(0\leq j\leq e\).
For \(m>e\), it is the shifted coefficient \((m-e)\times m\)-matrix
\[
  T_m(f)=
  \begin{bmatrix}
    f_0&f_1&\cdots&f_e&0&\cdots&0\\
    0&f_0&f_1&\cdots&f_e&\ddots&\vdots\\
    \vdots&\ddots&\ddots&\ddots&&\ddots&0\\
    0&\cdots&0&f_0&f_1&\cdots&f_e
  \end{bmatrix}.
\]
Each successive row shifts the coefficient vector one place to the right. This is the transpose of multiplication by \(f\) on the Serre-dual monomial bases.

The case conditions and scalar coefficients below are interpreted in the ground field \(k\); the twists and block dimensions use the integer degrees.
In all three cases, each \(u_i\)-block has \(\max(c-d,0)\) rows, each \(v_j\)-block has \(\max(c-d+k,0)\) rows, and the \(w\)-block has \(\max(c-k,0)\) rows; blocks with no rows are omitted.

For \(c\ge1\) and \(e\ge1\), each nonempty derivative block in the first two cases is obtained by shifting one of the coefficient rows
\[
\begin{aligned}
  \partial_sf&:\quad (ef_0,(e-1)f_1,\ldots,f_{e-1}),\\
  \partial_tf&:\quad (f_1,2f_2,\ldots,ef_e)
\end{aligned}
\]
and setting all remaining entries to zero.
For \(e=0\), both derivative blocks are \(c\times(c-1)\) zero matrices.

\begin{itemize}
\setlength{\labelwidth}{\dimexpr\leftmargin-\labelsep\relax}
\renewcommand{\makelabel}[1]{\upshape#1\hfil}

\item[\textbf{Case \(d\ne0\).}]
For \(c\ge1\), the columns \((-A_E,D_s,D_t)\) of Corollary~\ref{cor:two-term-resolution-blowup} give
\begin{equation}
    \label{eq:theta-computation}
  \left[\left.\Theta_{L_c}^1\right|_q\right]=
  \left[
  \begin{array}{c|c|c}
    0&T_{c-1}(\partial_su_0)&T_{c-1}(\partial_tu_0)\\
    \vdots&\vdots&\vdots\\
    0&T_{c-1}(\partial_su_s)&T_{c-1}(\partial_tu_s)\\ \hline
    T_c(v_{s+1})&T_{c-1}(\partial_sv_{s+1})&T_{c-1}(\partial_tv_{s+1})\\
    \vdots&\vdots&\vdots\\
    T_c(v_r)&T_{c-1}(\partial_sv_r)&T_{c-1}(\partial_tv_r)\\ \hline
    -T_c(w)&T_{c-1}(\partial_sw)&T_{c-1}(\partial_tw)
  \end{array}
  \right].
\end{equation}

\item[\textbf{Case \(d=0\) and \(k\ne0\).}]
For \(c\ge1\), use the columns \((A_H,D_s,D_t)\) of Corollary~\ref{cor:two-term-resolution-blowup}. The matrix is obtained from \eqref{eq:theta-computation} by replacing the first column block with blocks \(T_c(u_i)\), \(T_c(v_j)\), and \(0\) in the \(u_i\)-, \(v_j\)-, and \(w\)-rows, respectively.

\item[\textbf{Case \(d=k=0\).}]
For \(c\ge2\), the columns \((A_H,A_E,(H_\rho)_\rho)\)  of Corollary~\ref{cor:two-term-resolution-blowup}  give
\[
\left[\left.\Theta_{L_c}^1\right|_q\right]=
\begin{bmatrix}
T_c(u_i)&0&T_{c-2}(H_{u_i})\\
T_c(v_j)&-T_c(v_j)&T_{c-2}(H_{v_j})\\
0&T_c(w)&T_{c-2}(H_w)
\end{bmatrix}_{i,j}.
\]
For a coordinate \(f\) of degree \(e \geq 2\) such that \(e = 0\) in the field \(k\),
the factored derivative is
\[
H_f=-\sum_{j=0}^{e-2}(j+1)f_{j+1}s^{e-2-j}t^j.
\]
It satisfies \(\partial_sf=tH_f\) and \(\partial_tf=-sH_f\). For a constant coordinate or one with no sections, \(H_f=0\), and its block is the zero matrix with the prescribed target and source dimensions. Multiplication by \(H_f\) uses the same constant-diagonal matrix construction, with no division by \(e\).
\end{itemize}

\subsection{The quartic family: \(p\ne2\)}
\label{app:theta-q0-computation}

For the family \(q_\varepsilon\) in \eqref{eq:quartic-deformation-family}, let \(M_\varepsilon\) be the \(16\times16\) matrix obtained from \eqref{eq:theta-computation} with \(c=6\). Write its source coordinates as \((a_0,\ldots,a_5;b_0,\ldots,b_4;c_0,\ldots,c_4)\); the target blocks follow the order \((u_0,u_1,v_2,v_3,w)\). All calculations below hold over \(\mathbb Z[1/2][\varepsilon]\).

Let \(B\) consist of the fourteen rows belonging to \(u_0,v_2,v_3,w\). It is independent of \(\varepsilon\), and its minor obtained by deleting columns \(b_1,c_1\) has determinant \(256\). Thus \(\ker B\) is free of rank two with coordinates \(b_1,c_1\). Solving \(Bx=0\) and substituting into the two remaining \(u_1\)-rows gives
\[
\left.M_\varepsilon\right|_{\ker B,\,u_1}
=4\begin{pmatrix}0&4\varepsilon-1\\ \varepsilon&-2\end{pmatrix}.
\]
Moving the \(u_1\)-rows and the columns \(b_1,c_1\) to the end uses even permutations. The Schur complement therefore gives, in the original fixed bases,
\[
\det M_\varepsilon
=256\cdot16\varepsilon(1-4\varepsilon)
=4096\varepsilon(1-4\varepsilon).
\]
The remaining block has rank one at \(\varepsilon=0\), so \(\operatorname{rank}M_0=15\).

\subsection{The quartic family: \(p=2\)}
\label{app:quartic-char-two}

For the same family, form \(M_\varepsilon\) using the \(d=k=0\) block formula above with \(c=6\). The source coordinates are now \((a_0,\ldots,a_5;b_0,\ldots,b_5;c_0,\ldots,c_3)\), corresponding to \((A_H,A_E,(H_\rho)_\rho)\), and the target order is unchanged.

Delete the second row of the \(u_1\)-block, namely the fourth row of \(M_\varepsilon\), to obtain \(B'\). Its minor obtained by also deleting column \(a_1\) has determinant \(1\) over \(k[\varepsilon]\). Hence \(\ker B'\) is free of rank one with coordinate \(a_1\). Solving \(B'x=0\) gives \(a_3=0\), so the omitted row restricts to \(\varepsilon a_1+a_3=\varepsilon a_1\). Consequently, in the original fixed bases,
\[
\det M_\varepsilon=\varepsilon,
\qquad \operatorname{rank}M_0=15.
\]
In both characteristic cases, the determinant has a simple zero at \(\varepsilon=0\). These are the determinant and rank identities used in Proposition~\ref{prop:small-smooth-witness}.

\end{document}